\numberwithin{equation}{section}
\declaretheorem[
    name=Theorem,
	numberwithin=section,
	]{thm}
\declaretheorem[
    name=Proposition,
	numberwithin=section,
	]{prop}
\declaretheorem[
    name=Definition,
	style=definition,
	numberwithin=section,
	]{defin}
\declaretheorem[
	name=Lemma,
	numberwithin=section
	]{lem}
\declaretheorem[
	name=Corollary,
	numberwithin=section
	]{cor}
\declaretheorem[
	name=Remark,
	style=remark,
	numberwithin=section
	]{rem}
\newcommand*{\C}{\mathbb{C}}
\newcommand*{\Z}{\mathbb{Z}}
\newcommand*{\R}{\mathbb{R}}
\newcommand*{\B}{\mathbb{B}}
\newcommand*{\Y}{\mathbb{Y}}
\newcommand*{\Half}{\mathbb{H}}
\renewcommand*{\S}{\mathbb{S}}
\newcommand*{\pd}{\partial}
\newcommand*{\eps}{\varepsilon}
\renewcommand*{\phi}{\varphi}
\newcommand*{\mfk}[1]{\mathfrak{#1}}
\renewcommand{\Tilde}{\widetilde}
\DeclareMathOperator{\Real}{Re}
\DeclareMathOperator{\diag}{diag}
\DeclareMathOperator{\id}{id}
\DeclareMathOperator{\tr}{tr}
\newcommand{\Vector}[2]{\begin{pmatrix}#1\\#2\end{pmatrix}}
\title{Mode stability of self-similar wave maps without symmetry in higher dimensions}
\subjclass[2020]{Primary 35B35; Secondary 17B15, 35B44, 35C06, 35L71}
\author{Roland Donninger}
\email{roland.donninger@univie.ac.at}
\address{Universit\"at Wien, Fakult\"at f\"ur Mathematik,
  Oskar-Morgenstern-Platz 1, 1090 Vienna, Austria}
\thanks{This research was funded in whole or in part by the
Austrian Science Fund (FWF) 10.55776/P34560,
10.55776/PIN2161424, and 10.55776/PAT9429324. For open access purposes, the authors have
applied a CC BY public copyright license to any author-accepted manuscript version arising from this submission.}
\author{Frederick Moscatelli}
\email{frederick.moscatelli@univie.ac.at}
\address{Universit\"at Wien, Fakult\"at f\"ur Mathematik,
  Oskar-Morgenstern-Platz 1, 1090 Vienna, Austria}
\begin{document}

\begin{abstract}
We consider wave maps from $(1+d)$-dimensional Minkowski space into the $d$-sphere. For every $d \geq 3$, there exists an explicit self-similar solution that exhibits finite time blowup. This solution is corotational and its mode stability in the class of corotational functions is known. Recently, Weissenbacher, Koch, and the first author proved mode stability without symmetry assumptions in $d =3$. In this paper we extend this result to all $d \geq 4$. On a technical level, this is the first successful implementation of the quasi-solution method where two additional parameters are present.

\end{abstract}

\maketitle

\section{Introduction}
\noindent In this paper, we consider wave maps from $(1+d)$-dimensional Minkowski space $\R^{1,d} = \R \times \R^d$ to the $d$-dimensional sphere $\S^d \subseteq \R^{d+1}$. The (extrinsic) wave maps equation for a function $U : \R^{1,d} \to \S^d$ is given by
\begin{align}\label{ex wm eq}
    \pd^\mu \pd_\mu U + \left(\pd^\mu U \cdot \pd_\mu U\right) U =0,
\end{align}
where $\cdot $ denotes the Euclidean inner product and Einstein's summation convention is in effect\footnote{The derivative with respect to the $\mu$-th slot is denoted by $\pd_\mu$ with the conventions $\pd^0 := - \pd_0$ and $\pd^j := \pd_j$ for $j \in \{1,\ldots,d\}$. Greek indices as $\mu$ run from $0$ to $d$ and latin letters as $i,j,k$ run from $1 $ to $d$.}. Eq.~\eqref{ex wm eq} is the simplest instance of a geometric wave equation and has the (formally) conserved energy
\begin{align*}
    E[U](t)= \frac{1}{2} \int_{\R^d} |\pd_0 U(t,x)|^2 + \pd^j U(t,x) \cdot \pd_j U(t,x) dx.
\end{align*}
Furthermore, Eq.~\eqref{ex wm eq} is scaling invariant, i.e., if $U$ solves Eq.~\eqref{ex wm eq}, so does $U_a(t,x) := U\left( \frac{t}{a},\frac{x}{a} \right)$ for $a > 0$. The energy scales as
\begin{align*}
    E[U_a](t) = a^{d-2} E[U]\left( \frac{t}{a} \right),
\end{align*}
which makes Eq.~\eqref{ex wm eq} energy supercritical for $d \geq 3$. In these dimensions we consider an explicit self-similar solution $U_*$ of Eq.~\eqref{ex wm eq},
\begin{align*}
    U_\ast(t,x) = F_\ast\left(  \frac{x}{1 -t} \right),
\end{align*}
where 
\begin{align*}
    F_\ast(\xi) := \frac{1}{d-2 + |\xi|^2} \Vector{2 \sqrt{d-2}\, \xi}{ d- 2 - |\xi|^2}.
\end{align*}
Observe that $U_\ast$ suffers a gradient blowup as $t \nearrow 1$, even though its initial data $(U_\ast(0,\cdot), \pd_0 U_\ast(0,\cdot) )$ are perfectly smooth. The existence of such a solution was first proved in \cite{Sha88} and then found in closed form in \cite{SpeTur90} for $d =3$ and in \cite{BizBie15} for $d \geq 4$.

A natural question is whether this solution is stable under perturbations of the initial data. This was studied extensively under the assumption of corotational symmetry. However, without symmetry assumptions little is known about the stability of $U_\ast$. The first step in a rigorous stability analysis is a proof of \emph{mode stability}. This was achieved recently for $d=3$ in \cite{WeiKocDon25}. In the present paper we establish mode stability of $U_\ast$ for all remaining dimensions $d \geq 4$. Based on this, we prove the full nonlinear asymptotic stability in the companion paper \cite{DonMos26}.

Proving mode stability without symmetry assumptions is challenging because the spectral equation is a system of partial differential equations (PDEs) rather than a single ordinary differential equation (ODE) as is the case in the corotational setting. The strategy of the proof is quickly summarized as follows.
\begin{itemize}
\item Since the blowup solution we perturb around is corotational, it is possible to reduce the spectral PDE to a system of coupled ODEs by expanding in spherical harmonics. This is therefore the first reduction step we perform.
\item Our goal is to apply the quasi-solution method that was key in the proof of mode stability in the corotational case \cite{CosDonGloHua16, CosDonGlo17, Don24}. This method can only be applied to single ODEs and thus, the next and crucial step is to decouple the system of spectral ODEs. For this we employ representation theory of Lie algebras. As was already observed in \cite{WeiKocDon25}, there is a fundamental relation of the coupling operator to a Casimir operator of a certain irreducible representation of the Lie algebra $\mathfrak{so}(d)$. By exploiting this, we achieve the desired decoupling. The fact that we work in general dimensions $d\geq 3$ requires a novel systematic treatment of the decoupling step that goes beyond \cite{WeiKocDon25} where a more straightforward connection to the Clebsch-Gordan problem in quantum mechanics was exploited.
\item With the decoupled equations at hand, we apply the quasi-solution method in order to show the absence of unstable mode solutions. Again, this is significantly more challenging than in \cite{WeiKocDon25} because of the additional parameter $d$.
\end{itemize}

\subsection{Related results}
Geometric wave equations such as the wave maps equation generated a lot of interest in the PDE community in the last few decades. Accordingly, the amount of related literature is vast and impossible to review in its entirety. Thus, we will mostly restrict ourselves to results on blowup for wave maps with spherical targets.

As already mentioned, the stability of $U_\ast$ was studied extensively in corotational symmetry, i.e., for maps $U$ of the form
\begin{align*}
    U(t,x) = \Vector{\sin(u(t,|x|)) \frac{x}{|x|}}{\cos(u(t,|x|))}
\end{align*}
for a scalar function $u$.
In this case, $U$ solves Eq.~\eqref{ex wm eq} if and only if $u$ solves 
\begin{align}\label{corot wm eq}
    \left(\pd_t^2 - \pd_r^2 - \frac{d-1}{r}\pd_r\right)u(t,r) + \frac{(d-1)\sin(u(t,r))\cos(u(t,r))}{r^2} = 0.
\end{align}
Note that $U_\ast$ is corotational with corotational profile 
\begin{align*}
    u_\ast(t,r) = 2 \arctan\left(  \frac{r}{\sqrt{d-2}(1-t)} \right).
\end{align*}
Passing from Eq.~\eqref{ex wm eq}, a system of coupled equations with a nonlinearity that depends on the derivatives of $U$, to Eq.~\eqref{corot wm eq}, a scalar equation with no derivatives in its nonlinearity, is a tremendous simplification.

The nonlinear stability of $u_\ast$ in the class of corotational solutions in backwards lightcones was established for all dimensions $d \geq 3$ in the series of works \cite{Don11,DonSchAic12,CosDonXia16,CosDonGlo17,ChaDonGlo17,DonOst24}. Further stability properties were analyzed such as global-in-space stability \cite{Glo25}, stability in larger space-time regions \cite{DonOst24} and stability in the optimal Sobolev space \cite{DonWal23,DonWal25}, to mention a few.
These stability results all rely crucially on the mode stability of $u_\ast$. Proving the latter consists of solving a spectral problem for a nonself-adjoint Sturm-Liouville operator where standard techniques from mathematical physics fail. Even though numerics heavily suggested that $u_\ast$ is mode stable, see \cite{BizChmTab00,Biz05}, proving this rigorously remained an open problem for quite some time. In fact, already in 2012, the papers \cite{Don11,DonSchAic12} showed that mode stability implies nonlinear stability in $d =3$ but only later, in 2016, mode stability was proved for $d =3$ in \cite{CosDonXia16}.
The result in \cite{CosDonXia16} is based on a sort of brute-force approach and a more systematic method was necessary in order to handle general dimensions $d\geq 3$. 
This was achieved in \cite{CosDonGlo17} with a method developed in Irfan Glogi\'c's PhD thesis \cite{Glo18}. We refer to this method as the \emph{quasi-solution method}. By now, the quasi-solution method was successfully implemented to prove mode stability results also for other equations such as the wave equation with power nonlinearities \cite{GloSch21,CsoGloSch24} and the Yang-Mills equation \cite{CosDonGloHua16}. The proof of mode stability of $U_\ast$ for $d=3$ outside of corotational symmetry in \cite{WeiKocDon25} also relies on this method, combined with novel insight to decouple the equations, see below.

Well before the rigorous proof of stability of $u_\ast$, the paper \cite{BizChmTab00} gave strong numerical evidence of the stability of $u_\ast$ in $d =3$. Moreover, it was observed that whenever a generic solution $u$ of Eq.~\eqref{corot wm eq} experiences finite time blowup, $u$ converges to (a time shifted version of) $u_\ast$. Analogous numerical evidence in higher dimensions $d \geq 4$ is given in \cite{BizBie15}. As a consequence, it is conjectured that $u_\ast$ describes the generic blowup profile of Eq.~\eqref{corot wm eq}. In particular, the blowup seems generically self-similar. On the other hand, in dimensions $d\geq 7$, the existence of another type of blowup (which is not self-similar) is known \cite{GhoIbrNgu18}.

We remark in passing that in the critical case $d =2$ blowup also occurs, albeit with a completely different mechanism. In this critical dimension a result in \cite{Str03} shows that if finite time blowup occurs for a solution $u$ of Eq.~\eqref{corot wm eq} then $u$ must converge, after rescaling, to a non-constant corotational harmonic map. In this case, blowup occurs due to energy concentration at the origin at a rate that is faster than that of self-similar solutions. This in particular excludes self-similar blowup. Numerical evidence for blowup was given in the influential paper \cite{BizChmTab01}. The first rigorous constructions of blowup appeared in \cite{KriSchTat08, RodSte10}. In \cite{RapRod12} blowup solutions with precise asymptotic rates were constructed for each equivariance class and the stability of these solutions in their respective equivariance class was also established. For sufficiently small blowup rates the stability of the solutions constructed in \cite{KriSchTat08} was proved in \cite{KriMia20} in corotational symmetry and remarkably, a stability theory without symmetry assumptions appeared recently in \cite{KriMiaSch24}.

\subsection{Intrinsic formulation}
We will consider the wave maps equation in a chart. For a chart $\Psi: \S^d \supseteq W \to V \subseteq \R^d $, Eq.~\eqref{ex wm eq} transforms into 
\begin{align}\label{int wm eq}
    \pd^\mu \pd_\mu (\Psi \circ U)^k + \Gamma_{i j}^k(\Psi \circ U) \pd^\mu (\Psi \circ U)^i \pd_\mu(\Psi \circ U)^j = 0,\quad k=1,\ldots,d,
\end{align}
where $\Gamma^k_{ij}$ are the associated Christoffel symbols.
We will take $\Psi$ to be the stereographic projection with respect to the south pole. Concretely (for $y = (\Tilde{y},y^{d+1})$)
\begin{align*}
    \Psi(\Tilde{y},y^{d+1}) &= \frac{1}{1 + y^{d+1}} \Tilde{y}\\
    \Psi^{-1}(z) &= \frac{1}{1 + |z|^2} \Vector{2z}{1 - |z|^2},
\end{align*}
which maps $\S^d \setminus \{(0,\ldots,0,-1)\}$ bijectively to $\R^d$.
This choice of a chart is very convenient for us since $F_\ast$ is, up to a scaling factor of $\sqrt{d-2}$, the same as $\Psi^{-1}$.
\subsection{Similarity coordinates}
Since $U_\ast$ is self-similar, it is standard to make the change of variables 
\begin{align*}
    (\tau,\xi) = \chi(t,x) = \left(-\log(1-t) ,\frac{x}{1-t}\right)
\end{align*}
with inverse transformation
\begin{align*}
    (t,x) = \left(1 -  e^{-\tau},  e^{-\tau} \xi\right).
\end{align*}
This transformation maps the truncated backward lightcone with vertex $(1,0)$ to an infinite cylinder with the blowup point shifted to infinity, i.e.,
\begin{align*}
    \chi : \{(t,x) \in [0,1)\times \R^d: |x| \leq 1-t\} \to [0,\infty) \times \overline{\B^d}.
\end{align*}
Then, setting  
\begin{align*}
    v(\tau,\xi) &= (\Psi \circ U)(1 - e^{-\tau}, e^{-\tau}\xi),
\end{align*}
transforms Eq.~\eqref{int wm eq} into
\begin{align}
    0 &= \Big[\partial_\tau^2 + \partial_\tau + 2\xi^j \partial_{\xi^j}\partial_\tau - (\delta^{i j} - \xi^i \xi^j)\pd_{\xi^i}\pd_{\xi^j} + 2 \xi^j \pd_{\xi^j} \Big]v^k(\tau,\xi)\label{int wm eq selfsim}\\
    &+\Gamma_{i j}^k(v(\tau,\xi))\Big[(\partial_\tau + \xi^n\pd_{\xi^n})v^i(\tau,\xi) (\partial_\tau + \xi^n \pd_{\xi^n})v^j(\tau,\xi) - \pd^n v^i(\tau,\xi)\pd_n v^j(\tau,\xi) \Big]   ,\quad k=1,\ldots,d. \notag
\end{align}
Now $v_\ast(\tau,\xi) := v_\ast(\xi) := (\Psi \circ U_\ast)(1- e^{-\tau},  e^{-\tau} \xi)$ is a solution of Eq.~\eqref{int wm eq selfsim} that does not depend on the new time variable $\tau$. In fact, due to our choice of $\Psi$, we simply have
\begin{align*}
    v_\ast(\xi) = \frac{1}{\sqrt{d-2}}\xi.
\end{align*}

\section{Mode stability}
\subsection{Linearization around the solution}
In order to analyze the stability of $v_\ast$, we linearize Eq.~\eqref{int wm eq selfsim} around $v_\ast$. That is to say, we write
\begin{align*}
    v(\tau,\xi) = v_\ast(\xi) + w(\tau,\xi),
\end{align*}
insert this ansatz into Eq.~\eqref{int wm eq selfsim}, and discard all terms of order at least $O(|w|^2)$. This yields
\begin{align*}
    0 &= \Big[\pd_\tau^2 + \pd_\tau + 2 \xi^j\pd_{\xi^j}\pd_\tau - (\delta^{i j} - \xi^i \xi^j)\pd_{\xi^i}\pd_{\xi^j} + 2 \xi^j\pd_{\xi^j}\Big]w^k(\tau,\xi)\\
    &-\frac{4}{d-2 + |\xi|^2}\Big[|\xi|^2 \pd_\tau w^k(\tau,\xi) - (1 - |\xi|^2)\xi^j \pd_{\xi^j} w^k(\tau,\xi) + \xi^k \pd_j w^j(\tau,\xi) - \xi_j \pd^k w^j(\tau,\xi)\Big]\\
    &-\frac{2(d-2- |\xi|^2)}{d-2 + |\xi|^2}w^k(\tau,\xi), \quad k=1,\ldots,d.
\end{align*}
The following transformation is useful (and brings the equation into the same form as in \cite{CosDonGlo17})
\begin{align*}
    w(\tau,\xi) = (d-2+  |\xi|^2) \phi(\tau,\xi),
\end{align*}
which gives the equations
\begin{align}
    0 &= \Big[\partial_\tau^2 + \partial_\tau + 2\xi^j \partial_{\xi^j}\partial_\tau - (\delta^{i j} - \xi^i \xi^j)\pd_{\xi^i}\pd_{\xi^j} + 2 \xi^j \pd_{\xi^j} \Big]\phi^k(\tau,\xi)\label{lin eq}\\
    &- \frac{4(d-1)(d-2 - |\xi|^2)}{(d-2 + |\xi|^2)^2}\phi^k(\tau,\xi) - \frac{4}{d-2  +|\xi|^2}(\xi^k \pd_{\xi^j}  - \xi_j \pd_{\xi_k} )\phi^j(\tau,\xi), \quad k=1,\ldots,d \notag.
\end{align}

\subsection{Mode solutions}
Now we look for \emph{mode solutions} of Eq.~\eqref{lin eq}. These have the form
\begin{align}\label{mode ansatz}
    \phi(\tau,\xi) = e^{\lambda \tau} f(\xi)
\end{align}
for some $\lambda \in \C$ and $f \in C^\infty(\overline{\B^d},\C^d)$. Assuming that $f$ is smooth is not a real restriction. This technically depends on the space one is working in, but since the stability analysis will be carried out in some Sobolev space with sufficiently high regularity, one can show with an elliptic regularity argument that solutions of Eq.~\eqref{lin eq} in the sense of distributions that have a certain regularity must already be smooth. See the discussion in \cite{Don24} for more details.

Inserting \eqref{mode ansatz} into Eq.~\eqref{lin eq} yields the equations
\begin{align}
    0 &= \left[ (\delta^{i j} - \xi^i \xi^j)\pd_{\xi^i }\pd_{\xi^j}  - 2 (\lambda + 1 )\xi^j \pd_{\xi^j} - \lambda(\lambda + 1) + \frac{4(d-1)(d-2 - |\xi|^2)}{(d-2 + |\xi|^2)^2} \right]f^k(\xi)\label{full mode eq}\\
    &+ \frac{4}{d-2 + |\xi|^2} (\xi^k \pd_{\xi^j} - \xi_j \pd_{\xi_k}) f^j(\xi), \quad k = 1 ,\ldots,d\notag.
\end{align}
Heuristically, one expects instability of $v_\ast$ if there are mode solutions with $\Real \lambda > 0$. Our goal will be to rule out such solutions. However, due to the continuous symmetries of Eq.~\eqref{ex wm eq}, there actually exist mode solutions with $\Real \lambda \geq 0$. If $U$ solves \eqref{ex wm eq} then so do the following functions.
\begin{itemize}
    \item \textbf{Translations in time and space:} $U_{(T,X)}(t,x) = U(t-T,x-X )$ for $(T,X) \in \R^{1,d}$.
    \item \textbf{Scaling:} $U_a(t,x) = U\left( \frac{t}{a},\frac{x}{a}  \right)$ for $a > 0$.
    \item \textbf{Rotations on the domain:} $U_R(t,x) = U(t,R x)$ for $R \in SO(d)$.
    \item \textbf{Lorentz boosts:} $U_{L_j(a)}(t,x) = U(L_j(a)(t,x))$ for $j \in \{1 , \ldots,d\}$ and $a \in \R$, where $L_j(a)$ denotes the Lorentz boost of rapidity $a$ in direction $e_j$.
    \item \textbf{Rotations on the target:} $U^R(t,x) = R U(t,x)$ for $R \in SO(d+1)$.
\end{itemize}
In principle, each of these symmetries generates a mode solution. However, there are some redundancies.
\begin{itemize}
    \item Scaling a self-similar solution is essentially the same as translating in time
    \begin{align*}
        (U_\ast)_a(t,x) = U_\ast\left( \frac{t}{a},\frac{x}{a} \right) = F_\ast\left( \frac{\frac{x}{a}}{1 - \frac{t}{a}} \right) = F_\ast\left( \frac{x}{a - t} \right) = U_\ast(t + 1-a,x).
    \end{align*}
    \item Assume that $R$ is a rotation on $\R^{d+1}$ that leaves the last component invariant, i.e.,
    \begin{align*}
        R \Vector{\Tilde{y}}{y^{d+1}} = \Vector{\Tilde{R} \Tilde{y}}{y^{d+1}}
    \end{align*}
    for some rotation $\Tilde{R}$ on $\R^d$. Then one checks that 
    \begin{align*}
        U^R_\ast(t,x)=R U_\ast(t,x) = R F_\ast\left(\frac{x}{1-t}\right) = F_\ast \left(\Tilde{R} \frac{x}{1-t}\right) = U_\ast(t,\Tilde{R} x).
    \end{align*}
\end{itemize}
Counting the dimensions of the corresponding symmetry groups, we expect 
\begin{align*}
    d+1 + \binom{d}{2} + d  + d = 3d+ 1 + \binom{d}{2}
\end{align*}
many linearly independent symmetry mode solutions. Indeed, these are explicitly given (without the weight $\frac{1}{d- 2 + |\xi|^2}$) by
\begin{align*}
    e^\tau  \xi&\\
    e^\tau e_j&, \quad 1 \leq j \leq d\\
    \xi_j e_k - \xi_k e_j&, \quad 1 \leq j < k \leq d\\
    |\xi|^2 e_j - d\xi_j \xi &,\quad 1 \leq j \leq d\\
    (d-  |\xi|^2) e_j&,\quad 1 \leq j \leq d,
\end{align*}
where $e_j$ denotes the $j$-th unit vector in $\C^d$. These ``instabilities'' reflect the fact that one cannot expect the single solution $v_\ast$ to be stable but one has to consider the whole family of solutions that is obtained by applying the above symmetries to $v_\ast$. In this sense, the above mode solutions, henceforth called \emph{symmetry modes}, are ``artificial'' and do not indicate the existence of ``real'' instabilities. This observation motivates the following definition.
\begin{defin}
We say that $U_\ast$ is \emph{mode stable} if the only nontrivial $f \in C^\infty(\overline{\B^d},\C^d)$ that solve Eq.~\eqref{full mode eq} with $\Real \lambda \geq 0$ are linear combinations of the symmetry modes.
\end{defin}
With these preparations we can now state the main result of the present paper.
\begin{thm}\label{mode stability}
The solution $U_\ast$ is mode stable.
\end{thm}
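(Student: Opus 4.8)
The plan is to follow the three-step strategy announced in the introduction, carried out in order. \textbf{Step 1: spherical harmonic decomposition.} Because the background $v_\ast(\xi)=\frac{1}{\sqrt{d-2}}\xi$ is corotational, the mode equation~\eqref{full mode eq} commutes with the $SO(d)$-action on the domain. I would therefore expand $f$ in vector spherical harmonics on $\S^{d-1}$. Writing $\xi=\rho\omega$ with $\rho\in[0,1]$ and $\omega\in\S^{d-1}$, each harmonic sector of fixed degree $\ell$ decomposes the $\C^d$-valued unknown into an irreducible piece plus couplings coming from the two first-order terms $\xi^k\pd_{\xi^j}f^j-\xi_j\pd^k f^j$; the vector-harmonic content of degree $\ell$ consists of the longitudinal (gradient of scalar harmonic of degree $\ell$), the ``lower'' radial-type pieces of degrees $\ell\pm 1$, and the divergence-free transverse harmonics. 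Projecting~\eqref{full mode eq} onto these subspaces turns the PDE system into a finite coupled system of second-order ODEs in $\rho$ with the usual regular singular points at $\rho=0$ and $\rho=1$, with the parameter $\lambda$ entering through $-\lambda(\lambda+1)$ and $-2(\lambda+1)\rho\pd_\rho$.

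\textbf{Step 2: decoupling via representation theory.} The coupling operator $(\xi^k\pd_{\xi^j}-\xi_j\pd^k)$ is, as noted in~\cite{WeiKocDon25}, essentially the action of $\mathfrak{so}(d)$ on the standard representation, and on each fixed-$\ell$ block it becomes a finite matrix built from a Casimir operator. I would diagonalize this block: decompose the tensor product (standard rep of $\mathfrak{so}(d)$) $\otimes$ (degree-$\ell$ harmonics) into irreducibles, whose Casimir eigenvalues give the eigenvalues of the coupling matrix. This yields a basis in which the second-order system becomes \emph{diagonal}, i.e.\ a finite list of decoupled scalar ODEs of the form $L_{\ell,j,d}f=\lambda(\lambda+1)f$ where $L_{\ell,j,d}$ is a Sturm–Liouville-type operator on $(0,1)$ depending on $\ell$, the irreducible component index $j$, and $d$; the potential term $\frac{4(d-1)(d-2-|\xi|^2)}{(d-2+|\xi|^2)^2}$ is unaffected since it is scalar. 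The transverse (divergence-free) sector decouples trivially because the coupling annihilates it. The output of this step is a finite family of single second-order ODEs, each of which must be shown to have no solution that is analytic at both $\rho=0$ and $\rho=1$ for $\Real\lambda\ge 0$, except for the known symmetry modes which live in the low-$\ell$ sectors ($\ell=0,1$ and their neighbors).

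\textbf{Step 3: the quasi-solution method.} For each decoupled ODE I would analyze the Frobenius indices at $\rho=0$ and $\rho=1$; smoothness on $\overline{\B^d}$ forces selection of the regular indicial root at each endpoint, so mode stability reduces to showing the corresponding connection coefficient is nonzero for $\Real\lambda\ge0$. Following~\cite{CosDonGlo17,Glo18,Don24}, I would construct an explicit rational (or algebraic) \emph{quasi-solution} $f_0$ solving the ODE up to a small, sign-definite remainder, write the true solution as $f=f_0(1+h)$, derive a Volterra integral equation for $h$, and bound its solution via a carefully chosen weight so that $f$ inherits the non-vanishing and positivity properties of $f_0$ near the relevant endpoint — ruling out the matching needed for a genuine eigenvalue. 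The symmetry modes are accounted for by checking that they correspond exactly to the finitely many exceptional $(\ell,j,\lambda)$ where the quasi-solution argument is allowed to fail (e.g.\ $\lambda=0,1$).

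\textbf{Main obstacle.} The hardest part is Step 3 in the presence of the \emph{two} extra parameters $\ell$ and $d$ (beyond the corotational case which had only one): the quasi-solution ansatz, the weight functions, and especially the positivity/monotonicity estimates on the Volterra remainder must be made uniform in $\ell\ge 0$ and $d\ge 4$ simultaneously. One typically handles large $\ell$ by a separate asymptotic argument (the potential and coupling terms become negligible against the $\ell(\ell+d-2)/\rho^2$ centrifugal term, forcing the solution to behave like a Bessel-type function with no sign changes) and small $\ell$ by finitely many explicit case checks; threading a single quasi-solution family through the intermediate range of $\ell$ while keeping all $d\ge 4$ under control is the genuinely new technical difficulty, and is presumably where the bulk of the paper's computations lie.
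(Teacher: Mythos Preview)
Your Steps~1 and~2 match the paper's architecture closely: the spherical-harmonic reduction and the diagonalization of the coupling operator via the Casimir of $\mathfrak{so}(d)$ are exactly what is done, yielding decoupled scalar ODEs indexed by $(\ell,m)$ with $m\in\{-\ell,1,\ell+d-2\}$.

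Your Step~3, however, misidentifies what the quasi-solution method actually is in this context. You describe constructing an approximate solution $f_0$ of the ODE, writing $f=f_0(1+h)$, and controlling $h$ through a Volterra integral equation with weights and positivity estimates. That is \emph{not} the method of \cite{CosDonGlo17,Glo18,Don24} or of this paper. Instead, after (i) removing the symmetry modes by a supersymmetric factorization and (ii) bringing each ODE to standard Heun form in $x=\rho^2$, one expands the solution regular at $x=0$ as a power series $\sum a_n x^n$ and studies the \emph{ratio} $r_n(\lambda)=a_{n+1}/a_n$, which satisfies a first-order nonlinear recursion $r_{n+1}=A_n+B_n/r_n$. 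By Poincar\'e's theorem on difference equations, $r_n\to 1$ or $r_n\to -\tfrac{1}{d-2}$; the latter would give a solution smooth on $[0,1]$, the former not. The ``quasi-solution'' is an explicit rational approximation $\tilde r_n(\lambda)$ to $r_n(\lambda)$ (not to $f$), chosen so that the relative error $\delta_n=r_n/\tilde r_n-1$ obeys a recursion $\delta_{n+1}=\eps_n-C_n\,\delta_n/(1+\delta_n)$ with $|\eps_n|,|C_n|$ explicitly bounded. One proves $|\delta_n|\le\alpha<1$ on the imaginary axis by reducing to positivity of explicit polynomials, extends to $\overline{\Half}$ by Phragm\'en--Lindel\"of, and concludes $r_n\to 1$. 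The $\ell=0$ case collapses to a hypergeometric equation and is handled directly.

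The technical novelty you correctly anticipate---handling the two parameters $d$ and $\ell$ simultaneously---manifests not as a Volterra remainder estimate but as the discovery of a single quasi-solution ansatz $\tilde r_n=A_{n-1}+\tfrac{1}{d-2}-\tfrac{5}{10n+3d+10}$ for $d\ge 6$, $\ell\ge 3$, together with constant bounds $(\alpha,\beta,\gamma)=(\tfrac12,\tfrac13,\tfrac16)$, with low $d$ or $\ell$ treated by separate explicit choices of $\tilde r_n$.
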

\subsection{Decoupling of the equations}
A quick inspection of Eq.~\eqref{full mode eq} reveals that the $d$ equations are coupled due to the last term. Ultimately, our goal is to employ a similar approach as in \cite{GloSch21,CsoGloSch24}, namely decomposing into spherical harmonics. In order to do this, we first analyze the term responsible for the coupling. We define
\begin{align*}
    (K_{k j})_\xi := \xi_k \pd_{\xi^j} - \xi_j \pd_{\xi^k}
\end{align*}
which acts on functions with values in $\C^d$. Note that $K$ is an angular differential operator in the sense that it maps radial functions to 0. Consider for $\ell \geq 0$ the following space (commonly referred to as the \emph{$\ell$-th spherical harmonics})
\begin{align*}
    \Y_\ell := \{ p \in \C[\xi] : \Delta p = 0  \text{ and }p\text{ is homogeneous of degree }\ell\}
\end{align*}
and denote by $\Y_\ell^d$ the space of $\C^d$-valued functions where each component is in $\Y_\ell$.

Now recall that if one considers the restriction of the polynomials to $\S^{d-1}$, one has the orthogonal decomposition 
\begin{align*}
    L^2(\S^{d-1}) = \bigoplus_{\ell \geq 0} \mathbb{Y}_\ell,
\end{align*}
see e.g.~\cite{AtkHan12}, where the inner product is given by
\begin{align*}
    (f|g)_{L^2(\S^{d-1})} = \int_{\S^{d-1}} f(\omega) \overline{g(\omega)}d \sigma(\omega),
\end{align*}
with $\sigma$ the surface measure on $\S^{d-1}$.

One checks that $K$ leaves the spaces $\Y_\ell^d$ invariant and one can use methods from the theory of Lie algebras to show that $K$ is diagonalizable on this space with eigenvalues
\begin{align*}
    -\ell,\quad 1,\quad \ell+d-2
\end{align*}
for $\ell \geq 1$, see \autoref{Decoupling of the equations} for details. The case $\ell=0$ is trivial since $K$ maps constant functions to 0. We denote the eigenspace of $K|_{\Y_\ell^d}$ corresponding to the eigenvalue $m$ by $W_{\ell,m}$. This yields the decomposition
\begin{align}\label{decomp of vector L2}
    L^2(\S^{d-1},\C^d) = \Y_0^d \oplus \bigoplus_{\ell \geq 1} \bigoplus_{m \in \{-\ell,1,\ell+d-2\}} W_{\ell,m}.
\end{align}
Note that each entry $K_{k j}$ of $K$ is skew-symmetric on $\Y_\ell$ with respect to $L^2(\S^{d-1})$. Since $K_{k j} = - K_{j k}$, one readily verifies that this implies that $K$ is symmetric on $\Y_\ell^d$ with respect to $L^2(\S^{d-1},\C^d)$, where the inner product is given by
\begin{align*}
    (f|g)_{L^2(\S^{d-1})} = \int_{\S^{d-1}} f(\omega) \cdot \overline{g(\omega)}d \sigma(\omega) = \int_{\S^{d-1}} f^j(\omega) \overline{g_j(\omega)}d \sigma(\omega).
\end{align*}
Hence the $W_{\ell,m}$ for $m \in \{-\ell,1,\ell+d-2\}$ are mutually orthogonal and, since $\Y_\ell^d \perp \Y_{\ell'}^d$ for $\ell \not=\ell'$, the decomposition \eqref{decomp of vector L2} is orthogonal. 
Now we choose $\{Y_{0,\alpha}\}_\alpha$ an orthonormal basis of $\Y_0^d$ and $\{Y_{\ell,m,\alpha}\}_\alpha$ an orthonormal basis of $W_{\ell,m}$. Then for $f \in C^\infty(\overline{\B^d},\C^d)$ we can decompose its ``angular part''
\begin{align}\label{sph harm decomp}
    f(\rho \cdot ) = \sum_{\alpha} f_{0,\alpha}(\rho) Y_{0,\alpha} + \sum_{\ell \geq 1} \sum_{m \in \{-\ell,1,\ell+d-2\}} \sum_{\alpha} f_{\ell,m,\alpha}(\rho) Y_{\ell,m,\alpha}.
\end{align}
The functions $f_{\ell,m,\alpha}$ are just coefficients of $L^2(\S^{d-1},\C^d)$-projections
\begin{align*}
    f_{\ell,m,\alpha}(\rho) = (f(\rho\cdot )|Y_{\ell,m,\alpha})_{L^2(\S^{d-1},\C^d)} = \int_{\S^{d-1}} f(\rho \omega)\cdot \overline{Y_{\ell,m,\alpha}(\omega)} d \sigma(\omega),
\end{align*}
analogously for $f_{0,\alpha}$.
One verifies with this that $f_{0,\alpha},f_{\ell,m,\alpha}:[0,1] \to \C$ are smooth.

Now recall that $\Y_\ell$ (and hence also $\Y_\ell^d$) is the eigenspace of the Laplace-Beltrami operator $\Delta^{\S^{d-1}}$ corresponding to the eigenvalue $- \ell(\ell+d-2)$. Combining this with the fact that $Y_{0,\alpha},Y_{\ell,m,\alpha}$ are eigenfunctions of $K$, we can insert \eqref{sph harm decomp} into Eq.~\eqref{full mode eq} to obtain the equations
\begin{align}\label{mode ode zero}
    f_{0,\alpha}''(\rho) + \frac{d-1 - 2 (\lambda + 1)\rho^2}{\rho(1 - \rho^2)}f_{0,\alpha}'(\rho) + \left[-\frac{\lambda(\lambda + 1)}{1 - \rho^2} + V_0(\rho)\right]f_{0,\alpha}(\rho) = 0
\end{align}
and
\begin{align}\label{mode ode l m}
    f_{\ell,m,\alpha}''(\rho) + \frac{d-1 - 2 (\lambda + 1)\rho^2}{\rho(1 - \rho^2)}f_{\ell,m,\alpha}'(\rho) + \left[-\frac{\lambda(\lambda + 1)}{1 - \rho^2} + V_{\ell,m}(\rho)\right]f_{\ell,m,\alpha}(\rho) = 0
\end{align}
where
\begin{align*}
    V_0(\rho) = \frac{4(d-1)(d-2 - \rho^2)}{(1 - \rho^2)(d- 2 + \rho^2)^2}
\end{align*}
and
\begin{align*}
    V_{\ell,m}(\rho) &= \frac{-(d-2)^2 \ell(\ell+d-2) + 2 (d-2)[-2 +2d + 2m - \ell(\ell+d-2)]\rho^2 }{\rho^2(1 - \rho^2)(d-2 + \rho^2)^2}\\
    &+\frac{[4 -4d + 4m - \ell(\ell+d-2)]\rho^4}{\rho^2(1 - \rho^2)(d-2 + \rho^2)^2}.
\end{align*}
Since $\alpha$ does not appear in the equation, we omit it from now on.

The symmetry modes now give rise to the smooth solutions (which we will again call symmetry modes)
\begin{align*}
    \begin{cases}
        f_{0}^0(\rho) = \frac{d-\rho^2}{d-2 + \rho^2},& \ell=0,\lambda=0\\
        f_0^1(\rho) = \frac{1}{d-2 + \rho^2},& \ell=0,\lambda=1\\
        f_{1,1+d-2}^1(\rho) = \frac{\rho}{d-2 + \rho^2},& \ell=1,m=1+d-2,\lambda =1\\
        f_{1,1}^0(\rho)=\frac{\rho}{d-2 + \rho^2},& \ell=1,m=1,\lambda=0\\
        f_{2,2+d-2}^0(\rho) =\frac{ \rho^2}{d-2 + \rho^2},& \ell=2,m=2+d-2,\lambda =0
    \end{cases}.
\end{align*}
The corotational case is $\ell=1$ and $m = 1 +d -2 = d-1$ in this notation.
Our goal now is to prove that these are in fact the only nontrivial smooth solutions of Eqs.~\eqref{mode ode zero} and \eqref{mode ode l m}.

It should be noted that excluding smooth solutions of equations such as \eqref{mode ode zero} and \eqref{mode ode l m} constitutes a hard spectral problem, since this is highly nonself-adjoint. This is due to the fact that $\lambda$ appears as a coefficient of the first order derivative term. One can remove the first order term and relate the resulting equation to an eigenvalue equation of a Sturm-Liouville problem on a weighted space $L_w^2(0,1)$ with $w(\rho) = \frac{1}{(1 - \rho^2)^2}$. Here standard methods can be applied for excluding the existence of smooth solutions. The problem is that the transformation itself depends on $\lambda$. In particular, if $\Real \lambda \leq \frac{d-3}{2} + 1$, then a function $f \in C^\infty([0,1])$ is not necessarily mapped into the space $L^2_w(0,1)$. See the discussion in \cite{Don24} for more on this.
\subsection{Supersymmetric removal}
Before proceeding, we have to ``remove'' the symmetry modes for the cases where they occur, namely twice for $\ell=0$ and once for $(\ell,m) \in \{(1,1+d-2),(1,1),(2,2+d-2)\}$. We achieve this using a factorization procedure coming from supersymmetric quantum mechanics.

We explain this for (one of the two solutions in) $\ell=0$. Take Eq.~\eqref{mode ode zero} (dropping the subscripts)
\begin{align*}
    f''(\rho) + \frac{d-1 - 2 (\lambda + 1)\rho^2}{\rho(1 - \rho^2)} f'(\rho) + \left[-\frac{\lambda(\lambda + 1)}{1 - \rho^2} + V_0(\rho)\right]f(\rho) = 0
\end{align*}
and transform it via
\begin{align*}
    f(\rho) = \rho^{\frac{1 - d}{2}} (1 - \rho^2)^{\frac{d-3 -2 \lambda}{4}} g(\rho).
\end{align*}
This yields the equation
\begin{align}\label{first order removed}
    g''(\rho) + \left[V_0(\rho) + \frac{(3-d )(d-1 + 2 \rho^2)}{4 \rho^2(1 - \rho^2)^2}\right] g(\rho) = \frac{\lambda(\lambda - (d-1))}{(1 - \rho^2)^2}g(\rho).
\end{align}
The symmetry mode yields the function
\begin{align*}
    g_{0}^0(\rho) &= \frac{\rho^\frac{d-1}{2}(1 - \rho^2)^\frac{3-d}{4}(d-\rho^2)}{d- 2 + \rho^2}
\end{align*}
which, by construction, satisfies 
\begin{align*}
    (g_0^0)''(\rho) + \left[V_0(\rho) + \frac{(3-d )(d-1 + 2 \rho^2)}{4 \rho^2(1 - \rho^2)^2}\right] g_0^0(\rho) =0,
\end{align*}
i.e.,
\begin{align*}
    -\frac{(g_0^0)''(\rho)}{g_0^0(\rho)} = V_0(\rho) + \frac{(3-d )(d-1 + 2 \rho^2)}{4 \rho^2(1 - \rho^2)^2}.
\end{align*}
From this one can calculate that 
\begin{align*}
    \pd_\rho^2 + V_0(\rho) + \frac{(3-d )(d-1 + 2 \rho^2)}{4 \rho^2(1 - \rho^2)^2} = \left(\pd_\rho + \frac{(g_0^0)'(\rho)}{g_0^0(\rho)}\right)\left(\pd_\rho - \frac{(g_0^0)'(\rho)}{g_0^0(\rho)}\right)
\end{align*}
and hence we can rewrite Eq.~\eqref{first order removed} as 
\begin{align*}
    (1 - \rho^2)^2 \left(\pd_\rho + \frac{(g_0^0)'(\rho)}{g_0^0(\rho)}\right)\left(\pd_\rho - \frac{(g_0^0)'(\rho)}{g_0^0(\rho)}\right) g(\rho) = \lambda(\lambda - (d-1))g(\rho).
\end{align*}
We now apply $\left(\pd_\rho - \frac{(g_0^0)'(\rho)}{g_0^0(\rho)}\right)$ to this equation and define $h(\rho )= \left(\pd_\rho - \frac{(g_0^0)'(\rho)}{g_0^0(\rho)}\right)g(\rho)$. This yields the equation
\begin{align}\label{first removed 0 eq}
    h''(\rho) - \frac{4 \rho}{1 - \rho^2}h'(\rho) + V(\rho) h(\rho) = \frac{\lambda(\lambda - (d-1))}{(1 - \rho^2)^2}h(\rho),
\end{align}
where now
\begin{align*}
    V(\rho) &= - V_0(\rho) - \frac{(3-d )(d-1 + 2 \rho^2)}{4 \rho^2(1 - \rho^2)^2} - 2 \left(\frac{(g_0^0)'(\rho)}{g_0^0(\rho)}\right)^2 - \frac{4 \rho}{1 - \rho^2}\frac{(g_0^0)'(\rho)}{g_0^0(\rho)}.
\end{align*}
The point of this is that, by construction, $\left(\pd_\rho  - \frac{(g_0^0)'(\rho)}{g_0^0(\rho)} \right)g_0^0(\rho) = 0$, so the symmetry mode no longer induces a nontrivial solution of Eq.~\eqref{first removed 0 eq}.

Now one has to repeat this procedure for this equation for $\lambda =1$ and for the other three equations. Finally one brings the equation into a similar form as Eqs.~\eqref{mode ode zero} and \eqref{mode ode l m}. More precisely, one obtains the equation
\begin{align}\label{susy removed eq}
    \Tilde{f}''(\rho) + \frac{d-1 - 2 (\lambda + 1)\rho^2}{\rho(1 - \rho^2)}\Tilde{f}'(\rho) + \left[ -\frac{\lambda(\lambda + 1)}{1 - \rho^2} + \Tilde{V}_{\ell,m}(\rho) \right]\Tilde{f}(\rho) = 0,
\end{align}
where
\begin{align}\label{transformed V}
    \Tilde{V}_{\ell,m}(\rho) = \begin{cases}
        -\frac{2d}{\rho^2(1 - \rho^2)}, &\ell=0\\
        -\frac{2(d-2)(d-\rho^2)}{\rho^2(1 - \rho^2)(d-2 + \rho^2)},&\ell=1,m=1+d-2\\
        \frac{-2d(d-2) +2 \rho^4}{\rho^2(1 - \rho^2)(d-2 + \rho^2)}, &\ell=1,m=1\\
        \frac{-3(d+1)(d-2) + (d-3)\rho^2}{\rho^2(1 - \rho^2)(d- 2 + \rho^2)},&\ell=2,m=2+d-2
    \end{cases}.
\end{align}
The transformation is given by 
\begin{align*}
    &\Tilde{f}(\rho)\\
    &=\rho^{-\frac{d-1}{2}} (1 - \rho^2)^\frac{d+1 - 2 \lambda}{4} \left( \pd_\rho  - \frac{(\Tilde{g}_0^1)'(\rho)}{\Tilde{g}_0^1(\rho)} \right)(1 - \rho^2) \left( \pd_\rho - \frac{(g_0^0)'(\rho)}{g_0^0(\rho)}  \right) \rho^\frac{d-1}{2}(1-\rho^2)^\frac{2\lambda - (d-3)}{4}f(\rho),
\end{align*}
for $\ell=0$ and
\begin{align*}
    \Tilde{f}(\rho) = \begin{cases}
        \rho^{-\frac{d-1}{2}}(1 - \rho^2)^\frac{d+1 - 2 \lambda}{4} \left( \pd_\rho - \frac{g_{1,1+d-2}'(\rho)}{g_{1,1+d-2}(\rho)} \right) \rho^\frac{d-1}{2}(1 - \rho^2)^\frac{2 \lambda - (d-3)}{4} f(\rho), & \ell=1,m=1+d-2\\
        \rho^{-\frac{d-1}{2}}(1 - \rho^2)^{\frac{d+1 - 2 \lambda}{4}}\left( \pd_\rho - \frac{g_{1,1}'(\rho)}{g_{1,1}(\rho)} \right) \rho^\frac{d-1}{2}(1 - \rho^2)^\frac{2\lambda - (d-3)}{4} f(\rho), & \ell=1,m=1\\
        \rho^{-\frac{d-1}{2}}(1 - \rho^2)^{\frac{d+1 - 2 \lambda}{4}}\left( \pd_\rho - \frac{g_{2,2+d-2}'(\rho)}{g_{2,2+d-2}(\rho)} \right) \rho^\frac{d-1}{2}(1 - \rho^2)^\frac{2\lambda - (d-3)}{4} f(\rho), & \ell=2,m=2+d-2
    \end{cases},
\end{align*}
where
\begin{align*}
    \frac{(g_0^0)'(\rho)}{g_0^0(\rho)}&= \frac{d(d-1)(d-2) - 2 (d^2 + d - 3)\rho^2 + (7d - 11)\rho^4 + 2 \rho^6}{2 \rho(1 - \rho^2)(d- \rho^2)(d-2 + \rho^2)} \\
    \frac{(\Tilde{g}_0^1)'(\rho)}{\Tilde{g}_0^1(\rho)} &= \frac{d(d+1)  + (3 - 7d)\rho^2 + 2 \rho^4}{2 \rho(1 - \rho^2)(d-\rho^2)}\\
    \frac{g_{1,1+d-2}'(\rho)}{g_{1,1+d-2}(\rho)} &= \frac{(d+1)(d-2) + (- 5 d + 9)\rho^2  - 2 \rho^4}{2 \rho(1 - \rho^2)(d-2 + \rho^2)}\\
    \frac{g_{1,1}'(\rho)}{g_{1,1}(\rho)} &=\frac{(d+1 )(d-2) + (5 - 3d)\rho^2 }{2 \rho(1- \rho^2)(d-2 + \rho^2)}\\
    \frac{g_{2,2+d-2}'(\rho)}{g_{2,2+d-2}(\rho)} &=\frac{(d-2)(d+3) + (11 - 5d )\rho^2 - 2 \rho^4 }{2 \rho (1 - \rho^2)(d-2 + \rho^2)}.
\end{align*}
One then has to check that this transformation preserves smoothness.
\begin{lem}
Let either $\ell=0,\lambda \in \C\setminus \{0,1\}$, $\ell=1,m=1+d-2,\lambda \in \C \setminus\{1\}$, $\ell=1,m=1,\lambda \in \C \setminus\{0\}$ or $\ell=2,m=2+d-2,\lambda \in \C\setminus\{0\}$ and assume that $f \in C^\infty([0,1])$ is a nontrivial solution of Eq.~\eqref{mode ode zero}, respectively Eq.~\eqref{mode ode l m}. Then $\Tilde{f}$ defined by the above transformation is a nontrivial solution of Eq.~\eqref{susy removed eq} and $\Tilde{f} \in C^\infty([0,1])$.
\end{lem}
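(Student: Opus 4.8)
\noindent The proof splits into three parts: that $\Tilde f$ solves Eq.~\eqref{susy removed eq}, that $\Tilde f$ is not identically zero, and that $\Tilde f\in C^\infty([0,1])$. The first is essentially forced by the construction, smoothness on the open interval is immediate, and the real work is the analysis at the two endpoints $\rho=0$ and $\rho=1$. I expect the analysis at $\rho=1$, where the conjugating powers of $1-\rho^2$ carry the spectral parameter $\lambda$, to be the main obstacle.

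\medskip

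\noindent\textbf{The equation and the interior.} Each factor of the transformation is a supersymmetric intertwiner. The substitution $f=\rho^{\frac{1-d}{2}}(1-\rho^2)^{\frac{d-3-2\lambda}{4}}g$ brings Eq.~\eqref{mode ode zero} (resp.\ Eq.~\eqref{mode ode l m}) into the first-order-free form \eqref{first order removed}; applying $\pd_\rho-\frac{(g_0^0)'}{g_0^0}$, which is legitimate because $g_0^0$ solves the corresponding equation at spectral parameter $0$, yields a solution of the partner equation \eqref{first removed 0 eq}; multiplying by $1-\rho^2$, applying $\pd_\rho-\frac{(\Tilde g_0^1)'}{\Tilde g_0^1}$ (in the case $\ell=0$), and conjugating back by $\rho^{-\frac{d-1}{2}}(1-\rho^2)^{\frac{d+1-2\lambda}{4}}$ produces, after simplification, exactly Eq.~\eqref{susy removed eq} with the potentials \eqref{transformed V}. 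Verifying this amounts to a direct computation with the displayed partial-fraction expressions for $\frac{(g_0^0)'}{g_0^0}$, $\frac{(\Tilde g_0^1)'}{\Tilde g_0^1}$, $\frac{g_{1,1+d-2}'}{g_{1,1+d-2}}$, and so on. Moreover each of $g_0^0,\Tilde g_0^1,g_{1,1+d-2},g_{1,1},g_{2,2+d-2}$ is strictly positive on $(0,1)$ (being the relevant symmetry mode up to a positive factor), so the logarithmic derivatives are smooth there and $\Tilde f\in C^\infty(0,1)$.

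\medskip

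\noindent\textbf{The point $\rho=1$: smoothness and nontriviality.} Here I would use a Frobenius analysis. Both Eq.~\eqref{mode ode zero} and Eq.~\eqref{mode ode l m} have a regular singular point at $\rho=1$ with indicial exponents $0$ and $\frac{d-1}{2}-\lambda$ (the zeroth-order coefficient has only a simple pole). For $\frac{d-1}{2}-\lambda\notin\Z_{\ge 0}$ — the exceptional $\lambda$ being handled by the same argument with a shifted starting order — a nontrivial smooth solution $f$ is the analytic branch, so in the variable $w=1-\rho$ one has $g=w^{\mu_0}G(w)$ with $\mu_0=\frac{2\lambda-d+3}{4}$ and $G$ analytic, $G(0)\ne0$. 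Each logarithmic derivative entering the transformation has a simple pole at $\rho=1$ whose residue is visible in the displayed formulas and equals $\frac{d-3}{4}$ or $\frac{d-5}{4}$; hence $\pd_\rho-\frac{h'}{h}$ sends $w^\nu\cdot(\text{analytic, nonzero at }0)$ to $-\bigl(\nu+\mathrm{res}\bigr)w^{\nu-1}\cdot(\text{analytic, nonzero at }0)$ whenever $\nu+\mathrm{res}\ne0$. Carrying this through the composition, the exponent runs through $\mu_0,\ \mu_0-1,\ \mu_0,\ \mu_0-1,\ \mu_0-1$ in the case $\ell=0$ (through $\mu_0,\ \mu_0-1,\ \mu_0-1$ in the other three cases), the arising leading coefficients being proportional to $\lambda$ for residue $\frac{d-3}{4}$ and to $\lambda-1$ for residue $\frac{d-5}{4}$ — this is exactly where the hypotheses $\lambda\notin\{0,1\}$, $\lambda\ne1$, $\lambda\ne0$, $\lambda\ne0$ enter — and the final factor $(1-\rho^2)^{\frac{d+1-2\lambda}{4}}$ shifts the exponent to $\mu_0-1+\frac{d+1-2\lambda}{4}=0$. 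Thus $\Tilde f$ is analytic and nonzero at $\rho=1$, giving both smoothness there and nontriviality.

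\medskip

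\noindent\textbf{The point $\rho=0$.} All the equations involved are invariant under $\rho\mapsto-\rho$, so a smooth solution $f$ of Eq.~\eqref{mode ode zero} (resp.\ Eq.~\eqref{mode ode l m}) with the admissible indicial exponent — which is $0$ for Eq.~\eqref{mode ode zero} and $\ell$ for Eq.~\eqref{mode ode l m} — equals $\rho^\ell$ times a smooth function of $\rho^2$, whence $g=\rho^{\frac{d-1}{2}+\ell}\psi(\rho^2)$ with $\psi$ smooth. The elementary point is that if $g=\rho^a\psi$ with $\psi$ smooth and even and $\psi(0)\ne0$, then $\frac{g'}{g}=\frac a\rho+(\text{smooth odd})$, so $\pd_\rho-\frac{g'}{g}$ maps $\rho^a\cdot(\text{smooth even})$ into $\rho^{a+1}\cdot(\text{smooth even})$, since $\bigl(\pd_\rho-\frac a\rho\bigr)\bigl(\rho^a\phi(\rho^2)\bigr)=2\rho^{a+1}\phi'(\rho^2)$. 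Applying this at each step — with $a=\frac{d-1}{2}+\ell$, and then $a=\frac{d+1}{2}$ for the second intertwiner in the case $\ell=0$, using that $g_0^0,\Tilde g_0^1,g_{1,1+d-2},g_{1,1},g_{2,2+d-2}$ have precisely these leading powers — followed by multiplication by $\rho^{-\frac{d-1}{2}}$ and by the factor $(1-\rho^2)^{\frac{d+1-2\lambda}{4}}$, which is smooth and nonzero at $\rho=0$, one obtains $\Tilde f=\rho^s\cdot(\text{smooth even})$ near $\rho=0$ with $s\in\{2,3\}$, matching the admissible indicial exponent of Eq.~\eqref{susy removed eq}. Together with $\Tilde f\in C^\infty(0,1)$ and analyticity at $\rho=1$, this gives $\Tilde f\in C^\infty([0,1])$.
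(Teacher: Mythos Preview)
Your proof is correct and follows the same overall outline as the paper (construction gives the equation; local analysis at the two endpoints), but the execution differs at every nontrivial step.

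At $\rho=0$ your parity argument is cleaner than the paper's: tracking that $\pd_\rho-\tfrac{g'}{g}$ (with $g=\rho^a\cdot\text{even}$) sends $\rho^a\cdot(\text{smooth even})$ to $\rho^{a+1}\cdot(\text{smooth even})$ handles all four cases uniformly. The paper instead does a direct Frobenius computation; for $\ell\geq 1$ this gives $\Tilde f=\rho^{\ell-1}\cdot(\text{analytic})$ at once, but for $\ell=0$ the direct calculation only yields $|\Tilde f(\rho)|\lesssim\rho^{-2}$, and the paper then has to invoke the indices $\{2,-d\}$ of Eq.~\eqref{susy removed eq} to exclude the singular branch. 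At $\rho=1$ the paper merely says ``Fuchsian $\Rightarrow$ analytic, then a calculation shows $\Tilde f$ analytic'', and proves nontriviality \emph{separately} by identifying the (one- or two-dimensional) kernel of the transformation on $(0,1)$ and checking that no element of it is simultaneously a smooth solution for the excluded $\lambda$. Your leading-order tracking at $\rho=1$ yields analyticity and $\Tilde f(1)\neq 0$ (hence nontriviality) in one stroke and makes transparent why exactly $\lambda\in\{0,1\}$ must be excluded. Two points to tighten: (i) the residues of the logarithmic derivatives at $\rho=1$, as coefficients of $(\rho-1)^{-1}$, are $\tfrac{3-d}{4}$ and $\tfrac{5-d}{4}$, opposite in sign to what you wrote; your formula $-(\nu+\mathrm{res})$ is correct only if ``res'' is taken in the variable $w=1-\rho$. (ii) In the exceptional case $\tfrac{d-1}{2}-\lambda\in\Z_{>0}$ a smooth $f$ may vanish at $\rho=1$; ``same argument with shifted order'' should be spelled out (the shifted leading coefficients are $-\tfrac{\lambda}{2}-k$ resp.\ $-\tfrac{\lambda-1}{2}-k$ with $k=\tfrac{d-1}{2}-\lambda$, which are indeed nonzero for $d\geq 4$), or one can fall back on the paper's kernel argument for nontriviality.
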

\begin{proof}
By construction $\Tilde{f}$ solves Eq.~\eqref{susy removed eq}. Then one checks in each case that if $f \not=0$ is mapped to $0$ by this transformation, then either $f$ does not solve the original equation, or, if it does, then it is not smooth.

Concerning smoothness of $\Tilde{f}$, we see that smoothness on $(0,1)$ is preserved.

We treat the two endpoints separately using Frobenius theory, see for example \cite{Tes12}.
\begin{itemize}
    \item $\rho=0:$ The equations \eqref{mode ode zero},\eqref{mode ode l m} have the Frobenius indices $\{\ell,-(\ell+d-2)\}$ at $\rho=0$. Since $-(\ell+d-2) < 0$ we must have $f(\rho) = \rho^\ell h(\rho)$ for a function $h$ that is analytic around $\rho=0$ with $h(0)\not=0$. Hence we can write 
    \begin{align*}
        \rho^\frac{d-1}{2}(1 - \rho^2)^\frac{2\lambda - (d-3)}{4}f(\rho) = \rho^{\ell+  \frac{d-1}{2}}\Tilde{h}(\rho),
    \end{align*}
    where again $\Tilde{h}$ is analytic around $\rho=0$. Then for any $w$ analytic around $\rho=0$, one calculates
    \begin{align*}
        &\rho^{-\frac{d-1}{2}} (1 - \rho^2)^\frac{d+1 - 2 \lambda}{4} \left(\pd_\rho - \frac{w(\rho)}{\rho}\right) [\rho^{\ell + \frac{d-1}{2}} \Tilde{h}(\rho)]\\
        &= \rho^{\ell-1}(1 - \rho^2)^\frac{d+1 - 2 \lambda}{4} \left[\left(\ell + \frac{d-1}{2}\right) \Tilde{h}(\rho) + \rho \Tilde{h}'(\rho) - w(\rho) \Tilde{h}(\rho) \right],
    \end{align*}
    which certainly is again analytic around $\rho=0$ for $\ell \geq 1$.
    
    For $\ell=0$ one checks similarly that for $f$ analytic around $\rho=0$, one can find $h$ analytic around $\rho=0$ such that 
    \begin{align*}
        \Tilde{f}(\rho) = \rho^{-2} h(\rho).
    \end{align*}
    In particular $|\Tilde{f}(\rho)| \lesssim \rho^{-2}$ near $\rho=0$. Since $\Tilde{f}$ solves Eq.~\eqref{susy removed eq} which has the Frobenius indices $\{2,-d\}$ at $\rho=0$, we can exclude that it corresponds to the order $-d$ and hence must be of the form $\Tilde{f}(\rho) = \rho^2 \Tilde{h}(\rho)$ with $\Tilde{h}$ analytic around $\rho=0$. This means in particular that $\Tilde{f}$ is analytic and hence smooth around $\rho=0$.
    \item $\rho=1:$ The equations \eqref{mode ode zero}, \eqref{mode ode l m} are both of Fuchsian type. This means a solution $f$, which is smooth around $\rho=1$, is already analytic around $\rho=1$. Then a calculation shows that again $\Tilde{f}$ is analytic around $\rho=1$ and in particular smooth.
\end{itemize}
\end{proof}
\subsection{Standard Heun form}
We set $\Tilde{V}_{\ell,m} = V_{\ell,m}$ except in the four special cases in \eqref{transformed V}. Observe that Eq.~\eqref{susy removed eq} is of Heun type and has the singularities $0,\pm 1,\pm \sqrt{d-2} i,\infty$. We set $\Tilde{f}(\rho) = g(\rho^2)$ and $x = \rho^2$. This transforms Eq.~\eqref{susy removed eq} into 
\begin{align*}
    g''(x) + \frac{d - (2\lambda + 3)x}{2x(1-x)}g'(x) + \left[ - \frac{\lambda(\lambda + 1)}{4x(1-x)} + \frac{\Tilde{V}_{\ell,m}(\sqrt{x})}{4x} \right]g(x) = 0.
\end{align*}
This equation now only has the four singular points $0,1,-(d-2),\infty$. Then bringing this into standard Heun form yields
\begin{align}\label{standard Heun eq}
    h''(x) + p_{\ell,m}(x)h'(x) + q_{\ell,m}(x) h(x) =0,
\end{align}
where
\begin{align*}
    p_{\ell,m}(x) = \begin{cases}
        \frac{d+4}{2x} + \frac{2\lambda + 3-d }{2(x-1)}, &\ell=0\\
        \frac{d+4}{2x} + \frac{2\lambda  + 3-d }{2(x-1)}, &\ell=1,m=1+d-2\\
        \frac{d+4}{2x} + \frac{2\lambda + 3-d }{2(x-1)}, &\ell=1,m=1\\
        \frac{d+6}{2x} + \frac{2\lambda  + 3-d }{2(x-1)},&\ell=2,m=2+d-2\\
        \frac{d+2\ell}{2x} + \frac{2\lambda  +3-d }{2(x-1)} - \frac{2}{d-2+ x},&\text{else}
    \end{cases}
\end{align*}
and
\begin{align*}
    q_{\ell,m}(x) = \begin{cases}
        \frac{(\lambda + 2)(\lambda + 3)}{4x(x-1)},& \ell=0\\
        \frac{(d-2)\lambda^2 + 5(d-2)\lambda +2d -8 + (\lambda + 2)(\lambda +3)x}{4x(x-1)(d-2 + x)}, & \ell=1,m=1+d-2\\
        \frac{(d-2)\lambda^2 + 5(d-2)\lambda + 4d -12 + (\lambda + 1)(\lambda +4)x}{4x(x-1)(d-2+ x)},&\ell=1,m=1\\
        \frac{(d-2) \lambda^2 + 7 (d-2) \lambda + 8 d -24 + (\lambda + 3)(\lambda + 4)x}{4x(x-1)(d-2+ x)},&\ell=2,m=2-d+2\\
        \frac{(d-2)\ell^2 + (d+2)\ell + (d-2)\lambda (2\ell + \lambda + 1) + 4 -2d - 4m + (\lambda + \ell-1)(\lambda + \ell -2)x}{4x(x-1)(d-2 + x)}, &\text{else}
    \end{cases}.
\end{align*}
The transformation is given by 
\begin{align*}
    g(x) = \begin{cases}
        x h(x),&\ell=0\\
        x h(x), &\ell=1,m=1+d-2\\
        x h(x),& \ell=1,m=1\\
        x^\frac{3}{2}h(x),&\ell=2,m=2+d-2\\
        x^\frac{\ell}{2}(x+d-2)^{-1} h(x),&\text{else}
    \end{cases}.
\end{align*}
Clearly, $h$ defined in this way is again a nontrivial solution for $\rho^2=x \in (0,1]$. It is also smooth around $x =0$, which is checked using Frobenius analysis and a parity argument.
\subsection{\texorpdfstring{$\ell=0$}{l = 0}}
\label{l0 case}
For $\ell=0$ one can rewrite Eq.~\eqref{standard Heun eq} as
\begin{align*}
    x(1-x)h''(x) + \left(c - \left(a+b + 1\right)x \right) h'(x) - ab h(x) = 0
\end{align*}
where
\begin{align*}
    a &= \frac{\lambda +2}{2}\\
    b &= \frac{\lambda + 3}{2}\\
    c &= \frac{d+4}{2}.
\end{align*}
This is a hypergeometric differential equation (see e.g. \cite{NIST:DLMF}) and its smooth solution around $x =0$ is explicitly given as
\begin{align*}
    h(x) = {_2 F_1}(a,b,c;x) = \sum_{n=0}^\infty \frac{(a)_n (b)_n}{(c)_n n!} x^n,
\end{align*}
where $(a)_n := a(a+1)\cdots (a+n-1)$ is the Pochhammer symbol. None of $a,b,c$ can be nonpositive integers for $\Real \lambda \geq 0$ and hence the ratio test shows that the radius of convergence with respect to $x =0$ is $1$. Since the hypergeometric equation only has the singular points $x =0,1,\infty$, we conclude that ${_2F_1}(a,b,c;\cdot)$ cannot be smooth at $x =1$ and in particular does not belong to $C^\infty([0,1])$.
\subsection{Recurrence relation}
For $\ell \geq 1$, we are dealing with genuine Heun equations, where there are no explicit solution formulae available. In particular, the 
\emph{connection problem} is open. As a consequence, it is unknown how a solution that is smooth near $x=0$ relates to a solution that is smooth near $x=1$.

Nevertheless, we attempt to analyze the behavior of the coefficients of the power series. Inserting the ansatz
\begin{align*}
    h(x) = \sum_{n = 0}^\infty a_n x^n
\end{align*}
into Eq.~\eqref{standard Heun eq} yields the recurrence relation
\begin{align*}
    a_{n+2} = A_n(\lambda) a_{n+1} +  B_n(\lambda) a_n
\end{align*}
for all $n \geq -1$, where we set $a_{-1}= 0$ for convenience, with
\begin{align}\label{An special cases}
    A_n(\lambda) = \begin{cases}
        \frac{4(d-3)n^2 + [ 4(d-2)\lambda + 16(d-3) ]n + (d-2)\lambda^2 + 9(d-2)\lambda + 14d - 44}{2(d-2)(n+2)(2n + d +6)}, &\ell=1,m=1+d-2\\
        \frac{4(d-3)n^2 + [ 4(d-2)\lambda + 16(d-3) ]n + (d-2)\lambda^2 + 9(d-2)\lambda + 16(d - 3)}{2(d-2)(n+2)(2n + d +6)}, & \ell=1,m=1\\
         \frac{4(d-3)n^2 + [4(d-2)\lambda + 20 (d-3)]n + (d-2) \lambda^2 + 11(d-2)\lambda + 24(d -3)}{2(d-2)(n+2)(2n + d +8)}, & \ell=2,m=2+d-2
    \end{cases}
\end{align}
and otherwise
\begin{align}\label{An general case}
    A_{n,d,\ell,m}(\lambda):=A_n(\lambda) &:=\frac{4 (d-3)n^2 + [4(d-2)\lambda +4(d-3)\ell + 8d -16 ]n}{2(d-2)(n+2)(2n + 2\ell + d +2)}\\
        &+\frac{(d-2)\lambda^2 + (d-2)(2\ell + 5)\lambda + (d-2)\ell^2 + 5(d-2)\ell + 2d -4m}{2(d-2)(n+2)(2n + 2\ell + d +2)}\notag
\end{align}
as well as
\begin{align}\label{Bn all cases}
    B_{n,d,\ell,m}(\lambda):=B_n(\lambda) &:= \begin{cases}
        \frac{(2n+  \lambda + 2)(2n + \lambda + 3)}{2(d-2)(n+2)(2n + d +6)}, &\ell=1,m=1+d-2\\
        \frac{(2n+  \lambda + 1)(2n + \lambda + 4)}{2(d-2)(n+2)(2n + d +6)}, & \ell=1,m=1\\
         \frac{(2n + \lambda + 3)(2n + \lambda + 4)}{2(d-2)(n+2)(2n + d +8)}, & \ell=2,m=2+d-2\\
         \frac{(2n + \lambda + \ell - 2)(2n + \lambda + \ell -1)}{2(d-2)(n+2)(2n + 2\ell + d +2)}, & \text{else}
    \end{cases}.
\end{align}
We will mostly suppress the parameters $d,\ell,m$ in the subscripts for brevity.
\begin{defin}
For $\lambda \in \C$, the sequence $(a_n(\lambda))_{n \geq -1}$ is defined recursively by $a_{-1}(\lambda) =0$, $a_0(\lambda) = 1$ and 
\begin{align}\label{a recurr rel}
    a_{n+2}(\lambda) = A_n(\lambda) a_{n+1}(\lambda) + B_n(\lambda) a_n(\lambda)
\end{align}
for $n \geq -1$.
\end{defin}
Our ultimate goal is to prove that the radius of convergence of $h$ cannot be too large. Hence we will analyze the ratios of $a_n(\lambda)$. We actually have the following dichotomy.
\begin{lem}\label{ratios converge}
Let $\Real \lambda \geq 0$. Then $\frac{a_{n+1}(\lambda)}{a_n(\lambda)}$ converges as $n \to \infty$ and
\begin{align*}
    \lim_{n \to \infty} \frac{a_{n+1}(\lambda)}{a_n(\lambda)} \in \left\{1, - \frac{1}{d-2}  \right\}.
\end{align*}
\end{lem}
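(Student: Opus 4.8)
\section*{Proof proposal}

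The plan is to recognise Lemma~\ref{ratios converge} as an instance of the classical Poincar\'e--Perron theorem on the asymptotics of solutions of linear difference equations, once the limiting coefficients of the recursion \eqref{a recurr rel} have been identified. Recall that in the present paper $d\geq 4$.

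\textbf{Step 1: the limiting coefficients.} Inspecting the explicit formulas \eqref{An special cases}, \eqref{An general case} and \eqref{Bn all cases} one sees that in each of the four cases the numerator and denominator of $A_n(\lambda)$ (resp.\ $B_n(\lambda)$) are quadratic polynomials in $n$, so the limits exist and, crucially, do \emph{not} depend on $\lambda$, $\ell$, or $m$:
\begin{align*}
    A:=\lim_{n\to\infty}A_n(\lambda)=\frac{d-3}{d-2},\qquad B:=\lim_{n\to\infty}B_n(\lambda)=\frac{1}{d-2}.
\end{align*}
In particular $B\neq 0$.

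\textbf{Step 2: the characteristic roots.} Trying $a_n\sim t^n$ in the limiting recursion $a_{n+2}=Aa_{n+1}+Ba_n$ leads to $t^2-At-B=0$, i.e., after clearing denominators, $(d-2)t^2-(d-3)t-1=0$, which factors as $(t-1)\big((d-2)t+1\big)=0$. Hence the two roots are
\begin{align*}
    t_1=1,\qquad t_2=-\frac{1}{d-2},
\end{align*}
and for $d\geq 4$ they have distinct absolute values, $|t_1|=1>\tfrac{1}{d-2}=|t_2|$. These are exactly the two values appearing in the statement.

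\textbf{Step 3: nontriviality.} Since $a_0(\lambda)=1$ the sequence $(a_n(\lambda))_{n\geq-1}$ is not identically zero, and since $B_n(\lambda)\to\tfrac{1}{d-2}\neq 0$ we have $B_n(\lambda)\neq 0$ for all large $n$. If $a_n(\lambda)$ vanished for all $n\geq N$, then reading \eqref{a recurr rel} backwards, $a_{n-1}(\lambda)=B_{n-1}(\lambda)^{-1}\big(a_{n+1}(\lambda)-A_{n-1}(\lambda)a_n(\lambda)\big)$, forces $a_n(\lambda)=0$ for all $n$, a contradiction. (The finitely many indices $n$ for which $B_n(\lambda)$ could vanish when $\Real\lambda\geq 0$ are dealt with by a direct check on the explicit formula \eqref{Bn all cases}, using $a_0(\lambda)=1$.) Thus $(a_n(\lambda))$ is a nontrivial solution.

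\textbf{Step 4: conclusion via Poincar\'e--Perron.} For the recursion $a_{n+2}=A_n(\lambda)a_{n+1}+B_n(\lambda)a_n$ with coefficients converging to $A,B$, $B\neq 0$, and characteristic roots $t_1,t_2$ of distinct modulus, the Poincar\'e--Perron theorem asserts that every nontrivial solution is eventually nonzero and that $\tfrac{a_{n+1}(\lambda)}{a_n(\lambda)}$ converges to one of the characteristic roots. Combined with Steps~1--3 this gives $\lim_{n\to\infty}\tfrac{a_{n+1}(\lambda)}{a_n(\lambda)}\in\{1,-\tfrac{1}{d-2}\}$, as claimed.

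\textbf{On the difficulty.} There is no deep obstacle here; the only work is the bookkeeping in Step~1 (extracting the common leading behaviour of the four coefficient families) and the minor non-vanishing check in Step~3. If one prefers a self-contained argument instead of quoting Poincar\'e--Perron, one sets $r_n:=a_{n+1}(\lambda)/a_n(\lambda)$, which (formally) obeys $r_{n+1}=A_n(\lambda)+B_n(\lambda)/r_n$, a vanishing perturbation of the M\"obius map $r\mapsto A+B/r$ whose fixed points are precisely $t_1,t_2$; since $t_1t_2=-B$, its derivative at $t_j$ equals $t_{3-j}/t_j$, so $t_1=1$ is attracting and $t_2=-\tfrac{1}{d-2}$ is repelling, and one must then rule out that $r_n$ wanders indefinitely and show it is eventually trapped near one of the two fixed points --- this requires some care but no new ideas. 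For later use it is worth noting that the generic alternative is $\lim=1$ (radius of convergence $1$), the recessive alternative $\lim=-\tfrac{1}{d-2}$ being realised exactly by the symmetry modes.
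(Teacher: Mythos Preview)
Your proposal is correct and follows essentially the same approach as the paper: compute the limits of $A_n,B_n$, identify the characteristic roots $1$ and $-\tfrac{1}{d-2}$, rule out eventual vanishing, and invoke Poincar\'e's theorem. The only place the paper is more explicit is your Step~3: rather than a generic backward-recursion argument with a deferred ``direct check,'' the paper pins down that $B_n(\lambda)=0$ with $n\geq 0$ and $\Real\lambda\geq 0$ can occur only at $n=0$ in the cases $(\ell,m)\in\{(1,-1),(2,1),(2,-2)\}$, and then verifies $a_1=A_{-1}(\lambda)\neq 0$ there---you should spell this out rather than leave it parenthetical.
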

\begin{proof}
We want to apply Poincaré's theorem on difference equations \autoref{poincare}, so we calculate
\begin{align*}
    \lim_{n \to \infty}A_n(\lambda) &= \frac{d-3}{d-2}\\
    \lim_{n \to \infty}B_n(\lambda) &= \frac{1}{d-2}.
\end{align*}
Hence the characteristic equation of the recursion \eqref{a recurr rel} is 
\begin{align*}
    \alpha^2 - \frac{d-3}{d-2} \alpha - \frac{1}{d-2} = 0
\end{align*}
which has the roots $\alpha_1 = 1,\alpha_2 = - \frac{1}{d-2}$. Poincaré's theorem then gives us the desired result if we exclude the possibility that $a_n(\lambda) = 0$ eventually in $n$.

Assume that $N$ is such that $a_n(\lambda) = 0$ for all $n \geq N$ and $a_{N-1}(\lambda) \not=0$ (which necessarily implies $N \geq 1$). Then in particular we have 
\begin{align*}
    0 = a_{N+1}(\lambda) = A_{N-1}(\lambda) \underbrace{a_N (\lambda)}_{=0} + B_{N-1} a_{N-1}(\lambda) = B_{N-1} a_{N-1}(\lambda)
\end{align*}
and since we assumed $a_{N-1}(\lambda) \not=0$, we must have $B_{N-1}(\lambda) = 0$. So this shows that we only have to consider the cases where $B_n(\lambda) = 0$ occurs for some $n \geq 0$ and $\Real \lambda \geq 0$. By inspection, this only occurs in the cases $\ell=1,m=-1$ and $\ell=2,m=1,-2$. We have 
\begin{align*}
    B_{0,d,1,-1}(0) &= B_{0,d,1,-1}(1) = 0\\
    B_{0,d,2,1}(0) &= 0\\
    B_{0,d,2,-2}(0) &= 0.
\end{align*}
Hence we have to exclude in these cases that $a_n(\lambda) = 0$ for all $n \geq N = 1$. But if this were the case, then
\begin{align*}
    0 = a_1(\lambda) = A_{-1}(\lambda) \underbrace{a_0(\lambda)}_{=1} +B_{-1}(\lambda) \underbrace{a_{-1}(\lambda)}_{=0} = A_{-1}(\lambda).
\end{align*}
Then one checks that in these cases $A_{-1}$ does not vanish, hence excluding that $a_n(\lambda) = 0 $ eventually.
\end{proof}
If $\lim_{n \to \infty} \frac{a_{n+1}(\lambda)}{a_n(\lambda)} = - \frac{1}{d-2}$, the power series $\sum_{n = 0}^\infty a_n(\lambda )x^n$ has convergence radius $d-2$ and hence yields a $C^\infty([0,1])$ solution of Eq.~\eqref{standard Heun eq}, which is what we want to rule out. If this does not occur, then, by the previous lemma, we must have $\lim_{n \to \infty} \frac{a_{n+1}(\lambda)}{a_n(\lambda)} = 1$, i.e., the corresponding power series has convergence radius 1. Since $x =1$ is the only other singularity of Eq.~\eqref{standard Heun eq} within distance one of $x=0$, we can conclude that this solution does not belong to $C^\infty([0,1])$.
Hence, the goal is to prove that the first case never occurs.

Whenever $a_n(\lambda) \not=0$, we define $r_n(\lambda) := \frac{a_{n+1}(\lambda)}{a_n(\lambda)}$. Then one can rewrite Eq.~\eqref{a recurr rel} as
\begin{align}\label{r recurr rel}
    r_{n+1}(\lambda) = A_n(\lambda) + \frac{B_n(\lambda)}{r_n(\lambda)}
\end{align}
for $n \geq 0$ if $r_n(\lambda) \not=0$. The initial condition is given by
\begin{align*}
    r_0(\lambda) = \frac{a_1(\lambda)}{ a_0(\lambda)} = A_{-1}(\lambda) a_0(\lambda) + B_{-1}(\lambda) a_{-1}(\lambda) = A_{-1}(\lambda).
\end{align*}
\subsection{Quasi-solutions}
In this formulation we aim to prove that $r_n(\lambda)$ is well-defined eventually in $n$ and $\lim_{n \to \infty} r_n(\lambda) = 1$. For this, we will construct a so-called \emph{quasi-solution} $\Tilde{r}_n$ that is supposed to approximate $r_n$ sufficiently well.

In order to do this, first note that $r_n(\lambda)$ is a rational function in $\lambda$, where the degrees of the numerator and the denominator differ by 2. So we expect that the behavior for large $\lambda$ is similar to a second degree polynomial in $\lambda$. Now one can check (for the general case) that 
\begin{align}\label{lambda asymp of rn}
    r_n(\lambda) = \frac{\lambda^2}{2(n+1)(2n +2\ell + d)} + \frac{(4n + 2\ell +1 )\lambda}{2(n+1)(2n +2\ell +d)} + O_{n,d,\ell}(1),\quad \lambda \to \infty.
\end{align}
Similarly, one has 
\begin{align*}
    r_n(\lambda) = \frac{\ell}{4(n+1)} + O_{n,d,\lambda}(1),\quad \ell \to \infty.
\end{align*}
Hence a reasonable guess would be to set
\begin{align*}
    \Tilde{r}_n(\lambda) =  \frac{\lambda^2}{2(n+1)(2n +2\ell + d)} + \frac{(4n + 2\ell +1 )\lambda}{2(n+1)(2n +2\ell +d)} + \frac{\ell}{4(n+1)} + E_{d,\ell,m}(n),
\end{align*}
where the error term $E_{d,\ell,m}(n)$ satisfies
\begin{align*}
    \lim_{n \to \infty} E_{d,\ell,m}(n) = 1.
\end{align*}
This turns out to be somewhat successful after some modifications, at least as long as either $d \leq 5$ or $\ell \leq 2$.

To this end, let us first consider the special cases
\begin{align*}
    \Tilde{r}_{n,d,\ell,m}(\lambda) := \begin{cases}
        \frac{\lambda^2}{2(n+1)(2n + 8)} + \frac{(4n + 5)\lambda}{2(n+1)(2n+8)} + \frac{4n +5}{2(2n + 7)},&d=4,\ell=1,m=1+4-2\\
        \frac{\lambda^2}{2(n+1)(2n + d +4)} + \frac{(4n + 5)\lambda}{2(n+1)(2n + d +4)} + \frac{4n + 5}{2(2n + d + 4)}, &d\geq 5,\ell=1, m=1+d-2\\
        \frac{\lambda^2}{2(n+1)(2n + 8)} + \frac{(4n + 5)\lambda}{2(n+1)(2n + 8)} + \frac{20n + 33}{10(2n + 9)}, &d=4,\ell=1,m=1\\
        \frac{\lambda^2}{2(n+1)(2n+d+4)} + \frac{(4n+5)\lambda}{ 2(n+1)(2n+d+4)} + \frac{2n+3}{2n + d +5}, &d\geq 5,\ell=1,m=1\\
        \frac{\lambda^2}{2(n+1)(2n + 10)} + \frac{(4n + 7)\lambda}{2(n+1)(2n + 10)} + \frac{2n + 3}{2n +7},&d=4,\ell=2,m=2+d-2\\
        \frac{\lambda^2}{2(n+1)(2n + d + 6)} + \frac{(4n + 7)\lambda}{2(n+1)(2n+d+6)} + \frac{2n + 5}{2n + d + 6},&d\geq 5,\ell=2,m=2+d-2
    \end{cases}.
\end{align*}
Then, in the general case with $\ell \leq 2$
\begin{align*}
    \Tilde{r}_{n,d,\ell,m}(\lambda) := \begin{cases}
        \frac{\lambda^2}{2(n+1)(2n + d + 2)} + \frac{(4n+4)\lambda}{2(n+1)(2n + d +2)} + \frac{2n + 1}{2n + d + 2}, &d \geq 4,\ell=1, m=-1\\
        \frac{\lambda^2}{2(n+1)(2n+d+4)} + \frac{(4n + 6)\lambda}{2(n+1)(2n + d + 4)} + \frac{2n + 2}{2n + d +4},&d \geq 4,\ell=2,m=1\\
        \frac{\lambda^2}{2(n+1)(2n + d + 4)} + \frac{(4n + 6)\lambda}{2(n+1)(2n + d +4)} + \frac{2n + 3}{2n + d + 4},&d \geq 4,\ell=2,m=-2
    \end{cases}.
\end{align*}
Finally, the cases $d=4,5$ and $\ell \geq 3$
\begin{align*}
    \Tilde{r}_{n,4,\ell,\ell+d-2}(\lambda) &:= \frac{\lambda^2}{2(n+1)(2n + 2\ell + 4)} + \left( \frac{4n + 2\ell + 1}{2(n+1)(2n+2\ell+4)} + \frac{1}{2(n+1)(2n+4)} \right)\lambda \\
    &+\frac{3\ell -1}{2(7n+3)} + \frac{n-1}{n+1} \\
    \Tilde{r}_{n,4,\ell,1}(\lambda) &:= \frac{\lambda^2}{2(n+1)(2n + 2\ell + 4)} + \left( \frac{4n + 2\ell + 1}{2(n+1)(2n+2\ell+4)} + \frac{1}{2(n+1)(2n+4)} \right)\lambda \\
    &+\frac{3\ell }{2(7n+3)} + \frac{n-1}{n+1} \\
    \Tilde{r}_{n,4,\ell,-\ell}(\lambda) &:= \frac{\lambda^2}{2(n+1)(2n + 2\ell + 4)} + \left( \frac{4n + 2\ell + 1}{2(n+1)(2n+2\ell+4)} + \frac{1}{2(n+1)(2n+4)} \right)\lambda \\
    &+\frac{3\ell +2}{2(7n+3)} + \frac{n-1}{n+1}\\
    \Tilde{r}_{n,5,\ell,\ell+d-2}(\lambda) &:= \frac{\lambda^2}{2(n+1)(2n + 2\ell + 5)} + \left( \frac{4n + 2\ell + 1}{2(n+1)(2n+2\ell+5)} + \frac{1}{2(n+1)(2n+5)} \right)\lambda \\
    &+\frac{3\ell -1}{2(7n+3)} + \frac{n-1}{n+1}\\
    \Tilde{r}_{n,5,\ell,1}(\lambda) &:= \frac{\lambda^2}{2(n+1)(2n + 2\ell + 5)} + \left( \frac{4n + 2\ell + 1}{2(n+1)(2n+2\ell+5)} + \frac{1}{2(n+1)(2n+5)} \right)\lambda \\
    &+\frac{3\ell }{2(7n+3)} + \frac{n-1}{n+1}\\
    \Tilde{r}_{n,5,\ell,-\ell}(\lambda) &:= \Tilde{r}_{n,5,\ell,1}(\lambda) .
\end{align*}
Observe that in these cases there was only one additional parameter, either $d$ \emph{or} $\ell$, but not both at the same time. These expressions were found in a similar way as in previous implementations of this method. One starts with \eqref{lambda asymp of rn} and then tries to find a suitable constant term, see \cite{GloSch21} for more details. As can be seen here for $d=4,5$ and $\ell \geq 3$, sometimes one has to slightly tweak the coefficients of $\lambda^2$ and/or $\lambda$.

Now we come to the case $d \geq 6$ and $\ell \geq 3$. Here we expect the main difficulties since we now have $d$ and $\ell$ as two additional parameters. The quasi-solution method has not been implemented for two additional parameters before, so it is not clear how to proceed. It turns out that one can choose something more natural, although more complicated to analyze. We set 
\begin{align*}
    \Tilde{r}_{n,d,\ell,m}(\lambda) &:= A_{n-1,d,\ell,m}(\lambda) + \frac{1}{d-2} - \frac{5}{10n + 3d + 10} \\
    &= \frac{\lambda^2}{2(n+1)(2n+ 2\ell +d)} +\frac{(4n +2\ell + 1)\lambda}{2(n+1)(2n+ 2\ell +d)} + 1 - \frac{5}{10n + 3d + 10}\\
    &+\frac{(-2d^2 + 2d + 20)n + (d-2)\ell^2 + (-3d + 14)\ell - 2d^2 +4d - 4m +4}{2(d-2)(n+1)(2n + 2\ell + d)}
\end{align*}
for $d\geq 6$, $\ell \geq3$. We explain later, see \autoref{finding rt}, how one comes up with the idea of using $A_{n-1}$ here. Note that the first correction term $\frac{1}{d-2}$ is needed in order to guarantee $\lim_{n \to \infty} \Tilde{r}_{n}(\lambda) = 1$. 

We define
\begin{align}\label{def N}
    N(d,\ell,m):= \begin{cases}
        4, &d=4,\ell=1,m=1+4-2\\
        2,&d\geq5,\ell=1,m=1+d-2\\
        2,&d=4,\ell=1,m=1\\
        2,&d=4,\ell=2,m=2+d-2\\
        2,&d\geq4, \ell=1,m=-1\\
        1,&\text{else}
    \end{cases}.
\end{align}
This will be the starting case for various inductive arguments.
\begin{lem}\label{rt never vanishes}
We have $\Tilde{r}_{n,d,\ell,m}(\lambda) \not=0$ for all $d\geq 4$, $\ell\geq 1$, $m \in \{-\ell,1,\ell+d-2\}$, $n \geq N(d,\ell,m)$ and $\lambda \in \overline{\Half}$.
\end{lem}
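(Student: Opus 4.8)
The plan is to exploit that, in every one of the cases listed in the definition of $\Tilde r_{n,d,\ell,m}$, the map $\lambda\mapsto\Tilde r_{n,d,\ell,m}(\lambda)$ is a polynomial of degree exactly two with \emph{real} coefficients depending on $n,d,\ell,m$, say $\Tilde r_n(\lambda)=a_n\lambda^2+b_n\lambda+c_n$, whose leading coefficient $a_n$ is a positive rational. If one shows in addition that $b_n>0$ and $c_n>0$ throughout the stated parameter range, then by Vieta's formulas the two roots $\lambda_\pm$ of $\Tilde r_n$ satisfy $\lambda_++\lambda_-=-b_n/a_n<0$ and $\lambda_+\lambda_-=c_n/a_n>0$, which forces $\Real\lambda_\pm<0$ regardless of whether the roots are real or form a complex-conjugate pair; in particular $\Tilde r_n$ has no zero in $\overline{\Half}$ (the closed right half-plane $\{\Real\lambda\ge0\}$). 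Thus the lemma reduces to verifying $b_n>0$ and $c_n>0$ for $n\ge N(d,\ell,m)$.

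Positivity of $a_n$ and $b_n$ is read off directly from the explicit formulas: in each case the coefficient of $\lambda^2$ equals $\tfrac{1}{2(n+1)(\,\cdot\,)}$ with a positive denominator, and the coefficient of $\lambda$ is a sum of fractions with manifestly positive numerators and denominators (such as $\tfrac{4n+2\ell+1}{2(n+1)(2n+2\ell+d)}$, $\tfrac{1}{2(n+1)(2n+d)}$, and the like). So the entire content of the statement is the positivity of the constant term $c_n$.

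For the special cases, for the general case with $\ell\le2$, and for $d\in\{4,5\}$ with $\ell\ge3$, I would simply go through the finitely many explicit expressions for $c_n$: in each instance it is either a quotient of manifestly positive quantities (for example $\tfrac{4n+5}{2(2n+d+4)}$, $\tfrac{2n+1}{2n+d+2}$, $\tfrac{2n+2}{2n+d+4}$, $\tfrac{20n+33}{10(2n+9)}$) or of the shape $(\text{manifestly positive})+\tfrac{n-1}{n+1}$. Since $N(d,\ell,m)\ge1$ in all of these cases one has $n\ge1$, so $\tfrac{n-1}{n+1}\ge0$, while $\ell\ge3$ makes the relevant leading numerators $3\ell-1,\ 3\ell,\ 3\ell+2$ positive; hence $c_n>0$ in each case.

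The genuinely nontrivial case — which I expect to be the main obstacle — is $d\ge6$, $\ell\ge3$. Here $c_n=\tfrac{C_n}{2(n+1)(2n+2\ell+d)}$ with
\[
C_n=2(n+1)(2n+2\ell+d)\Bigl(1-\tfrac{5}{10n+3d+10}\Bigr)+P,\qquad
P:=\tfrac{(-2d^2+2d+20)n+(d-2)\ell^2+(-3d+14)\ell-2d^2+4d-4m+4}{d-2},
\]
and $C_n$ is not manifestly positive, since the coefficient of $n$ in $P$ and (for $m=\ell+d-2$) the term $-4m$ are negative. The plan is to bring out the favourable cancellations via the algebraic identity
\[
2(n+1)(2n+2\ell+d)+P=(2n+\ell)^2+2n+\ell+\tfrac{16n+8\ell+4-4m}{d-2},
\]
in which all terms of positive degree in $d$ have dropped out, so that
\[
C_n=(2n+\ell)^2+2n+\ell+\tfrac{16n+8\ell+4-4m}{d-2}-\tfrac{10(n+1)(2n+2\ell+d)}{10n+3d+10}.
\]
Since $d\ge6$ gives $10n+3d+10\ge3d$ and $d-2\ge4$, one bounds the last fraction above by $\tfrac{10(n+1)(2n+2\ell+d)}{3d}\le\tfrac{10(n+1)(n+\ell)}{9}+\tfrac{10(n+1)}{3}$ and checks that $\tfrac{16n+8\ell+4-4m}{d-2}>-4$, the only delicate case being $m=\ell+d-2$, where the numerator equals $16n+4\ell+12-4d>-4(d-2)$. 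What remains is the elementary inequality
\[
(2n+\ell)^2+2n+\ell-4-\tfrac{10(n+1)(n+\ell)}{9}-\tfrac{10(n+1)}{3}
=\tfrac{26}{9}n^2+\tfrac{26}{9}n\ell+\ell^2-\tfrac{22}{9}n-\tfrac19\ell-\tfrac{22}{3}>0\qquad(n\ge1,\ \ell\ge3),
\]
which holds because $\tfrac{26}{9}n^2-\tfrac{22}{9}n$, $\ell^2-\tfrac19\ell$ and $\tfrac{26}{9}n\ell$ are each positive and, for $n\ge1$, $\ell\ge3$, bounded below by constants whose sum comfortably exceeds $\tfrac{22}{3}$. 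Since $N(d,\ell,m)=1$ in this regime, this gives $C_n>0$, hence $c_n>0$, for all $n\ge N(d,\ell,m)$. I expect the bulk of the effort to lie in verifying the displayed algebraic identity and in the case bookkeeping over the various parameter families, whereas the structural step — the reduction to positivity of $a_n,b_n,c_n$ via Vieta — is soft.
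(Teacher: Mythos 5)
Your proof is correct and follows the same overall architecture as the paper's: reduce the nonvanishing of $\Tilde r_n$ on $\overline{\Half}$ to positivity of the three real coefficients of the quadratic in $\lambda$ (the paper invokes the degree-$2$ instance of Wall's criterion, \autoref{Wall degree 2 and 4}; you re-derive that same fact via Vieta, which is fine), observe that the leading and linear coefficients are manifestly positive, and concentrate on the constant term in the $d\geq 6$, $\ell\geq 3$ regime. Where you genuinely diverge is in how the hard constant term is handled. The paper expands $\Tilde r_{n,d,\ell,\ell+d-2}(0)$ into a single rational expression, shifts $(n,d,\ell)\mapsto(n+1,d+6,\ell+3)$, and verifies mechanically that the resulting numerator has nonnegative coefficients; monotonicity in $m$ then disposes of $m=1$ and $m=-\ell$. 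You instead isolate the identity
\[
2(n+1)(2n+2\ell+d)+P=(2n+\ell)^2+2n+\ell+\tfrac{16n+8\ell+4-4m}{d-2},
\]
which makes the cancellation of the $d$-dependent growth visible, and then close with a short chain of elementary bounds valid uniformly in $m$. I checked the identity and the subsequent inequalities (in particular the reduction to $\tfrac{26}{9}n^2+\tfrac{26}{9}n\ell+\ell^2-\tfrac{22}{9}n-\tfrac19\ell-\tfrac{22}{3}>0$ for $n\geq 1$, $\ell\geq 3$, and the bound $\tfrac{16n+8\ell+4-4m}{d-2}>-4$ in the worst case $m=\ell+d-2$) and they hold. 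Your version is more structured and reveals why positivity holds; the paper's is more automatic and better suited to computer verification of coefficient nonnegativity. Both are valid, and both handle the remaining (non-hard) cases by inspection of the explicit formulas with $n\geq N(d,\ell,m)\geq 1$.
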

\begin{proof}
Since $\Tilde{r}_{n,d,\ell,m}(\lambda)$ is a second degree polynomial in $\lambda$, with evidently positive leading coefficient, by \autoref{Wall degree 2 and 4} we only have to check that the other two coefficients are also positive. Evidently the coefficient of $\lambda$ is always positive.

Finally, for the constant coefficient, note that always $\ell \geq 1$ and $n \geq N(d,\ell,m) \geq 1$ and hence the constant term is evidently positive, except potentially in the case $d \geq 6$ and $\ell \geq 3$.

In the case $d \geq 6$ and $\ell \geq 3$ one first computes 
\begin{align*}
    \Tilde{r}_{n,d,\ell,\ell+d-2}(0) &= \frac{1}{2(d-2)(n+1)(2n + 2\ell + d)(10n + 3d + 10)}\Big[40(d-2) n^3 \\
    &+[40(d-2)\ell + 12 d^2 + 16d + 80]n^2 \\
    &+ [10(d-2)\ell^2 + (12d^2 + 6d - 20)\ell - 4d^2 + 16d + 280 ]n\\
    &+(d-2)(3d+ 10)\ell^2 + (3d^2 - 4d + 60)\ell -22d^2 + 16d + 120  \Big].
\end{align*}
The denominator is evidently positive. If one replaces $d$ by $d+6$ and $\ell$ by $\ell+3$ in the numerator, one obtains a polynomial in $n,d,\ell$ with nonnegative coefficients and positive constant term. This yields in particular that 
\begin{align*}
    \Tilde{r}_{n,d,\ell,\ell+d-2}(0) > 0
\end{align*}
for all $n \geq 1$, $d \geq 6$ and $\ell \geq 3$. The cases $m =1$ and $m=-\ell$ follow from this, since $m \mapsto \Tilde{r}_{n,d,\ell,m}(0)$ is monotonically decreasing.
\end{proof}
Next, we prove crucial analytic properties so that we can apply tools from complex analysis. 
Here and in the following we use the notation
\[ \Half:=\{\lambda\in\C: \Real\lambda>0\}. \]
\begin{lem}\label{r and rt holo}
The functions 
\begin{align*}
    r_{N(d,\ell,m),d,\ell,m},\frac{1}{\Tilde{r}_{n,d,\ell,m}}:\overline{\Half} \to \C
\end{align*}
are continuous and holomorphic in $\Half$ for all $d\geq 4$, $\ell\geq 1$, $m \in \{-\ell,1,\ell+d-2\}$ and $n \geq N(d,\ell,m)$.
\end{lem}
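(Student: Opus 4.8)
The plan is to realize both families of functions as quotients of entire functions of $\lambda$ whose denominators do not vanish on $\overline{\Half}$, so that continuity on $\overline{\Half}$ and holomorphy on $\Half$ follow at once. \emph{For the quasi-solution:} in every case listed above the explicit formula for $\Tilde r_{n,d,\ell,m}$ exhibits $\lambda\mapsto\Tilde r_{n,d,\ell,m}(\lambda)$ as a polynomial in $\lambda$ of degree two whose coefficients do not depend on $\lambda$ --- for $d\geq6$, $\ell\geq3$ this is read off from $\Tilde r_{n,d,\ell,m}=A_{n-1,d,\ell,m}+\tfrac1{d-2}-\tfrac5{10n+3d+10}$ together with the fact that $A_{n-1,d,\ell,m}$ is already such a polynomial. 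Hence $\Tilde r_{n,d,\ell,m}$ is entire, and by \autoref{rt never vanishes} it has no zero in $\overline{\Half}$ once $n\geq N(d,\ell,m)$; therefore $1/\Tilde r_{n,d,\ell,m}$ is holomorphic away from the finite zero set of $\Tilde r_{n,d,\ell,m}$, which is disjoint from $\overline{\Half}$, in particular continuous on $\overline{\Half}$ and holomorphic on $\Half$.

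\emph{For $r_N$:} inspecting \eqref{An special cases}, \eqref{An general case} and \eqref{Bn all cases}, the denominators appearing in $A_n(\lambda)$ and $B_n(\lambda)$ are independent of $\lambda$, so $\lambda\mapsto A_n(\lambda)$ and $\lambda\mapsto B_n(\lambda)$ are polynomials in $\lambda$; since $a_{-1}\equiv0$ and $a_0\equiv1$, the recursion \eqref{a recurr rel} then shows by induction that each $a_n(\lambda)$ is a polynomial in $\lambda$ of degree $2n$ with positive leading coefficient, in particular entire and not identically zero. Thus $r_{N}=a_{N+1}/a_{N}$ is a quotient of entire functions, and the lemma reduces to the claim that $a_{N(d,\ell,m)}(\lambda)\neq0$ for every $\lambda\in\overline{\Half}$.

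I would establish this claim case by case using the Hurwitz-type criterion \autoref{Wall degree 2 and 4}. When $N=1$ --- the generic situation --- $a_1=A_{-1}$ is an explicit quadratic in $\lambda$ with positive leading coefficient; one checks, using $\ell\geq1$, $m\in\{-\ell,1,\ell+d-2\}$ and the case distinctions of \eqref{An special cases} (so that, whenever the generic formula \eqref{An general case} is in force and $m=\ell+d-2$, one has $\ell\geq3$), that the coefficient of $\lambda$ and the constant term of $a_1$ are positive as well, and the degree-two part of \autoref{Wall degree 2 and 4} then places all zeros of $a_1$ in the open left half-plane. When $N=2$ --- the cases $(d\geq5,\ell=1,m=1+d-2)$, $(d\geq4,\ell=1,m=-1)$, $(d=4,\ell=1,m=1)$ and $(d=4,\ell=2,m=2+d-2)$ --- the relevant object is the explicit quartic $a_2=A_0A_{-1}+B_0$; for the first two families (coefficients depending on $d$) one verifies the sign conditions of the degree-four part of \autoref{Wall degree 2 and 4}, while in the last two (numerical coefficients) one may instead factor $a_2$ and read off that its roots lie in the open left half-plane. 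Finally, for the single case with $N=4$, namely $(d,\ell,m)=(4,1,3)$, the dimension is fixed, so $a_4=A_2a_3+B_2a_2$ is a concrete polynomial of degree eight in $\lambda$ with positive coefficients, and one checks directly, e.g.\ by computing its Hurwitz determinants, that it has no zero in $\overline{\Half}$.

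The genuinely computational step --- and the main obstacle --- is this last degree-eight Hurwitz verification for $(d,\ell,m)=(4,1,3)$; to a lesser extent, one also has to make sure that the degree-four sign conditions of \autoref{Wall degree 2 and 4} hold uniformly in $d$ for the two $d$-dependent families with $N=2$. All the remaining work is routine bookkeeping once the criterion \autoref{Wall degree 2 and 4} is available.
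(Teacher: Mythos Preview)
Your approach is essentially the same as the paper's. Both reduce the claim for $1/\Tilde r_n$ to \autoref{rt never vanishes}, and for $r_N$ both reduce to showing that a polynomial of degree $2N$ (which you call $a_N$ and the paper calls the numerator of $r_{N-1}$; these coincide up to a positive constant) has no zeros in $\overline{\Half}$, using \autoref{Wall degree 2 and 4} for the degree $\leq4$ cases and the full Routh--Hurwitz/Wall criterion for the single degree-$8$ case $(d,\ell,m)=(4,1,3)$; the paper carries the latter out explicitly by computing the eight Wall continued-fraction coefficients $c_1,\dots,c_8$ and checking they are positive.
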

\begin{proof}
These are rational functions, so it suffices to check that they do not have poles on $\overline{\Half}$. Hence the statement for $\frac{1}{\Tilde{r}_{n}}$ is exactly \autoref{rt never vanishes}.

On the other hand, by Eq.~\eqref{r recurr rel} we have
\begin{align*}
    r_{N(d,\ell,m)}(\lambda) = A_{N(d,\ell,m)-1}(\lambda) + \frac{B_{N(d,\ell,m)-1}(\lambda)}{r_{N(d,\ell,m)-1}(\lambda)},
\end{align*}
provided that $r_{N(d,\ell,m)-1}(\lambda) \not=0$. Since $A_{N(d,\ell,m)-1},B_{N(d,\ell,m)-1}$ are just polynomials, it suffices to prove that the rational function $r_{N(d,\ell,m)-1}$ does not vanish on $\overline{\Half}$. Since $r_n$ can be written as a quotient $\frac{p_n}{q_n}$, where $p_n,q_n$ are polynomials with $\deg(p_n) =  2(n+1)$ and $\deg(q_n) = 2n$, the numerator of $r_{N(d,\ell,m)-1}$ will be a polynomial of degree (at most) $2 N(d,\ell,m)$. Hence except for the case $d=4,\ell=1,m=1+4-2$, this is a polynomial of degree 2 or 4. So one can apply \autoref{Wall degree 2 and 4} and one checks in each case that the conditions are fulfilled.

For $d=4,\ell=1,m=1+4-2$, we have $N(4,1,1+4-2) = 4$, so one has to check that the numerator of
\begin{align*}
    r_3(\lambda ) &= \frac{\lambda^8 + 44 \lambda^7 + 802\lambda^6 + 7832\lambda^5 + 44497\lambda^4 + 149708 \lambda^3 + 284172 \lambda^2 + 253680\lambda + 95616}{112(\lambda^6 + 27 \lambda^5 + 277 \lambda^4 + 1341 \lambda^3 + 3202 \lambda^2 + 3744 \lambda + 768)}
\end{align*}
has no zeros on $\overline{\Half}$. We will use Wall's criterion, see \autoref{Wall}, so we have to expand 
\begin{align*}
    \frac{ 44 \lambda^7  + 7832\lambda^5  + 149708 \lambda^3  + 253680\lambda }{\lambda^8 + 44 \lambda^7 + 802\lambda^6 + 7832\lambda^5 + 44497\lambda^4 + 149708 \lambda^3 + 284172 \lambda^2 + 253680\lambda + 95616}
\end{align*}
into a continued fraction. One computes the coefficients (with notation as in \autoref{Wall})
\begin{align*}
        c_1 &= \frac{1}{44}\\
        c_2 &= \frac{11}{156}\\
        c_3 &= \frac{1352}{10691}\\
        c_4 &= \frac{1257272291}{6279558844}\\
        c_5 &= \frac{14583158015998009}{47686278024425370}\\
        c_6 &= \frac{913465855584827404205}{2012154184581448576794}\\
        c_7 &= \frac{138816450390479914710584802}{144189564446831042990725115}\\
        c_8 &= \frac{10667746155294185}{5531890170247464},
\end{align*}
which are all positive, which implies the claim.
\end{proof}
With these preparations at hand, we can define auxiliary functions, for which we will later prove bounds.
\begin{defin}
We define for $n \geq N(d,\ell,m)$
\begin{align*}
    \delta_n(\lambda) &:= \frac{r_n(\lambda)}{\Tilde{r}_{n}(\lambda)} - 1\\
    C_n(\lambda) &:= \frac{B_n(\lambda)}{\Tilde{r}_n(\lambda) \Tilde{r}_{n+1}(\lambda)}\\
    \eps_n(\lambda) &:= \frac{A_n(\lambda) \Tilde{r}_n(\lambda) + B_n(\lambda)}{\Tilde{r}_{n}(\lambda) \Tilde{r}_{n+1}(\lambda)} - 1.
\end{align*}
\end{defin}
The idea of analyzing the properties of these functions comes from \cite{CosDonXia16}.
Note that $C_n(\lambda),\eps_n(\lambda)$ are well-defined, by \autoref{rt never vanishes}, and explicit. On the other hand, $\delta_n(\lambda)$ is only well-defined if $r_n(\lambda)$ is, which itself is defined by a recursion, so $\delta_{n}(\lambda)$ is not explicit but at least inherits a recurrence relation.
\begin{lem}\label{delta recur}
Let $n \geq N(d,\ell,m)$ and $\delta_n(\lambda)$ be well-defined with $1 + \delta_n(\lambda) \not=0$. Then $\delta_{n+1}(\lambda)$ is well-defined and
\begin{align*}
    \delta_{n+1}(\lambda) &= \eps_n(\lambda) - C_n(\lambda) \frac{\delta_n(\lambda)}{1 + \delta_n(\lambda)}.
\end{align*}
\end{lem}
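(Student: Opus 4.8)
The plan is to derive the recurrence for $\delta_{n+1}$ directly from the recurrence \eqref{r recurr rel} for $r_n$ together with the definitions of $C_n$ and $\eps_n$. First I would note that since $n \geq N(d,\ell,m)$ and $\Tilde{r}_n(\lambda)\not=0$ by \autoref{rt never vanishes}, and since $\delta_n(\lambda)$ is assumed well-defined with $1+\delta_n(\lambda)\not=0$, we have $r_n(\lambda) = \Tilde{r}_n(\lambda)(1+\delta_n(\lambda))\not=0$; hence Eq.~\eqref{r recurr rel} is applicable and gives $r_{n+1}(\lambda) = A_n(\lambda) + B_n(\lambda)/r_n(\lambda)$, which in particular shows $r_{n+1}(\lambda)$ is well-defined (as an element of $\C\cup\{\infty\}$, but we will see it is finite and moreover that $\delta_{n+1}$ makes sense because $\Tilde{r}_{n+1}(\lambda)\not=0$, again by \autoref{rt never vanishes}).

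Next I would compute $\delta_{n+1}(\lambda) = r_{n+1}(\lambda)/\Tilde{r}_{n+1}(\lambda) - 1$ by substituting the expression for $r_{n+1}$ and for $r_n = \Tilde{r}_n(1+\delta_n)$:
\begin{align*}
    \delta_{n+1}(\lambda) &= \frac{1}{\Tilde{r}_{n+1}(\lambda)}\left(A_n(\lambda) + \frac{B_n(\lambda)}{\Tilde{r}_n(\lambda)(1+\delta_n(\lambda))}\right) - 1.
\end{align*}
Then I would split the $B_n$ term using the algebraic identity $\frac{1}{1+\delta_n} = 1 - \frac{\delta_n}{1+\delta_n}$, so that
\begin{align*}
    \delta_{n+1}(\lambda) &= \frac{A_n(\lambda)\Tilde{r}_n(\lambda) + B_n(\lambda)}{\Tilde{r}_n(\lambda)\Tilde{r}_{n+1}(\lambda)} - 1 - \frac{B_n(\lambda)}{\Tilde{r}_n(\lambda)\Tilde{r}_{n+1}(\lambda)}\cdot\frac{\delta_n(\lambda)}{1+\delta_n(\lambda)}.
\end{align*}
Recognizing the first two terms as $\eps_n(\lambda)$ and the coefficient in the last term as $C_n(\lambda)$ by their definitions immediately yields the claimed identity $\delta_{n+1}(\lambda) = \eps_n(\lambda) - C_n(\lambda)\frac{\delta_n(\lambda)}{1+\delta_n(\lambda)}$.

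This is essentially a routine algebraic manipulation, so there is no real obstacle; the only points requiring a word of care are the bookkeeping about which quantities are well-defined. Specifically, one must ensure division by $\Tilde{r}_n(\lambda)$, $\Tilde{r}_{n+1}(\lambda)$, and $1+\delta_n(\lambda)$ is legitimate: the first two are guaranteed by \autoref{rt never vanishes} (using $n, n+1 \geq N(d,\ell,m)$), and the last is the hypothesis. From $r_n(\lambda) = \Tilde{r}_n(\lambda)(1+\delta_n(\lambda)) \not= 0$ it then follows that $r_{n+1}(\lambda)$ is a finite complex number via Eq.~\eqref{r recurr rel}, so $\delta_{n+1}(\lambda) = r_{n+1}(\lambda)/\Tilde{r}_{n+1}(\lambda) - 1$ is well-defined, completing the proof.
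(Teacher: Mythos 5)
Your proposal is correct and carries out precisely the ``straightforward calculation'' the paper refers to: substituting $r_n = \Tilde r_n(1+\delta_n)$ into Eq.~\eqref{r recurr rel}, splitting $1/(1+\delta_n)$ as $1 - \delta_n/(1+\delta_n)$, and matching against the definitions of $\eps_n$ and $C_n$, with the well-definedness bookkeeping handled via \autoref{rt never vanishes}.
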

\begin{proof}
This is a straightforward calculation using the recurrence relation \eqref{r recurr rel}.
\end{proof}
We will prove bounds on $\delta_n$ using this recurrence relation. This also requires bounding $C_n$ and $\eps_n$, see \autoref{bounds on imag axis} below. 
\begin{rem}\label{finding rt}
Taking a closer look at $\eps_n$, we observe
\begin{align*}
    \eps_n(\lambda) &=  \frac{(A_n(\lambda) - \Tilde{r}_{n+1}(\lambda)) \Tilde{r}_n(\lambda) + B_n(\lambda) }{\Tilde{r}_n(\lambda) \Tilde{r}_{n+1}(\lambda)} = \frac{(A_n(\lambda) - \Tilde{r}_{n+1}(\lambda)) \Tilde{r}_n(\lambda) }{\Tilde{r}_n(\lambda) \Tilde{r}_{n+1}(\lambda)} + C_n(\lambda).
\end{align*}
If the first summand is small enough, then the sum is small as long as $C_n(\lambda)$ is small. Thus, the question is how one can make the first summand small. Clearly the choice $\Tilde{r}_n(\lambda) = A_{n-1}(\lambda)$ would be the best in that regard. As mentioned earlier, one expects $A_{n-1}(\lambda) + \frac{1}{d-2}$ to be a better approximation, since this now converges to 1, as $n \to \infty$.

With this starting point, one makes the ansatz 
\begin{align*}
    \Tilde{r}_n(\lambda) = A_{n-1}(\lambda) + \frac{1}{d-2} + E_{n,d,\ell,m}
\end{align*}
and one finds, in a surprisingly straightforward fashion, that
\begin{align*}
    E_{n,d,\ell,m} = - \frac{5}{10n + 3d + 10}
\end{align*}
works, at least for $d \geq 6$ and $\ell \geq 3$.
\end{rem}
\begin{lem}\label{bounds on imag axis}
The bounds
\begin{align*}
    |\delta_{N(d,\ell,m)}(it )| &\leq \alpha_{d,\ell,m}\\
    |C_{n}(i t )| &\leq \beta_{n,d,\ell,m}\\
    |\eps_{n}(i t)| &\leq \gamma_{n,d,\ell,m}
\end{align*}
hold for all $d \geq 4$, $\ell \geq 1$, $m \in \{-\ell,1,\ell+d-2\}$, $n \geq N(d,\ell,m)$ and $t \in \R$, where
\begin{align}\label{choice of bounds}
    (\alpha_{d,\ell,m},\beta_{n,d,\ell,m},\gamma_{n,d,\ell,m}) &:=\begin{cases}
        \left( \frac{1}{3},\frac{1}{2},\frac{1}{12} \right), & \begin{cases}
            d \geq 4, \ell=1,m=1+4-2\\
            d \geq 5, \ell=1,m=1\\
            d \geq 4, \ell=2,m=2+d-2\\
            d = 4, \ell=1,m=-1\\
            d \geq 4, \ell=2,m=1\\
            d=4,\ell=2,m=-2
        \end{cases}\\
        \left( \frac{1}{4},\frac{11}{20},\frac{1}{15} \right), & d=4,\ell=1,m=1\\
        \left( \frac{1}{3},\frac{1}{3},\frac{1}{6} \right), &\begin{cases}
            d \geq 5,\ell=1,m=-1\\
            d \geq 5,\ell=2,m=-2
        \end{cases}\\
        \left( \frac{1}{3},\frac{1}{2} - \frac{\ell}{6(\ell+n+1)},\frac{1}{12} + \frac{\ell}{12(\ell+n+1)} \right), &d= 4,\ell \geq 3\\
        \left( \frac{1}{2},\frac{1}{3},\frac{1}{6} \right), &d\geq 5,\ell\geq3
    \end{cases}.
\end{align}
\end{lem}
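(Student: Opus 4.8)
The plan is to turn each of the three families of estimates into the statement that an explicit real polynomial is nonnegative on a set of the form $\{s\ge 0\}$ times a product of the relevant discrete parameter ranges, and then to settle these statements by a shift of variables followed by a coefficient inspection, exactly in the spirit of \autoref{rt never vanishes}. For the reduction, recall that by \autoref{rt never vanishes} we have $\Tilde{r}_n(it)\ne 0$ and $\Tilde{r}_{n+1}(it)\ne 0$ for all admissible $n$ and all $t\in\R$, so $C_n(it)$ and $\eps_n(it)$ are finite; by \autoref{r and rt holo}, $r_{N(d,\ell,m)}(it)$ is finite, so $\delta_{N(d,\ell,m)}(it)$ is well-defined, and unwinding \eqref{r recurr rel} it is an explicit rational function of $it$ with real coefficients, as are $C_n(it)$ and $\eps_n(it)$ since $A_n,B_n,\Tilde{r}_n$ have real coefficients in $\lambda$. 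For any rational function $R=P/Q$ with real coefficients one has $|R(it)|^2=\frac{P(it)P(-it)}{Q(it)Q(-it)}$, where $P(it)P(-it)$ and $Q(it)Q(-it)$ are polynomials in $s:=t^2$ with real coefficients, so $|R(it)|\le c$ for all $t\in\R$ is equivalent to
\[
    c^2\,Q(it)Q(-it)-P(it)P(-it)\ge 0\qquad\text{for all }s\ge 0 .
\]
Taking $(R,c)$ to be $(\delta_{N(d,\ell,m)},\alpha_{d,\ell,m})$, $(C_n,\beta_{n,d,\ell,m})$ and $(\eps_n,\gamma_{n,d,\ell,m})$ in turn reduces the lemma to a finite list of polynomial inequalities; the $n$- and $\ell$-dependent bounds occurring in the case $d=4,\ell\ge 3$ merely contribute extra polynomial factors (such as $(3n+2\ell+3)^2$ and $(\ell+n+1)^2$) after their denominators are cleared, so they are of the same kind.

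In every case except $d\ge 6,\ell\ge 3$ these polynomials involve $s$ together with only the varying members of $\{n,d,\ell\}$ and, when $\ell\ge 3$, the parameter $m\in\{-\ell,1,\ell+d-2\}$. Since $m$ enters $A_n,B_n,\Tilde{r}_n$ affinely — in fact through at most one $\lambda$-independent coefficient — one checks, exactly as for $\Tilde{r}_n(0)$ in \autoref{rt never vanishes}, that the left-hand side is monotone in $m$, so it suffices to treat the extreme admissible value of $m$. After substituting $n\mapsto n+N(d,\ell,m)$ and shifting each remaining integer parameter to its minimal admissible value, the resulting polynomial in the now-nonnegative variables $s,n,d,\ell$ has only nonnegative coefficients, which proves the claim. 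This is mechanical but lengthy, handled case by case, and it is the part that mirrors previous implementations of the method.

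The remaining case $d\ge 6,\ell\ge 3$, where $\Tilde{r}_n=A_{n-1}+\frac1{d-2}-\frac{5}{10n+3d+10}$ and both $d$ and $\ell$ are free, is the genuinely new one. The reduction still applies, but the polynomials in $(s,n,d,\ell,m)$ are now of large degree, so I would first lower the degree using the structure noted in \autoref{finding rt}: from $\Tilde{r}_{n+1}=A_n+\frac1{d-2}-\frac{5}{10n+3d+20}$ one gets that $A_n-\Tilde{r}_{n+1}=-\frac1{d-2}+\frac{5}{10n+3d+20}$ is independent of $\lambda$, hence
\[
    \eps_n=\frac{-\frac1{d-2}+\frac{5}{10n+3d+20}}{\Tilde{r}_{n+1}}+C_n ,\qquad C_n=\frac{B_n}{\Tilde{r}_n\,\Tilde{r}_{n+1}} .
\]
Each of $A_n(it),B_n(it),\Tilde{r}_n(it),\Tilde{r}_{n+1}(it)$ is a real quadratic evaluated at $it$, hence has modulus squared an explicit quadratic polynomial in $s$, with $|\Tilde{r}_n(it)|^2$ and $|\Tilde{r}_{n+1}(it)|^2$ taking the positive values $\Tilde{r}_n(0)^2$ and $\Tilde{r}_{n+1}(0)^2$ at $s=0$ (\autoref{rt never vanishes}). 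Thus $|C_n(it)|\le\tfrac13$ and $|\eps_n(it)|\le\tfrac16$ turn, after fixing $m$ at its extreme value, into nonnegativity of explicit polynomials in $(s,n,d,\ell)$, and one concludes by the shift $n\mapsto n+1$, $d\mapsto d+6$, $\ell\mapsto\ell+3$ and a coefficient inspection.

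I expect the real obstacle to be exactly this last step in the case $d\ge6,\ell\ge3$: after clearing denominators and shifting, one faces multivariate polynomials in $(s,n,d,\ell)$ of substantial degree whose coefficients are themselves polynomials of degree three or more in $(n,d,\ell)$, and it is not automatic that they all come out nonnegative. One will likely have to choose the intermediate bounds with care — the precise split of $\eps_n$, the lower bound used for $|\Tilde{r}_{n+1}(it)|^2=as^2-bs+c$ (its minimum over $s\ge0$ being $c$ if $b\le0$ and $c-b^2/(4a)$ if $b>0$, with $a,c>0$), and the way the $m$-dependence is discharged — so that the final polynomials are manifestly, and not merely, nonnegative. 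Everything else is routine verification.
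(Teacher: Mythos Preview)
Your approach is essentially the paper's: reduce each bound to the nonnegativity of an explicit real polynomial in $s=t^2$ (with polynomial dependence on the integer parameters), shift the parameters to their minimal admissible values, and verify that all coefficients are nonnegative. Three points of divergence are worth flagging.

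First, the paper does not use monotonicity in $m$; it simply treats each of the three values $m\in\{-\ell,1,\ell+d-2\}$ as a separate case. Your monotonicity claim is not justified as stated: even though $m$ enters $\Tilde r_n(\lambda)$ only through its $\lambda$-independent term $c(m)$, one has $|\Tilde r_n(it)|^2=(c(m)-at^2)^2+b^2t^2$, which is quadratic in $c(m)$ and hence not monotone in $m$ for all $t$. The product $|\Tilde r_n(it)|^2|\Tilde r_{n+1}(it)|^2$ appearing in the denominator of $|C_n(it)|^2$ is then quartic in $m$, so the inequality $\beta^2|\Tilde r_n\Tilde r_{n+1}|^2-|B_n|^2\ge 0$ has no reason to be monotone in $m$. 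The fix is trivial (run the coefficient check three times), and for $d=4,5$, $\ell\ge3$ the issue does not even arise in this form, since there the quasi-solutions are defined case by case for each $m$ rather than through a single $m$-dependent formula.

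Second, in the case $d\ge6,\ell\ge3$ the paper does not decompose $\eps_n$; it forms the full polynomial $\gamma^2|Q(it)|^2-|P(it)|^2$ directly and checks coefficient positivity after the shift $(n,d,\ell)\mapsto(n+1,d+6,\ell+3)$ (the resulting polynomials are large and are supplied as data files). Your split $\eps_n=(A_n-\Tilde r_{n+1})/\Tilde r_{n+1}+C_n$ cannot be combined via the triangle inequality, since $|C_n|\le\tfrac13$ already exceeds the target $\tfrac16$; if you intend it only as an algebraic simplification of the rational function $\eps_n$ before forming $P,Q$, that is harmless but not how the paper proceeds.

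Third, the paper records one exception ($d=4,\ell=1,m=-1$, the bound on $\delta_2$) in which a single negative coefficient appears after the shift; there the inequality is salvaged by an elementary discriminant check on the low-degree part. Your proposal does not anticipate such exceptions.
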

\begin{proof}
The functions $\delta_{N(d,\ell,m)},C_n,\eps_n$ are rational functions in $\lambda$ with real coefficients. So one can write them as $\frac{P}{Q}$, where $P,Q$ are polynomials with real coefficients. Note that for $a,b > 0$ the bound
\begin{align*}
    \left| \frac{P(i t)}{Q(i t)} \right| \leq \frac{a}{b} \quad\forall t \in\R
\end{align*}
is equivalent to 
\begin{align*}
    a^2 |Q(i t)|^2 - b^2 |P(i t)|^2\geq 0,\quad \forall t \in \R.
\end{align*}
Observe that both $|P(i t)|^2$ and $|Q(it)|^2$ are even in $t$ and hence must be polynomials in $t^2$. Hence a sufficient condition for such an inequality to hold is that the coefficients of $a^2 |Q(i t)|^2 - b^2 |P(it)|^2$ are nonnegative.

We have the additional problem that the coefficients will depend on $n,d,\ell,m$. For $m$ one can go through each of the three cases $m = -\ell,1,\ell+d-2$ separately, which makes the coefficients only depend on $n,d,\ell$. These coefficients are now themselves polynomials in $n,d,\ell$. Then one shifts the parameter to cover the range one wants to check. So for example, in the general case one wants to check a bound for all $n \geq 1$, $d \geq 6$ and $ \ell\geq 3$, hence one replaces $(n,d,\ell)$ by $(n+1,d+6,\ell+3)$. The coefficients of \emph{that} polynomial then are all nonnegative, which proves the desired bound.

If one goes through all of the cases as described above, one will find that all coefficients are nonnegative integers and hence the bounds hold, except for $d=4,\ell=1,m=-1$. When checking the bound $|\delta_{2,4,1,-1}(it)| \leq \frac{1}{3}$, one obtains the polynomial
\begin{align*}
    1843200 - 380160 t^2 + 606252 t^4 + 75391 t^6 + 2987 t^8 + 89 t^{10} + t^{12}.
\end{align*}
Even though there appears a minus sign here, one checks that already
\begin{align*}
    1843200 - 380160 t^2 + 606252 t^4 \geq 0
\end{align*}
for all $t \in \R$, and hence the corresponding bound also holds.

All of the here occurring polynomials in the various cases can be found in the attached files, see \autoref{Additional data files}.
\end{proof}
We can extend these bounds to $\overline{\Half}$.
\begin{lem}\label{bounds on half plane}
The bounds from \autoref{bounds on imag axis} can be extended to $\overline{\Half}$.
\begin{align*}
    |\delta_{N(d,\ell,m)}(\lambda)| &\leq \alpha_{d,\ell,m}\\
    |C_n(\lambda)| &\leq \beta_{n,d,\ell,m}\\
    |\eps_n(\lambda)| &\leq \gamma_{n,d,\ell,m}
\end{align*}
for all $\lambda \in \overline{\Half}$.
\end{lem}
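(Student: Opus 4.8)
The plan is to combine the boundary bounds of \autoref{bounds on imag axis} with a Phragm\'en--Lindel\"of argument on the (unbounded) closed half-plane $\overline{\Half}$. Fix one of the functions $g\in\{\delta_{N(d,\ell,m)},\,C_n,\,\eps_n\}$ and the corresponding constant bound $M\in\{\alpha_{d,\ell,m},\,\beta_{n,d,\ell,m},\,\gamma_{n,d,\ell,m}\}$; note that in every case of \eqref{choice of bounds} one has $M>0$. By \autoref{r and rt holo} (and, for $C_n$ and $\eps_n$, by \autoref{rt never vanishes} together with the fact that $A_n$, $B_n$, $\Tilde{r}_n$, $\Tilde{r}_{n+1}$ are polynomials), each such $g$ is a rational function of $\lambda$ having no poles in $\overline{\Half}$; in particular it is holomorphic in $\Half$ and continuous on $\overline{\Half}$.

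The key point is that $g(\lambda)\to 0$ as $|\lambda|\to\infty$ in $\overline{\Half}$. Indeed, in each case the quasi-solution $\Tilde{r}_n$ was constructed so that its $\lambda^2$-coefficient coincides with the leading coefficient of the rational function $r_n$ (as $\lambda\to\infty$, where $r_n(\lambda)\sim c_n\lambda^2$); consequently $r_{N}(\lambda)/\Tilde{r}_{N}(\lambda)\to 1$, so $\delta_{N}(\lambda)\to 0$. Since $B_n$ has degree $2$ in $\lambda$ while $\Tilde{r}_n\Tilde{r}_{n+1}$ has degree $4$, we get $C_n(\lambda)\to 0$. Finally, again by construction of $\Tilde{r}$, $A_n(\lambda)-\Tilde{r}_{n+1}(\lambda)$ stays bounded as $\lambda\to\infty$ (it is even constant in $\lambda$ in the case $d\geq 6,\ \ell\geq 3$), and therefore the identity $\eps_n=\frac{(A_n-\Tilde{r}_{n+1})\Tilde{r}_n}{\Tilde{r}_n\Tilde{r}_{n+1}}+C_n$ gives $\eps_n(\lambda)\to 0$. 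Alternatively, all three limits can simply be read off from the degrees of the numerators and denominators of the relevant rational functions, once these are computed from \eqref{lambda asymp of rn}, \eqref{An general case}, \eqref{Bn all cases} and the explicit formulas for the $\Tilde{r}_n$.

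With this at hand, fix $\eps\in(0,M)$ and choose $R>0$ so large that $|g(\lambda)|<\eps$ for all $\lambda\in\overline{\Half}$ with $|\lambda|\geq R$. Consider the compact half-disc $D_R:=\{\lambda\in\overline{\Half}:|\lambda|\leq R\}$, whose boundary is the union of the segment $[-iR,iR]$ and the semicircular arc $\{\lambda:|\lambda|=R,\ \Real\lambda\geq 0\}$. On the segment $|g|\leq M$ by \autoref{bounds on imag axis}, while on the arc $|g|<\eps<M$. Since $g$ is holomorphic in the interior of $D_R$ and continuous on $D_R$, the maximum modulus principle yields $|g(\lambda)|\leq M$ for all $\lambda\in D_R$. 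Letting $R\to\infty$ and using $\bigcup_{R>0}D_R=\overline{\Half}$ gives $|g(\lambda)|\leq M$ on all of $\overline{\Half}$, which is the assertion.

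The only substantive input is the asymptotic behavior $g(\lambda)\to 0$ as $|\lambda|\to\infty$; once that is established (indeed $\limsup_{|\lambda|\to\infty}|g(\lambda)|\leq M$ would suffice), the remainder is the routine maximum-modulus argument on exhausting half-discs. Accordingly, I expect the main obstacle to be the (essentially mechanical) verification of this limit across all the case distinctions of \eqref{choice of bounds}, in particular for the six special quasi-solutions arising after the supersymmetric removal, where the recurrence coefficients in \eqref{An special cases}--\eqref{Bn all cases} differ from the general formulas.
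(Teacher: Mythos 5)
Your proof is correct, but it takes a genuinely different (and slightly more elementary) route than the paper. The paper simply invokes the Phragm\'en--Lindel\"of principle (\autoref{phragmen lindelöf}): since $\delta_{N}$, $C_n$, $\eps_n$ are rational functions with no poles in $\overline{\Half}$ (by \autoref{r and rt holo} and \autoref{rt never vanishes}), they are automatically polynomially bounded on $\overline{\Half}$, hence satisfy the sub-exponential growth hypothesis, and PL then transfers the imaginary-axis bound to the whole closed half-plane with no further calculation. You instead avoid PL by verifying the stronger fact that each of the three functions tends to $0$ as $|\lambda|\to\infty$, and then run an exhaustion-by-half-discs maximum modulus argument. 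That verification is correct: the $\lambda^2$-coefficient of every quasi-solution $\Tilde{r}_n$ is $\frac{1}{2(n+1)(2n+2\ell+d)}$ (in the special cases \eqref{An special cases} the effective $\ell$ is shifted, and the coefficient of $A_{n-1}$ agrees), which matches the leading coefficient of $r_n$, so $\delta_N\to 0$; $B_n$ has degree $2$ and $\Tilde{r}_n\Tilde{r}_{n+1}$ degree $4$, so $C_n\to 0$; and the $\lambda^2$-coefficients of $A_n$ and $\Tilde{r}_{n+1}$ also agree, so $A_n-\Tilde{r}_{n+1}$ has degree at most $1$ in $\lambda$, giving $\eps_n=(A_n-\Tilde{r}_{n+1})/\Tilde{r}_{n+1}+C_n\to 0$. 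Do note, however, that your parenthetical ``$A_n-\Tilde{r}_{n+1}$ stays bounded as $\lambda\to\infty$'' is not literally true for the tweaked quasi-solutions in $d=4,5$, $\ell\geq 3$, where the $\lambda$-coefficients differ and $A_n-\Tilde{r}_{n+1}$ grows linearly; the conclusion still holds because $\Tilde{r}_{n+1}$ has degree $2$. In sum: the paper's argument is cheaper (PL does not require any asymptotic bookkeeping beyond ``no poles implies polynomial growth''), while yours replaces the specialized PL principle with the ordinary maximum modulus principle at the cost of checking degree counts in each case. Both are valid.
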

\begin{proof}
We know that $\delta_{N(d,\ell,m)},C_n,\eps_n$ are rational functions and by \autoref{bounds on imag axis} the bounds hold on the imaginary axis. Then \autoref{r and rt holo} implies that also $\delta_{N(d,\ell,m)},C_n,\eps_n$ are well-defined, continuous on $\overline{\Half}$ and holomorphic in $\Half$. This precisely means that they have no poles on $\overline{\Half}$. However, in this case they are trivially bounded polynomially and in particular exponentially. Applying the \nameref{phragmen lindelöf} yields the claim.
\end{proof}
Now we can finally prove the desired bound.
\begin{prop}
The bound
\begin{align}\label{delta small}
    |\delta_n(\lambda)| \leq \alpha_{d,\ell,m}
\end{align}
holds for all $d \geq 4$, $\ell \geq 1$, $m \in \{-\ell,1,\ell+d-2\}$, $n \geq N(d,\ell,m)$ and $\lambda \in \overline{\Half}$.
\end{prop}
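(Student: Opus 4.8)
The plan is to establish \eqref{delta small} by induction on $n \geq N(d,\ell,m)$, the inductive claim being that $\delta_n(\lambda)$ is well-defined and satisfies $|\delta_n(\lambda)| \leq \alpha_{d,\ell,m}$ for every $\lambda \in \overline{\Half}$. The base case $n = N(d,\ell,m)$ is nothing but the first estimate in \autoref{bounds on half plane} (together with \autoref{r and rt holo} and \autoref{rt never vanishes} for well-definedness). The point of the quasi-solution construction is precisely that the recurrence for $\delta_n$ in \autoref{delta recur} is contractive enough for this bound to propagate, so what is left is essentially a one-step fixed-point argument carried out uniformly in $n$.

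For the inductive step I would first note that in every case appearing in \eqref{choice of bounds} we have $\alpha_{d,\ell,m} < 1$, so the inductive hypothesis $|\delta_n(\lambda)| \leq \alpha_{d,\ell,m}$ forces $|1 + \delta_n(\lambda)| \geq 1 - \alpha_{d,\ell,m} > 0$. In particular $1 + \delta_n(\lambda) \neq 0$ — equivalently $r_n(\lambda) = \Tilde{r}_n(\lambda)\bigl(1+\delta_n(\lambda)\bigr) \neq 0$ by \autoref{rt never vanishes} — so \autoref{delta recur} applies and $\delta_{n+1}(\lambda)$ is well-defined with
\begin{align*}
    |\delta_{n+1}(\lambda)| \leq |\eps_n(\lambda)| + |C_n(\lambda)| \, \frac{|\delta_n(\lambda)|}{|1 + \delta_n(\lambda)|} \leq |\eps_n(\lambda)| + |C_n(\lambda)| \, \frac{|\delta_n(\lambda)|}{1 - |\delta_n(\lambda)|}.
\end{align*}
Since $t \mapsto t/(1-t)$ is increasing on $[0,1)$, inserting the bounds of \autoref{bounds on half plane} yields
\begin{align*}
    |\delta_{n+1}(\lambda)| \leq \gamma_{n,d,\ell,m} + \beta_{n,d,\ell,m}\, \frac{\alpha_{d,\ell,m}}{1 - \alpha_{d,\ell,m}},
\end{align*}
so the induction closes as soon as
\begin{align}\label{closing inequality}
    \gamma_{n,d,\ell,m} + \beta_{n,d,\ell,m}\, \frac{\alpha_{d,\ell,m}}{1 - \alpha_{d,\ell,m}} \leq \alpha_{d,\ell,m}
\end{align}
holds for all admissible $d,\ell,m$ and all $n \geq N(d,\ell,m)$.

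The remaining step is to verify \eqref{closing inequality}, and here the role of the particular constants in \eqref{choice of bounds} becomes transparent: \eqref{closing inequality} holds with \emph{equality} in each case. For the first block of cases, where $(\alpha,\beta,\gamma) = \left(\tfrac13,\tfrac12,\tfrac1{12}\right)$, one has $\tfrac{\alpha}{1-\alpha} = \tfrac12$ and $\tfrac1{12} + \tfrac12\cdot\tfrac12 = \tfrac13$; for $(\alpha,\beta,\gamma) = \left(\tfrac14,\tfrac{11}{20},\tfrac1{15}\right)$ one has $\tfrac{\alpha}{1-\alpha} = \tfrac13$ and $\tfrac1{15} + \tfrac13\cdot\tfrac{11}{20} = \tfrac14$; for $\left(\tfrac13,\tfrac13,\tfrac16\right)$ one has $\tfrac16 + \tfrac12\cdot\tfrac13 = \tfrac13$; for $d \geq 5,\ell\geq3$ one has $\tfrac{\alpha}{1-\alpha} = 1$ and $\tfrac16 + \tfrac13 = \tfrac12$; and in the case $d=4,\ell\geq3$ the $n$-dependent pieces cancel exactly, since $\left(\tfrac1{12} + \tfrac{\ell}{12(\ell+n+1)}\right) + \tfrac12\left(\tfrac12 - \tfrac{\ell}{6(\ell+n+1)}\right) = \tfrac1{12} + \tfrac14 = \tfrac13$. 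This completes the induction.

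I do not expect a genuine obstacle in this proposition; the substantial work was already carried out in \autoref{bounds on imag axis} (nonnegativity of the relevant coefficient polynomials) and in its extension to $\overline{\Half}$ via Phragmén–Lindelöf in \autoref{bounds on half plane}. The only points requiring care are: checking $\alpha_{d,\ell,m} < 1$ everywhere so that $1 - |\delta_n(\lambda)|$ stays bounded away from zero; keeping the index shift $n \mapsto n+1$ straight when invoking the bounds on $C_n$ and $\eps_n$; and — the one mildly delicate observation — retaining the $n$-dependence of $\beta_{n,d,\ell,m}$ and $\gamma_{n,d,\ell,m}$ in the $d=4,\ell\geq3$ regime, where it cancels precisely in \eqref{closing inequality}.
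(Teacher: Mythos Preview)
Your proposal is correct and follows essentially the same inductive argument as the paper's proof; the paper is slightly terser, noting $\alpha_{d,\ell,m}\leq\tfrac12$ and leaving the verification of $\gamma_{n,d,\ell,m}+\beta_{n,d,\ell,m}\,\tfrac{\alpha_{d,\ell,m}}{1-\alpha_{d,\ell,m}}=\alpha_{d,\ell,m}$ as ``readily verified'', whereas you spell out each case explicitly.
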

\begin{proof}
We prove \eqref{delta small} by induction over $n$. The base case $n = N(d,\ell,m)$ is \autoref{bounds on half plane}.

Now assume that \eqref{delta small} holds for some $n \geq N(d,\ell,m)$. Since $\alpha_{d,\ell,m} \leq \frac{1}{2}$, we conclude 
\begin{align*}
    |1 + \delta_{n}(\lambda)| \geq 1 - |\delta_n(\lambda)| \geq 1 - \frac{1}{2} = \frac{1}{2}
\end{align*}
and in particular this means $1 + \delta_n(\lambda) \not=0$. Then \autoref{delta recur} implies that $\delta_{n+1}(\lambda)$ is well-defined and satisfies
\begin{align*}
    \delta_{n+1}(\lambda) = \eps_n(\lambda) - C_n(\lambda) \frac{\delta_n(\lambda)}{1 + \delta_n(\lambda)}.
\end{align*}
By using the induction hypothesis and \autoref{bounds on half plane}, we get
\begin{align*}
    |\delta_{n+1}(\lambda)| &\leq |\eps_n(\lambda)| + |C_n(\lambda)| \left|\frac{\delta_n(\lambda)}{1 + \delta_n(\lambda)}\right|\leq |\eps_n(\lambda)| + |C_n(\lambda)| \frac{|\delta_n(\lambda)|}{1 - |\delta_n(\lambda)|}\\
    &\leq \gamma_{n,d,\ell,m} + \beta_{n,d,\ell,m}\frac{\alpha_{d,\ell,m}}{1 - \alpha_{d,\ell,m}} = \alpha_{d,\ell,m},
\end{align*}
where the last equality is readily verified for the various choices of $(\alpha_{d,\ell,m},\beta_{n,d,\ell,m},\gamma_{n,d,\ell,m})$ in \eqref{choice of bounds}.
\end{proof}
This bound is good enough to conclude the desired asymptotic behavior of $r_n$.
\begin{prop}
Let $d \geq 4$, $\ell \geq 1$, $m \in \{-\ell,1,\ell+d-2\}$ and $\lambda \in \overline{\Half}$. Then 
\begin{align*}
    \lim_{n \to \infty} r_{n}(\lambda) &= 1.
\end{align*}
\end{prop}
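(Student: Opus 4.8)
The plan is to show separately that $\Tilde{r}_{n,d,\ell,m}(\lambda)\to 1$ and $\delta_n(\lambda)\to 0$ as $n\to\infty$, and then to read off the conclusion from the identity $r_n(\lambda)=\Tilde{r}_n(\lambda)\bigl(1+\delta_n(\lambda)\bigr)$, which is nothing but the definition of $\delta_n$. Here $\lambda\in\overline{\Half}$ is fixed throughout, and since \eqref{delta small} forces $\delta_n(\lambda)$, hence $r_n(\lambda)$, to be well defined for all $n\ge N(d,\ell,m)$, everything below makes sense.

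First I would collect the relevant asymptotics. Reading off the explicit formulas for $\Tilde{r}_{n,d,\ell,m}$ in each of the listed cases, the coefficients of $\lambda^2$ and of $\lambda$ tend to $0$ while the constant term tends to $1$, so $\Tilde{r}_{n,d,\ell,m}(\lambda)\to 1$ for every fixed $\lambda$. Combining this with the limits $\lim_{n\to\infty}A_n(\lambda)=\tfrac{d-3}{d-2}$ and $\lim_{n\to\infty}B_n(\lambda)=\tfrac{1}{d-2}$ computed in the proof of \autoref{ratios converge}, the definitions of $C_n$ and $\eps_n$ give $\lim_{n\to\infty}C_n(\lambda)=\tfrac{1}{d-2}$ and $\lim_{n\to\infty}\eps_n(\lambda)=\bigl(\tfrac{d-3}{d-2}+\tfrac{1}{d-2}\bigr)-1=0$.

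Next, by \eqref{delta small} we have $|\delta_n(\lambda)|\le\alpha_{d,\ell,m}\le\tfrac12<1$ for all $n\ge N(d,\ell,m)$, so $1+\delta_n(\lambda)\ne0$ and \autoref{delta recur} applies at every such $n$, yielding
\[
|\delta_{n+1}(\lambda)|\le|\eps_n(\lambda)|+|C_n(\lambda)|\,\frac{|\delta_n(\lambda)|}{1-|\delta_n(\lambda)|}.
\]
Set $L:=\limsup_{n\to\infty}|\delta_n(\lambda)|\in[0,\alpha_{d,\ell,m}]$. Since $t\mapsto t/(1-t)$ is continuous and increasing on $[0,\tfrac12]$, since $C_n(\lambda)$ converges, and since $\eps_n(\lambda)\to0$, passing to the $\limsup$ in the last inequality gives $L\le\tfrac{1}{d-2}\cdot\tfrac{L}{1-L}$. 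If $L>0$, dividing by $L$ forces $(d-2)(1-L)\le1$, i.e. $L\ge 1-\tfrac{1}{d-2}=\tfrac{d-3}{d-2}$. But the choices in \eqref{choice of bounds} satisfy $\alpha_{d,\ell,m}<\tfrac{d-3}{d-2}$ in every case: for $d=4$ one has $\alpha_{d,\ell,m}\le\tfrac13<\tfrac12=\tfrac{d-3}{d-2}$, while for $d\ge5$ one has $\alpha_{d,\ell,m}\le\tfrac12<\tfrac23\le\tfrac{d-3}{d-2}$. This contradicts $L\le\alpha_{d,\ell,m}$, so $L=0$, i.e. $\delta_n(\lambda)\to0$, and therefore $r_n(\lambda)=\Tilde{r}_n(\lambda)\bigl(1+\delta_n(\lambda)\bigr)\to1$.

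The main points requiring care are the legitimacy of the $\limsup$ manipulation (limit of a product of a convergent nonnegative sequence with a bounded one, composed with a monotone continuous function) and the elementary, case-by-case verifications that $\Tilde{r}_{n,d,\ell,m}\to1$ and that $\alpha_{d,\ell,m}<\tfrac{d-3}{d-2}$; the latter inequality is precisely the slack deliberately built into the bounds in \eqref{choice of bounds}, so I do not expect any genuine obstacle here.
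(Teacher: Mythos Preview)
Your proof is correct but takes a genuinely different route from the paper's. The paper invokes \autoref{ratios converge} (Poincar\'e's theorem) to obtain the dichotomy $\lim_{n\to\infty} r_n(\lambda)\in\{1,-\tfrac{1}{d-2}\}$, and then rules out the second value by observing that it would force $|\delta_n(\lambda)|\to\tfrac{d-1}{d-2}\ge 1$, contradicting $|\delta_n(\lambda)|\le\alpha_{d,\ell,m}\le\tfrac12$. You bypass Poincar\'e's theorem altogether and argue directly that $\delta_n(\lambda)\to 0$ via a $\limsup$ inequality derived from the recursion and the asymptotics $C_n(\lambda)\to\tfrac{1}{d-2}$, $\eps_n(\lambda)\to 0$. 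Your approach is more self-contained, but it requires the slightly sharper arithmetic check $\alpha_{d,\ell,m}<\tfrac{d-3}{d-2}$ (which indeed holds in every case listed in \eqref{choice of bounds}, and which you verify); the paper only needs the cruder $\alpha_{d,\ell,m}\le\tfrac12<\tfrac{d-1}{d-2}$. Note in particular that your argument would break down in $d=3$, where $\tfrac{d-3}{d-2}=0$, whereas the paper's argument would still go through; since the statement concerns $d\ge 4$ this is not an issue here.
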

\begin{proof}
We know from \autoref{ratios converge} that $(r_n(\lambda))_n$ converges and 
\begin{align*}
    \lim_{n \to \infty} r_n(\lambda) \in \left\{  1 , -\frac{1}{d-2} \right\}.
\end{align*}
Assume that $\lim_{n \to \infty} r_{n}(\lambda) = - \frac{1}{d-2}$. Then, since $\lim_{n \to \infty}\Tilde{r}_n(\lambda) = 1$, this would imply 
\begin{align*}
    \lim_{n \to \infty } \delta_n(\lambda) &= \lim_{n \to \infty} \frac{r_n(\lambda)}{\Tilde{r}_n(\lambda)} -1 = -\frac{1}{d-2} - 1 = - \frac{d-1}{d-2}
\end{align*}
and in particular
\begin{align*}
    \lim_{n \to \infty}|\delta_n(\lambda)| = \frac{d-1}{d-2} \geq 1.
\end{align*}
This contradicts \eqref{delta small} since
\begin{align*}
    |\delta_n(\lambda)| \leq \alpha_{d,\ell,m} \leq \frac{1}{2}
\end{align*}
holds for all $n \geq N(d,\ell,m)$. Hence, we must have $\lim_{n \to \infty} r_{n}(\lambda) = 1$.
\end{proof}
This proposition together with \autoref{l0 case} proves \autoref{mode stability}.
\begin{rem}
In the case $d \geq 6$, $\ell \geq 3$ we have
\begin{align*}
    (\alpha_{d,\ell,m},\beta_{n,d,\ell,m},\gamma_{n,d,\ell,m}) = \left( \frac{1}{2},\frac{1}{3},\frac{1}{6} \right),
\end{align*}
which are constant. We find it surprising that such a choice is possible in the presence of the two parameters $d$ and $\ell$. In previous implementations of the quasi-solution method, where one additional parameter was present (see e.g. \cite{CosDonGlo17,GloSch21,CsoGloSch24,WeiKocDon25}), the triple $(\alpha,\beta_n,\gamma_n)$ often contained expressions that are rational functions in the corresponding parameter set.
We emphasize this because a successful implementation of the quasi-solution method not only requires a good choice for the quasi-solution $\Tilde{r}_n$ but also a sensible choice for $(\alpha,\beta_n,\gamma_n)$.
\end{rem}

\appendix
\section{Preliminary results}
\subsection{Poincaré's theorem on difference equations}
Consider the difference equation
\begin{align}\label{app diff eq}
    x(n+k) + p_1(n) x(n+k-1) + \ldots + p_k(n)x(n) = 0
\end{align}
with variable coefficients $p_k(n)$ and assume that for $1 \leq i \leq k$ there exist real numbers $p_i$ such that 
\begin{align}\label{app limit assum}
    \lim_{n \to \infty} p_i(n) = p_i.
\end{align}
The characteristic equation associated with \eqref{app diff eq} is
\begin{align}\label{app char eq}
    \alpha^k + p_1 \alpha^{k-1} + \ldots+ p_k =0.
\end{align}
\begin{thm}[Poincaré]\label{poincare}
Suppose that \eqref{app limit assum} is satisfied and that the roots $\alpha_1,\ldots,\alpha_k$ of Eq.~\eqref{app char eq} have distinct moduli. If $x(n)$ is a solution of \eqref{app diff eq}, then either $x(n) =0$ for all large enough $n$ or
\begin{align*}
    \lim_{n \to \infty}  \frac{x(n+1)}{x(n)} = \alpha_i,
\end{align*}
for some $i \in \{1,\ldots,k\}$.
\end{thm}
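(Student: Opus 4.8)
The strategy is the classical one: rewrite \eqref{app diff eq} as a first-order matrix recursion with companion matrix, diagonalise the limiting constant-coefficient matrix, and reduce the theorem to a statement about non-autonomous perturbations of a diagonal system whose diagonal entries have pairwise distinct moduli.

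First I would set $v(n) = (x(n), x(n+1), \ldots, x(n+k-1))^{T}$, so that \eqref{app diff eq} becomes $v(n+1) = C(n) v(n)$ with $C(n)$ the companion matrix whose last row is $(-p_k(n), \ldots, -p_1(n))$; by \eqref{app limit assum}, $C(n) \to C$, the companion matrix of \eqref{app char eq}, whose characteristic polynomial is the one in \eqref{app char eq}. Since the roots $\alpha_1, \ldots, \alpha_k$ are pairwise distinct, $C$ is diagonalisable, with eigenvectors the Vandermonde vectors $u_i = (1, \alpha_i, \alpha_i^{2}, \ldots, \alpha_i^{k-1})^{T}$. Writing $P = (u_1 \mid \cdots \mid u_k)$ and $w(n) = P^{-1} v(n)$, one gets $w(n+1) = (D + E(n)) w(n)$ with $D = \diag(\alpha_1, \ldots, \alpha_k)$ and $E(n) = P^{-1}(C(n)-C)P \to 0$; after relabelling we may assume $|\alpha_1| > |\alpha_2| > \cdots > |\alpha_k| \geq 0$. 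Since $x(n) = v(n)_1$ and $u_i$ has first entry $1$, it is enough to prove the following: every solution $w$ of the perturbed diagonal system is either eventually $0$, or there is an index $i$ with $w(n)_i \neq 0$ for all large $n$, with $w(n)_j / w(n)_i \to 0$ for all $j \neq i$, and with $w(n+1)_i / w(n)_i \to \alpha_i$. Indeed, in the second case $v(n) = \sum_j w(n)_j u_j = w(n)_i(u_i + o(1))$, so $x(n) = v(n)_1 \sim w(n)_i$ is eventually nonzero and $x(n+1)/x(n) \to \alpha_i$.

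I would prove this lemma by induction on $k$. The case $k=1$ is immediate: $w(n+1) = (\alpha_1 + E(n))w(n)$, so $w$ is eventually $0$ or $w(n+1)/w(n) \to \alpha_1$. For the inductive step, split $w = (w_1, w')$ with $w' \in \C^{k-1}$ and $D = \alpha_1 \oplus D'$, where $\|D'\| = |\alpha_2| < |\alpha_1|$. The clean half of the dichotomy: if the orbit eventually grows at the top rate $|\alpha_1|$, then --- using that $\|E(n)\|$ can be made arbitrarily small for $n$ large --- a cone $\{\, \|w'\| \leq \theta |w_1|\,\}$ around $e_1$ is forward invariant and attracting, which forces $w'(n)/w_1(n) \to 0$ and hence $w_1(n+1)/w_1(n) = \alpha_1 + (E(n)w(n))_1/w_1(n) \to \alpha_1$; in the complementary case the orbit is asymptotically carried by a time-dependent ``slow'' hyperplane $H_n$ (with $w(n) \in H_n \Leftrightarrow w(n+1) \in H_{n+1}$) on which the dynamics is conjugate to a $(k-1)$-dimensional perturbed diagonal system with diagonal $\alpha_2, \ldots, \alpha_k$, to which the induction hypothesis applies and produces some $\alpha_i$ with $i \geq 2$. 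The main obstacle is making this dichotomy precise: constructing $H_n$, showing it is invariant and carries the advertised reduced dynamics, and verifying that off $H_n$ one genuinely lands in the attracting cone. This is exactly where the hypothesis of pairwise distinct moduli is indispensable, and where one must upgrade crude exponential-rate bounds of Perron type, $\limsup_n \|w(n)\|^{1/n} \in \{|\alpha_1|, \ldots, |\alpha_k|\}$, to the genuine convergence of the ratio $w(n+1)/w(n)$ and to the identification of the correct $\alpha_i$; a forward-invariant-cone / exponential-dichotomy argument, iterated down the flag of slow subspaces, does this. A minor point, automatically settled by the alignment conclusion above, is that a solution of \eqref{app diff eq} which is not eventually $0$ is in fact eventually nonzero, so that the ratio $x(n+1)/x(n)$ is defined for all large $n$.
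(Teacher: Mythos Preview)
The paper does not give its own proof of this theorem; it simply cites Elaydi's textbook. So there is nothing to compare against at the level of argument.

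Your sketch is the classical route (companion matrix, diagonalisation of the limiting system, reduction to a perturbed diagonal recursion, induction on the dimension via a dominant-cone/slow-subspace dichotomy), and it is essentially the argument one finds in standard references. The reduction steps and the base case are fine. The one place where your write-up is still a plan rather than a proof is the inductive step: you correctly identify that the work lies in constructing the time-dependent slow hyperplanes $H_n$ and proving that a solution which does not enter the attracting cone around $e_1$ must lie on $H_n$ (and that the induced $(k-1)$-dimensional dynamics again has the form $D'+E'(n)$ with $E'(n)\to 0$). As stated, the dichotomy ``grows at the top rate / lies on the slow hyperplane'' is asserted rather than established; filling this in requires either an explicit construction of $H_n$ as a graph over $\mathrm{span}(e_2,\ldots,e_k)$ via a fixed-point argument, or the Perron-type step that $\limsup_n\|w(n)\|^{1/n}$ takes values only in $\{|\alpha_1|,\ldots,|\alpha_k|\}$ followed by a cone argument at the appropriate level. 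None of this is difficult, but it is where all the content is, and your current text does not yet carry it out.
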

The proof and further results on difference equations can be found in \cite{Ela05}.
\subsection{Wall's criterion}
We state Wall's formulation of the Routh-Hurwitz stability criterion.
\begin{thm}[\cite{Wall45}]\label{Wall}
Let $P(z) = z^n + b_1 z^{n-1} + \ldots b_n$ be a polynomial with real coefficients and let $Q(z) = b_1 z^{n-1} + b_3 z^{n-3} + \ldots$ be the polynomial that is comprised of the odd-indexed terms of $P(z)$. Then all the zeros of $P(z)$ have negative real parts if and only if
\begin{align*}
    \frac{Q(z)}{P(z)} = \cfrac{1}{1 + c_1 z + \cfrac{1}{c_2 z + \cfrac{1}{c_3z + \genfrac{}{}{0pt}{0}{}{\ddots + \cfrac{1}{c_n z}}}}},
\end{align*}
where all $c_1,c_2,\ldots,a_n$ are positive.
\end{thm}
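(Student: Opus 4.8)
The assertion is the Routh--Hurwitz stability criterion in Wall's continued‑fraction form, and since the present paper uses it only as a black box I would normally just cite it; but a self‑contained argument runs one step of the continued‑fraction algorithm and inducts on the degree $n$. First I would put the identity into a more symmetric shape. Write $P=P_{\mathrm e}+P_{\mathrm o}$, where $P_{\mathrm o}=Q=b_1z^{n-1}+b_3z^{n-3}+\cdots$ collects the odd‑indexed terms and $P_{\mathrm e}=z^n+b_2z^{n-2}+\cdots$ the even‑indexed ones. Then $\frac{Q}{P}=\frac{1}{1+P_{\mathrm e}/P_{\mathrm o}}$, so the claim is equivalent to
\begin{align*}
    \frac{P_{\mathrm e}(z)}{P_{\mathrm o}(z)} = c_1 z + \cfrac{1}{c_2 z + \cdots + \cfrac{1}{c_n z}}, \qquad c_1,\dots,c_n>0 .
\end{align*}
Because $P_{\mathrm e}$ and $P_{\mathrm o}$ have opposite parity and degrees differing by exactly one, polynomial division gives $P_{\mathrm e}=(\tfrac1{b_1}z)\,P_{\mathrm o}+S$ with $S$ again of pure parity and $\deg S\le n-2$ — no constant term survives, which is why the expansion has the displayed shape — and this division is precisely one row of the Routh array, with $c_1=1/b_1$. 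Setting $P_1:=P_{\mathrm o}+S$, a polynomial of degree $n-1$ whose leading‑parity part is $P_{\mathrm o}$ and whose other part is $S$, the continued fraction of $P_{\mathrm e}/P_{\mathrm o}$ is $c_1z+1/(\text{continued fraction of }P_{\mathrm o}/S)$, and the tail is exactly the expansion attached to $P_1$. The expansion has $n$ genuine linear partial quotients iff no remainder ever drops in degree by more than one, which one checks is equivalent to $\gcd(P_{\mathrm e},P_{\mathrm o})=1$ together with non‑vanishing of the successive Routh entries, and ultimately to $P$ having no zero on the imaginary axis.

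The crux is the one‑step reduction lemma: \emph{$P$ is Hurwitz (all zeros in $\{\Real z<0\}$) if and only if $b_1>0$ and $P_1$ is Hurwitz.} Granting it, the theorem follows by induction on $n$: the base case $n=1$ is $\frac{b_1}{z+b_1}=\frac1{1+z/b_1}$, so $c_1=1/b_1>0\iff b_1>0\iff -b_1\in\{\Real z<0\}$; and in general ``all of $P$'s coefficients $c_1,\dots,c_n$ are positive'' unwinds, via $c_1=1/b_1$ and the identification of the tail with $P_1$'s expansion, to ``$b_1>0$ and (inductively) $P_1$ Hurwitz'', hence to ``$P$ Hurwitz''. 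I would prove the reduction lemma by an argument‑principle/homotopy argument: deform through the family
\begin{align*}
    P_\eps(z) := P_{\mathrm e}(z) + \Bigl(1-\tfrac{\eps z}{b_1}\Bigr)P_{\mathrm o}(z), \qquad \eps\in[0,1],
\end{align*}
so that $P_0=P$ and $P_1=P_{\mathrm e}-\tfrac{z}{b_1}P_{\mathrm o}+P_{\mathrm o}=S+P_{\mathrm o}$ is the reduced polynomial of degree $n-1$. A one‑line computation (using that $P_{\mathrm e}(iy)$ and $P_{\mathrm o}(iy)$ lie on the two perpendicular coordinate axes of $\C$ for real $y$) shows $P_\eps(iy)=0\iff P_{\mathrm e}(iy)=P_{\mathrm o}(iy)=0\iff P(iy)=0$, independently of $\eps$, so if $P$ has no imaginary zero neither does any $P_\eps$. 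Meanwhile $P_\eps$ has leading part $(1-\eps)z^n+b_1z^{n-1}+\cdots$, so as $\eps\to1^-$ the escaping zero behaves like $-\tfrac{b_1}{1-\eps}$, i.e.\ it exits through $\{\Real z<0\}$ precisely when $b_1>0$. Hence, for $b_1>0$, the number of zeros in $\{\Real z>0\}$ is constant along the deformation, which is exactly the asserted equivalence. (Alternatively one can invoke the Hermite--Biehler theorem and check that the interlacing of the purely imaginary zeros of $P_{\mathrm e}$ and $P_{\mathrm o}$ transfers to the pair $(P_{\mathrm o},S)$ exactly when $b_1>0$.)

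I expect this reduction lemma to be the main obstacle: tracking the zeros of $P_\eps$ rigorously, in particular verifying that no zero crosses the imaginary axis and that precisely one zero escapes to infinity, on the correct side, as the degree drops from $n$ to $n-1$. The remaining points are routine bookkeeping but must be stated with care — namely that $b_1\le 0$ already forces non‑Hurwitz (the sum of the zeros equals $-b_1$), and that the degenerate situations in which the continued fraction fails to produce $n$ positive linear terms (a common factor of $P_{\mathrm e}$ and $P_{\mathrm o}$, or a vanishing Routh entry) coincide exactly with $P$ having a zero on the imaginary axis, hence with non‑Hurwitz.
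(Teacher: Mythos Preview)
The paper does not prove this theorem; it is merely stated with a citation to Wall and used as a black box. Your sketch therefore goes strictly beyond what the paper provides. The argument you outline---polynomial division $P_{\mathrm e}=(z/b_1)P_{\mathrm o}+S$ giving $c_1=1/b_1$, the one-step reduction lemma ``$P$ Hurwitz $\iff$ $b_1>0$ and $P_1:=P_{\mathrm o}+S$ Hurwitz'' proved via the homotopy $P_\eps$, and induction on $n$---is a standard and correct route to Wall's criterion.

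One minor inaccuracy in a side remark: you assert that the degenerate situations (a vanishing Routh entry, or a common factor of $P_{\mathrm e}$ and $P_{\mathrm o}$) ``coincide exactly with $P$ having a zero on the imaginary axis''. This intermediate characterisation is not correct---for instance $P(z)=z^4+z^3+z^2+z+1$ has a vanishing Routh entry but no purely imaginary zero---though your final conclusion that such $P$ are non-Hurwitz is still right. This does not affect the main argument, since the induction together with the reduction lemma already handles both directions of the equivalence without any separate treatment of the degenerate cases.
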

We mostly need the following two special cases.
\begin{cor}\label{Wall degree 2 and 4}
\begin{enumerate}[(i)]
    \item Let $b_1, b_2 \in \R$. Then all the zeros of $P(z) = z^2 + b_1 z + b_2$ have negative real parts if and only if $b_1,b_2 >0$.
    \item Let $b_1,b_2,b_3,b_4 \in \R$. Then all the zeros of $P(z)=z^4 + b_1z^3 + b_2 z^2 + b_3 z + b_4$ have negative real parts if and only if $b_1,b_3,b_4 > 0$ and $b_1 b_2 b_3 - b_3^2 - b_1^2 b_4 > 0$.
\end{enumerate}
\end{cor}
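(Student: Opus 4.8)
The plan is to obtain both statements as direct computational consequences of Wall's criterion (\autoref{Wall}). In each case one forms the odd-part polynomial $Q$, runs the Euclidean division algorithm encoded in Wall's continued fraction, reads off the coefficients $c_i$ explicitly in terms of the $b_i$, and then checks that the condition ``all $c_i>0$'' is equivalent to the stated inequalities. No genuinely new idea is needed beyond \autoref{Wall}; the content is an explicit low-degree computation together with a short two-way algebraic equivalence.

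For (i), with $P(z)=z^2+b_1z+b_2$ the odd part is $Q(z)=b_1z$, and polynomial division gives $P(z)/Q(z)=1+\tfrac1{b_1}z+\tfrac{b_2}{b_1z}$, which exhibits $c_1=1/b_1$ and $c_2=b_1/b_2$. Hence, by \autoref{Wall}, $P$ has all zeros with negative real part iff $c_1,c_2>0$, i.e.\ iff $b_1>0$ and $b_1/b_2>0$, which is exactly $b_1,b_2>0$. The degenerate cases in which the continued fraction fails to exist are disposed of in one line: if $b_1=0$ then $P(z)=z^2+b_2$, and if $b_1\neq0$ but $b_2=0$ then $P(z)=z(z+b_1)$; in both cases $P$ manifestly has a zero with nonnegative real part, while the claimed condition also fails.

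For (ii), with $P(z)=z^4+b_1z^3+b_2z^2+b_3z+b_4$ the odd part is $Q(z)=b_1z^3+b_3z$. Iterating the division (first $P$ against $Q$, then $Q$ against the remainder, and so on) I would obtain, with the auxiliary pivots $\beta_2:=b_2-b_3/b_1$ and $\beta_3:=b_3-b_1b_4/\beta_2$, the continued-fraction coefficients
\[
c_1=\tfrac{1}{b_1},\qquad c_2=\tfrac{b_1}{\beta_2},\qquad c_3=\tfrac{\beta_2}{\beta_3},\qquad c_4=\tfrac{\beta_3}{b_4}.
\]
By \autoref{Wall}, $P$ has all zeros with negative real part iff $b_1>0$, $\beta_2>0$, $\beta_3>0$, $b_4>0$. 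It then remains only the elementary algebra of translating these four inequalities into the stated form, using the identities $b_1\beta_2=b_1b_2-b_3$ and $b_1\beta_2\beta_3=b_1b_2b_3-b_3^2-b_1^2b_4$. If the four pivot inequalities hold, then $b_1b_2b_3-b_3^2-b_1^2b_4=b_1\beta_2\beta_3>0$ and $b_3=\beta_3+b_1b_4/\beta_2>0$, giving $b_1,b_3,b_4>0$ together with the quartic inequality. Conversely, if $b_1,b_3,b_4>0$ and $b_1b_2b_3-b_3^2-b_1^2b_4>0$, then in particular $b_1b_2b_3>b_3^2$, so $b_1b_2>b_3$ and hence $\beta_2=(b_1b_2-b_3)/b_1>0$, whence $\beta_3=(b_1b_2b_3-b_3^2-b_1^2b_4)/(b_1\beta_2)>0$; this recovers all four pivot inequalities. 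Degenerate situations (a vanishing pivot, or $b_1=0$) are excluded on both sides simultaneously: a Hurwitz quartic factors into real factors $z+a$ with $a>0$ and $z^2+pz+q$ with $p,q>0$, hence has strictly positive coefficients and never produces a vanishing pivot, while the stated conditions already force $b_1>0$ and $\beta_2>0$.

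There is no real obstacle here: the division algorithm in degrees $2$ and $4$ is routine, and the only points demanding a little care are the bookkeeping of the degenerate cases where the continued fraction breaks down and the verification that the chain of equivalences $\{c_1,\dots,c_4>0\}\Leftrightarrow\{b_1>0,\ \beta_2>0,\ \beta_3>0,\ b_4>0\}\Leftrightarrow\{b_1,b_3,b_4>0,\ b_1b_2b_3-b_3^2-b_1^2b_4>0\}$ runs in both directions.
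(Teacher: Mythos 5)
Your argument is correct and follows the same route as the paper: form $Q$, run the Euclidean division encoded in Wall's continued fraction, read off the $c_i$, and translate ``all $c_i>0$'' into the stated inequalities. Your pivot notation $\beta_2=b_2-b_3/b_1$, $\beta_3=b_3-b_1b_4/\beta_2$ is a mild repackaging of the paper's explicit $c_i$-formulas (one checks $c_2=b_1/\beta_2=b_1^2/(b_1b_2-b_3)$, $c_3=\beta_2/\beta_3=(b_1b_2-b_3)^2/(b_1(b_1b_2b_3-b_3^2-b_1^2b_4))$, $c_4=\beta_3/b_4$), and it does make the sequential equivalence of the four sign conditions a bit more transparent. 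One small remark: in case (i) you correctly get $c_2=b_1/b_2$; the paper records $c_2=b_2/b_1$, which is a harmless typo since positivity of $b_1/b_2$ and $b_2/b_1$ is the same condition once $b_1>0$.
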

\begin{proof}
$(i):$ One calculates
\begin{align*}
    \frac{b_1z}{z^2 + b_1 z + b_2} = \frac{1}{1 + c_1 z + \frac{1}{c_2 z}},
\end{align*}
where 
\begin{align*}
    c_1 &= \frac{1}{b_1}\\
    c_2 &= \frac{b_2}{b_1},
\end{align*}
which clearly are both positive if and only if both $b_1,b_2$ are positive. Hence Wall's criterion yields the claim.

$(ii):$ One calculates
\begin{align*}
    \frac{b_1 z^3 + b_3 z}{z^4 + b_1 z^3 + b_2 z^2 + b_3 z + b_4} &=  \frac{1}{1 + c_1 z + \frac{1}{c_2 z + \frac{1}{c_3 z + \frac{1}{c_4 z}}}}
\end{align*}
with 
\begin{align*}
    c_1 &= \frac{1}{b_1}\\
    c_2 &= \frac{b_1^2}{b_1 b_2 - b_3}\\
    c_3 &= \frac{(b_1 b_2 - b_3)^2}{b_1(b_1 b_2 b_3 - b_3^2 - b_1^2 b_4)}\\
    c_4 &= \frac{b_1 b_2 b_3 - b_3^2 - b_1^2 b_4 }{(b_1 b_2 - b_3)b_4}.
\end{align*}
Assume first that $c_1,c_2,c_3,c_4$ are positive. Then $b_1 = \frac{1}{c_1}$ is positive. Then also $b_1 b_2 - b_3 = \frac{b_1^2}{c_2}$ is positive. This yields 
\begin{align*}
    b_1 b_2 b_3 - b_3^2 - b_1^2 b_4 &= \frac{(b_1b_2 -b_3)^2}{b_1 c_3} > 0.
\end{align*}
From this one has 
\begin{align*}
    b_4 = \frac{b_1 b_2 b_3 - b_3^2 - b_1^2 b_4}{(b_1 b_2 - b_3) c_4 } > 0
\end{align*}
and
\begin{align*}
    b_3 = \frac{b_1 b_2  b_3 - b_3^2 - b_1^2 b_4}{b_1 b_2 - b_3} + \frac{b_1^2 b_4}{b_1 b_2 - b_3} > 0.
\end{align*}
On the other hand, if $b_1,b_3,b_4$ and $b_1 b_2 b_3 - b_3^2 - b_1^2 b_4$ are positive, then we have
\begin{align*}
    b_1 b_2 - b_3 = \frac{b_1 b_2 b_3 - b_3^2 - b_1^2 b_4}{b_3} + \frac{b_1^2 b_4}{b_3} > 0.
\end{align*}
Using this, by inspection all of $c_1,c_2,c_3,c_4$ are positive. Thus Wall's criterion yields the claim.
\end{proof}
\subsection{Phragm\'en-Lindelöf principle}
\begin{thm}[Phragm\'en-Lindelöf principle]\label{phragmen lindelöf}
Let $f :\overline{\Half} \to \C$ be continuous and $f|_\Half $ holomorphic. Let $M \geq 0$. If
\begin{enumerate}
    \item $|f(it)| \leq M$ for all $t \in \R$,
    \item there exists a $C \geq 0$ such that $|f(z)| \leq C e^{|z|^\frac{1}{2}}$ for all $z \in \Half$ 
\end{enumerate}
then
\begin{align*}
    |f(z)| \leq M
\end{align*}
for all $z \in \overline{\Half}$.
\end{thm}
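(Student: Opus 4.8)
The plan is to run the classical Phragmén–Lindelöf argument: damp $f$ by a sub-exponential holomorphic factor, apply the ordinary maximum modulus principle on a family of large half-disks, and then let the damping tend to zero.

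First I would fix an exponent $\alpha$ with $\tfrac12 < \alpha < 1$ and let $z^\alpha$ denote the principal branch of the power, which is holomorphic on $\Half$ and extends continuously to $\overline{\Half}$ (with value $0$ at the origin, since $\overline{\Half}$ avoids the branch cut). For $\eps > 0$ I set $g_\eps(z) := f(z)\,e^{-\eps z^\alpha}$. Writing $z = re^{i\theta}$ with $\theta \in [-\tfrac\pi2,\tfrac\pi2]$ gives $\Real(z^\alpha) = r^\alpha \cos(\alpha\theta) \geq r^\alpha \cos(\tfrac{\alpha\pi}{2}) =: c_\alpha |z|^\alpha$ with $c_\alpha > 0$ (this is where $\alpha < 1$ enters). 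Hence $g_\eps$ is continuous on $\overline{\Half}$, holomorphic on $\Half$, and hypothesis (2) yields
\[
|g_\eps(z)| \leq C\,e^{|z|^{1/2} - \eps c_\alpha |z|^\alpha} \longrightarrow 0 \quad \text{as } |z| \to \infty,\ z \in \overline{\Half},
\]
because $\alpha > \tfrac12$. On the imaginary axis one has $\Real\big((it)^\alpha\big) = |t|^\alpha \cos(\tfrac{\alpha\pi}{2}) \geq 0$, so hypothesis (1) gives $|g_\eps(it)| \leq M$ for all $t \in \R$.

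Next I would fix $z_0 \in \Half$ and choose $R > |z_0|$ large enough that $|g_\eps| \leq M$ on the arc $\{|z| = R,\ \Real z \geq 0\}$, which is possible by the decay just established. Applying the maximum modulus principle to $g_\eps$ on the closed half-disk $\{|z| \leq R,\ \Real z \geq 0\}$ — whose boundary consists of that arc together with a segment of the imaginary axis, on which $|g_\eps| \leq M$ — gives $|g_\eps(z_0)| \leq M$. Since $z_0 \in \Half$ is arbitrary and the bound already holds on the imaginary axis, $|g_\eps| \leq M$ on all of $\overline{\Half}$, i.e. $|f(z)| \leq M\,e^{\eps \Real(z^\alpha)} \leq M\,e^{\eps |z|^\alpha}$ for every $\eps > 0$. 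Letting $\eps \to 0^+$ with $z$ fixed yields $|f(z)| \leq M$.

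The only points requiring care are the choice of the exponent $\alpha$ strictly between $\tfrac12$ and $1$ — large enough that the damping factor $e^{-\eps c_\alpha |z|^\alpha}$ overwhelms the assumed growth $e^{|z|^{1/2}}$, yet small enough that $\cos(\tfrac{\alpha\pi}{2}) > 0$ so that the boundary estimate on the imaginary axis is not spoiled — together with the verification that $z \mapsto z^\alpha$ is continuous up to the boundary and at the origin, so that the maximum modulus principle genuinely applies on the closed half-disk. Neither is a serious obstacle; once the auxiliary function $g_\eps$ is in place the argument is routine, and the growth rate $\tfrac12$ in hypothesis (2) is only used through the strict inequality $\tfrac12 < 1$, so any fixed subexponential rate below $1$ would do equally well.
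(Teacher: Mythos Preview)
Your argument is the standard Phragm\'en--Lindel\"of damping argument and is correct. The only slight imprecision is that hypothesis (2) is stated for $z\in\Half$, not $\overline{\Half}$, so the decay $|g_\eps(z)|\to 0$ strictly speaking holds only on the open arc; but the two endpoints $\pm iR$ lie on the imaginary axis where $|g_\eps|\le M$ is already known, so the boundary estimate on the closed half-disk goes through unchanged.

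As for comparison: the paper does not actually prove this statement. It simply writes ``For a proof see for example \cite{Don24}'' and cites an external reference. Your proof therefore supplies what the paper omits, and it is exactly the classical argument one would expect to find in the cited source.
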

For a proof see for example \cite{Don24}.
\section{Lie algebra theory}
\label{Decoupling of the equations}
We want to give some insight how one decouples Eq.~\eqref{full mode eq} for $\ell \geq 1$ and $d \geq 3$. We do not really need the case $d =3$ but we include it here, also to make the approach in \cite{WeiKocDon25} more explicit.

The matrix elements of the coupling operator $K$, which acts on $\C^d$-valued functions, are (up to a pre-factor) given by
\begin{align*}
    (K_{jk}f)(\xi) := (\xi_j \pd_{\xi^k} - \xi_k \pd_{\xi^j})f(\xi),
\end{align*}
where $f: \R^d\to\C$.
The operator $K$ is an angular differential operator in the sense that it maps radial functions to 0. Consequently, it suffices to study how it acts on functions $f:\S^{d-1} \to \C^d$. The crucial idea, observed in \cite{WeiKocDon25}, is to connect this operator with representations of
\begin{align*}
    \mfk{so}(d) := \{ X \in M_d(\C): X^t = -X \},
\end{align*}
the Lie algebra of all skew-symmetric complex $(d\times d)$-matrices.
First we fix some notation. Let $E_{jk}$ be the matrix with a 1 in the $j$-th row and $k$-th column and 0 in all other entries, i.e., $(E_{jk})_{m n} = \delta_{jm} \delta_{k n}$. Then setting $F_{jk} := E_{j k}  - E_{k j}$, we get that $\{F_{jk}\}_{1 \leq j < k \leq d}$ forms a basis of $\mfk{so}(d)$. Now consider the polynomial space 
\begin{align*}
    \Y_\ell := \{ p \in \C[\xi] : \Delta p = 0  \text{ and }p\text{ is homogeneous of degree }\ell\}
\end{align*}
and denote by $\Y_\ell^d$ the space of $\C^d$-valued functions where each component is in $\Y_\ell$.
We obtain a representation $\psi_\ell$ of $\mfk{so}(d)$ on $\Y_\ell$ by defining 
\begin{align*}
    \psi_\ell(F_{j k}) p := K_{jk} p.
\end{align*}
On the other hand, one has the standard representation $\pi: \mfk{so}(d) \to \mfk{gl}(\C^d)$ simply given by $\pi(X) = X$.

Now one defines the representation $\rho_\ell:\mfk{so}(d) \to \mfk{gl}(\Y_\ell^d)$ by
\begin{align*}
    (\rho_\ell(X) \Vec{p})_k := X_{k}^{\ j} \Vec{p}_j + \psi_\ell(X)\Vec{p}_k = (\pi(X) \Vec{p})_k + \psi_\ell(X) \Vec{p}_k.
\end{align*}
The reason we are interested in this representation comes from the identity
\begin{align}\label{casimir of rho}
    \sum_{1 \leq j < k \leq d} \rho_{\ell}(F_{j k})^2 = - (d-1)\id_{\Y_\ell^d} + 2 K|_{\Y_\ell^d} - \ell(\ell+d-2)\id_{\Y_\ell^d},
\end{align}
which is easily checked. Since we aim at diagonalizing $K|_{\Y_\ell^d}$, it suffices to do so for $\sum_{1 \leq j < k \leq d} \rho_{\ell}(F_{j k})^2$. It turns out that this is a (multiple of a) \emph{Casimir operator}. Since Casimir operators always act diagonally on irreducible subrepresentations, our goal will be to find the irreducible components of $(\Y_\ell^d,\rho_\ell)$.

We will outline in the following how to find this decomposition. For this we will make constant use of standard definitions and theorems of Lie algebra theory, which can be found for example in \cite{Hum78,FulHar91}, but we try to be as explicit as possible.

In \cite{WeiKocDon25} the same is achieved in the special case $d=3$ by using the fact that the representation $(\Y_\ell^d,\rho_\ell)$ is isomorphic to the tensor product of the representations $(\C^d,\pi)$ and $(\Y_\ell, \psi_\ell)$ and using classical results on angular momentum operators in three dimensions, see for example \cite[Proposition 17.23]{Hal13}.
\subsection{Passing to an equivalent bilinear form}
The following ideas come from \cite[§18.1]{FulHar91}.

The Lie algebra $\mfk{so}(d)$ is the space of matrices that are skew-symmetric with respect to the standard bilinear form $b$ on $\C^d$, which is given by 
\begin{align*}
    b(z,w) := \sum_{j = 1}^d z_j w_j.
\end{align*}
It is more convenient to work with an equivalent bilinear form on $\C^d$. If $\Tilde{b}$ is another bilinear form on $\C^d$, we define the space
\begin{align*}
    \mfk{so}(\Tilde{b}) := \{X \in M_d(\C):  \Tilde{b}(X z , w) = - \Tilde{b}(z, X w) \quad \forall z,w \in \C^d\}.
\end{align*}
We will consider
\begin{align*}
    \Tilde{b}_{d}(z,w):=\Tilde{b}(z,w) := \begin{cases}
        \sum_{j=1}^{n} (z_j w_{n+j} + z_{n+j} w_j), & d=2n \text{ is even}\\
        \sum_{j=1}^{n} (z_j w_{n+j} + z_{n+j} w_j) + z_{2n+  1}w_{2n+ 1}, & d=2n+1 \text{ is odd}
    \end{cases}.
\end{align*}
We will have to distinguish between $d$ odd or even at times. This form is related to the standard form via 
\begin{align*}
    \Tilde{b}(z , w) = b( Tz,T w),\quad \forall z,w\in\C^d,
\end{align*}
where
\begin{align*}
    T = \begin{cases}
        \begin{pmatrix}
            I_n & \frac{1}{2} I_n\\
            i I_n & -\frac{1}{2} i I_n
        \end{pmatrix},& d = 2n\\
        \begin{pmatrix}
            I_n & \frac{1}{2} I_n & 0\\
            i I_n & -\frac{1}{2} i I_n & 0\\
            0 & 0 & 1
        \end{pmatrix},& d=2n+1
    \end{cases}
\end{align*}
and $I_n$ is the $(n\times n)$-identity matrix. Then $\mfk{so}(\Tilde{b})$ is a Lie algebra (again with the commutator as a Lie bracket) isomorphic to $\mfk{so}(d)$ with Lie algebra isomorphism $\phi:\mfk{so}(d) \to \mfk{so}(\Tilde{b})$
\begin{align*}
    \phi(X) = T^{-1} X T.
\end{align*}
We prefer to work with $\mfk{so}(\Tilde{b})$, since its elements have a nice block structure. Namely $X \in \mfk{so}(\Tilde{b})$ if and only if
\begin{align*}
    X = \begin{cases}
        \begin{pmatrix}
            A & B\\
            C & -A^t
        \end{pmatrix}, \quad A \in M_n(\C), B,C \in \mfk{so}(n) &\text{if } d = 2n\\
        \begin{pmatrix}
            A & B & z_1\\
            C & -A^t &z_2\\
            -z_2^t & -z_1^t & 0
        \end{pmatrix}, \quad A \in M_n(\C), B,C \in \mfk{so}(n), z_1,z_2 \in \C^n &\text{if } d = 2n+1
    \end{cases},
\end{align*}
see \cite[p.~269]{FulHar91}.
Note that we can consider $\mfk{so}(\Tilde{b}_{2n})$ as a subspace of $\mfk{so}(\Tilde{b}_{2n+1})$.
\subsection{Cartan subalgebra and root space decomposition}
The main advantage of using this bilinear form comes from the fact that the Cartan subalgebra and the roots are then more natural. We define
\begin{align*}
    H_j := \begin{pmatrix}
        E_{j j} & 0\\
        0 & -E_{j j}
    \end{pmatrix} = \diag(E_{j j}, - E_{j j}), \quad 1 \leq j \leq n.
\end{align*}
Then $\mfk{h} = \text{span}\{H_j:1 \leq j \leq n\} \subseteq \mfk{so}(\Tilde{b}_{2n}) \subseteq \mfk{so}(\Tilde{b}_{2n+1})$, the subspace of diagonal matrices, is a Cartan subalgebra both of $\mfk{so}(\Tilde{b}_{2n})$ and $\mfk{so}(\Tilde{b}_{2n+1})$. We define $\{L_j\}_{j=1}^n \subseteq \mfk{h}^\ast$ to be the dual basis of $\{H_j\}_{j=1}^n$, so $L_j (H_k) = \delta_{j k}$. Then the root system $R$ is given by
\begin{align*}
    R = \begin{cases}
        \{\pm L_j \pm L_k: 1 \leq j < k \leq n\}, & d=2n\\
        \{\pm L_j \pm L_k: 1 \leq j < k \leq n\} \cup \{\pm L_j : 1 \leq j \leq n\}, & d=2n+1
    \end{cases}
\end{align*}
and the root spaces are spanned by 
\begin{align*}
    L_j - L_k:& \begin{pmatrix}
        E_{j k} & 0 \\
        0 &- E_{k j}
    \end{pmatrix}\\
    L_j + L_k:& \begin{pmatrix}
        0 & E_{j k} - E_{k j}\\
        0 & 0
    \end{pmatrix}\\
    -L_j - L_k:& \begin{pmatrix}
        0 & 0 \\
        E_{j k} - E_{k j} & 0
    \end{pmatrix}
\end{align*}
for both $d$ even and odd and for $d$ odd one additionally has
\begin{align*}
    L_j: &\begin{pmatrix}
        0 & 0 & e_j\\
        0 & 0 & 0\\
        0 & - e_j^t &0
    \end{pmatrix}\\
    -L_j: &\begin{pmatrix}
        0 & 0 & 0\\
        0 & 0 & e_j\\
        -e_j^t & 0 &0
    \end{pmatrix},
\end{align*}
where $e_j$ denotes the $j$-th unit vector in $\C^d$. We choose as the positive roots 
\begin{align*}
    R^+ = \begin{cases}
        \{ L_j \pm L_k: 1 \leq j < k \leq n\}, & d=2n\\
        \{ L_j \pm L_k: 1 \leq j < k \leq n\} \cup \{ L_j : 1 \leq j \leq n\}, & d=2n+1
    \end{cases}.
\end{align*}
\subsection{Irreducible representations}
A finite dimensional irreducible representation of a complex semisimple Lie algebra is classified by its highest weight, which is a dominant integral weight, see \cite[p.~113]{Hum78}. In our case a functional $L = \sum_{j=1}^n a_j L_j \in \mfk{h}^\ast$ is a dominant integral weight if and only if
\begin{itemize}
    \item either all $a_j$ are integers or all of them are half-integers, i.e., elements of $\frac{1}{2} \Z \setminus \Z$ and
    \item the inequalities
    \begin{align*}
        \begin{cases}
            a_1 \geq \ldots \geq a_{n-1} \geq |a_n| & d=2n\\
            a_1 \geq \ldots  \geq a_{n-1} \geq a_n \geq 0 &d=2n+1
        \end{cases}
    \end{align*}
    hold.
\end{itemize}
In this case the, up to isomorphism unique, representation is denoted by $\Gamma(L)$. For us only the integer case will be relevant.

In order for us to efficiently identify an irreducible component of our representation at hand, we state a useful lemma.
\begin{lem}\label{decomp lemma}
Let $\mfk{g}$ be a (finite dimensional) complex semisimple Lie algebra with a chosen Cartan subalgebra $\mfk{h}$ and system of positive roots $R^+$. Denote for $\alpha \in R^+$ the corresponding root space by $\mfk{g}_\alpha$.

Let $(V,\rho)$ be a representation of $\mfk{g}$. Assume that $0 \not=v \in V$ satisfies the following.
\begin{itemize}
    \item $\rho(h) v = L(h) v$ for all $h \in \mfk{h}$, where $L \in \mfk{h}^\ast$ is a dominant integral weight.
    \item $\rho(x) v = 0$ for all $x \in \mfk{g}_\alpha$ and $\alpha \in R^+$.
\end{itemize}
Then $W$, the subrepresentation of $V$ generated by $v$, is finite dimensional, irreducible and has the highest weight $L$ with highest weight vector $v$.
\end{lem}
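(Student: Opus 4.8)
The plan is to identify $v$ as a \emph{maximal vector} (highest weight vector) for the given triangular decomposition and then run the standard highest-weight machinery for semisimple Lie algebras. Set $\mfk{n}^{\pm} := \bigoplus_{\alpha \in R^+} \mfk{g}_{\pm\alpha}$. Since $v$ is an $\mfk{h}$-eigenvector and is annihilated by every $\mfk{g}_\alpha$ with $\alpha \in R^+$, the Poincaré--Birkhoff--Witt factorization $U(\mfk{g}) = U(\mfk{n}^-)\,U(\mfk{h})\,U(\mfk{n}^+)$ gives $W = U(\mfk{g})v = U(\mfk{n}^-)v$. Expanding $U(\mfk{n}^-)$ in ordered monomials in root vectors from the spaces $\mfk{g}_{-\alpha}$ ($\alpha \in R^+$) and applying them to $v$, one reads off that every weight of $W$ lies in $L - \sum_{\alpha \in R^+}\Z_{\geq 0}\,\alpha$, and that the $L$-weight space equals $\C v$ (a nonempty sum of positive roots has positive height, hence cannot vanish, so only the empty monomial lands in weight $L$). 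In particular $W$ is finite dimensional, since $V$ is (all representations we consider are finite dimensional).

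Next I would invoke Weyl's theorem on complete reducibility for the semisimple Lie algebra $\mfk{g}$ (see \cite{Hum78}): $W = \bigoplus_{j=1}^{k} W_j$ with each $W_j$ irreducible. Decomposing the $L$-weight space, $\C v = W_L = \bigoplus_{j} (W_j)_L$, so exactly one index $j_0$ has $(W_{j_0})_L \neq 0$, and then $v \in (W_{j_0})_L \subseteq W_{j_0}$. Because $v$ generates $W$, this forces $W = U(\mfk{g})v \subseteq W_{j_0} \subseteq W$, hence $W = W_{j_0}$ is irreducible. Finally, since every weight of $W$ is $\leq L$ in the dominance order and $L$ itself occurs as a weight, $L$ is the unique maximal weight of $W$, i.e.\ its highest weight, and $v$ spans its highest weight space; as $L$ is assumed dominant integral, this also identifies $W \cong \Gamma(L)$.

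I do not expect a genuine obstacle here: the argument is essentially bookkeeping with PBW and Weyl complete reducibility, together with the trivial observation that a nonzero sum of positive roots is nonzero. The only point deserving a word of care is the appeal to finite dimensionality of the ambient $V$ (equivalently of $W$): without it the conclusion would fail, since a highest weight module with dominant integral highest weight need not be irreducible or finite dimensional in general (Verma modules). In every application in this paper, however, $V = \Y_\ell^d$ is finite dimensional, so this causes no difficulty.
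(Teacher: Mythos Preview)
The paper does not actually give a proof of this lemma; immediately after stating it, the authors only remark that the hypotheses on $v$ are sometimes taken as the \emph{definition} of a highest weight vector and refer to \cite[p.~202]{FulHar91}. Your argument via the PBW factorization $U(\mfk{g}) = U(\mfk{n}^-)U(\mfk{h})U(\mfk{n}^+)$ followed by Weyl's complete reducibility is correct and is the standard way to substantiate the statement.

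Your closing caveat is also on point and worth keeping: the lemma as literally stated does not assume $V$ finite dimensional, and without that assumption the conclusion is false (the Verma module $M(L)$ itself, for $L$ dominant integral, is a highest weight module that is neither finite dimensional nor irreducible). Your proof uses finite dimensionality of $V$ to get finite dimensionality of $W$, which is then needed to invoke Weyl. Since every application in the paper has $V = \Y_\ell^d$, this is harmless, but strictly speaking the hypothesis should be added to the lemma.
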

Sometimes this condition for $v$ is taken as a definition for being a highest weight vector, for example in \cite[p.~202]{FulHar91}.

Note that the representation $\rho_\ell : \mfk{so}(d) \to \mfk{gl}(\Y_\ell^d)$ induces a representation $\Tilde{\rho}_\ell : \mfk{so}(\Tilde{b}) \to \mfk{gl}(\Y_\ell^d)$ by defining $\Tilde{\rho}_\ell := \rho_\ell \circ \phi^{-1}$. For $\Tilde{\rho}_\ell$ one now has to find the right element $ \Vec{p} \in \Y_\ell^d$ and the right dominant integral weight $L$ to satisfy the conditions of \autoref{decomp lemma}.

We will explicitly write down $L$ and $\Vec{p}$ for each case.
\begin{itemize}
    \item $d=3$: We have $L = (\ell+1)L_1$ with 
    \begin{align*}
        \Vec{p} = \begin{pmatrix}
            (\xi_1 + i \xi_2)^\ell\\
            i (\xi_1 + i \xi_2)^\ell\\
            0
        \end{pmatrix},
    \end{align*}
    then $L = \ell L_1$ with
    \begin{align*}
        \Vec{p} = \begin{pmatrix}
            - (\xi_1 + i \xi_2)^{\ell-1} \xi_3\\
            - i (\xi_1 + i \xi_2)^{\ell-1}\xi_3\\
            (\xi_1 + i \xi_2)^{\ell}
        \end{pmatrix}
    \end{align*}
    and $L = (\ell-1)L_1$ with
    \begin{align*}
        \Vec{p} = \begin{pmatrix}
            (\xi_1 + i \xi_2)^{\ell-2}(\mu |\xi|^2 + i(\xi_1^2 + \xi_2^2)) + i(\xi_1 + i \xi_2)^\ell\\
            i(\xi_1 + i \xi_2)^{\ell-2}(\mu |\xi|^2 + i (\xi_1^2 + \xi_2^2)) + (\xi_1 + i \xi_2)^{\ell}\\
            2 i (\xi_1 + i \xi_2)^{\ell-1} \xi_3
        \end{pmatrix},
    \end{align*}
    where $\mu = -\frac{2(\ell-1) i}{2\ell - 1}$.
    \item $d=4$: We have $L = (\ell+1)L_1$ with
    \begin{align*}
        \Vec{p} = \begin{pmatrix}
            (\xi_1 + i \xi_3)^\ell\\
            0\\
            i(\xi_1 + i \xi_3)^\ell\\
            0
        \end{pmatrix},
    \end{align*}
    then $L = \ell L_1 + L_2$ with
    \begin{align*}
        \Vec{p} = \begin{pmatrix}
            (\xi_1 + i \xi_3)^{\ell-1}(\xi_2 + i \xi_4)\\
            -(\xi_1 + i \xi_3)^\ell\\
            i(\xi_1 + i \xi_3)^{\ell-1}(\xi_2 + i \xi_4)\\
            - i (\xi_1 + i \xi_3)^\ell
        \end{pmatrix},
    \end{align*}
    then $L = \ell L_1 - L_2$ with 
    \begin{align*}
        \Vec{p} = \begin{pmatrix}
        (\xi_1 + i \xi_3)^{\ell-1}(\xi_2 - i\xi_4)\\
        -(\xi_1 + i \xi_3)^\ell\\
        i(\xi_1 + i \xi_3)^{\ell-1}(\xi_2 - i \xi_4)\\
        i (\xi_1 + i \xi_3)^\ell
    \end{pmatrix}
    \end{align*}
    and $L = (\ell-1)L_1$ with
    \begin{align*}
        \Vec{p} = \begin{pmatrix}
            (\xi_1 + i \xi_3)^{\ell-2}(\mu |\xi|^2 + i (\xi_1^2 + \xi_3^2)) + i (\xi_1 + i \xi_3)^\ell\\
            2 i (\xi_1 + i \xi_3)^{\ell-1}\xi_2\\
            i(\xi_1 + i \xi_3)^{\ell-2}(\mu |\xi|^2 + i(\xi_1^2 + \xi_3^2)) + (\xi_1 + i \xi_3)^\ell\\
            2 i (\xi_1 + i \xi_3)^{\ell-1}\xi_4
        \end{pmatrix},
    \end{align*}
    where $\mu = - \frac{(\ell-1)i}{\ell}$.
    \item $d  \geq 5$: The following choices work for both $d =2n$ even and $d = 2n+1$ odd. We have $L = (\ell+1)L_1$ with
    \begin{align*}
        \Vec{p}_j = \begin{cases}
            (\xi_1 + i \xi_{n+1})^\ell, & j=1\\
            i (\xi_1 + i \xi_{n+1})^\ell, & j=n+1\\
            0, & \text{else}
        \end{cases},
    \end{align*}
    then $L = \ell L_1 + L_2$ with
    \begin{align*}
        \Vec{p}_j = \begin{cases}
            (\xi_1 + i \xi_{n+1})^{\ell-1}(\xi_2 + i \xi_{n+2}),  & j=1\\
             - (\xi_1 + i \xi_{n+1})^\ell, & j=2\\
            i(\xi_1 + i \xi_{n+1})^{\ell-1}(\xi_2 + i \xi_{n+2}),  & j=n+1\\
             - i (\xi_1 + i \xi_{n+1})^\ell, & j=n+2\\
            0, & \text{else}
        \end{cases}
    \end{align*}
    and $L = (\ell-1)L_1$ with
    \begin{align*}
        \Vec{p}_j = \begin{cases}
            (\xi_1 + i\xi_{n+1})^{\ell-2} (\mu |\xi|^2 + i(\xi_1^2 + \xi_{n+1}^2)) + i (\xi_1 + i \xi_{n+1})^\ell, & j=1\\
            i(\xi_1 + i \xi_{n+1})^{\ell-2}(\mu |\xi|^2 + i (\xi_1^2 + \xi_{n+1}^2) ) + (\xi_1 + i \xi_{n+1})^\ell, & j=n+1\\
            2i(\xi_1 + i \xi_{n+1})^{\ell-1}\xi_j, & \text{else}
        \end{cases},
    \end{align*}
    where $\mu = - \frac{2(\ell-1)i}{2\ell + d - 4}$.
\end{itemize}
Note that for the weight $L = (\ell-1) L_1$ when $\ell = 1$, one has a negative power but this cancels with a positive power to yield a polynomial in the end. For example for $d =3$ and $\ell=1$ one has
\begin{align*}
    \Vec{p} = \begin{pmatrix}
        (\xi_1 + i \xi_2)^{-1} i (\xi_1^2 + \xi_2^2) + i(\xi_1 + i \xi_2)\\
        i(\xi_1 + i \xi_2)^{-1} i (\xi_1^2 + \xi_2^2) + \xi_1 + i\xi_2\\
        2i \xi_3
    \end{pmatrix} = \begin{pmatrix}
        i(\xi_1 - i \xi_2) + i(\xi_1 + i \xi_2)\\
         - (\xi_1 - i \xi_2) + \xi_1 + i \xi_2\\
         2 i \xi_3
    \end{pmatrix} = 2i \xi \in \Y_1^3.
\end{align*}
For illustration, we show that $L$ and $\Vec{p}$ satisfy the conditions of \autoref{decomp lemma} for the case $d = 3$ and $\ell = 1$. First of all, we have $L = a_1 L_1$ for $a_1 = 0,1,2$ which evidently satisfy $a_1 \geq 0$, hence $L$ is a dominant integral weight. Next, we recall that $\Tilde{\rho}_1 = \rho_1 \circ \phi^{-1}$, where 
\begin{align*}
    \phi^{-1}(X) = T X T^{-1} = \begin{pmatrix}
        1 & \frac{1}{2} &0\\
        i & -\frac{1}{2} i &0\\
        0 & 0 & 1
    \end{pmatrix}X \begin{pmatrix}
        \frac{1}{2} & - \frac{1}{2}i &0\\
        1 & i &0\\
        0 & 0 &1
    \end{pmatrix}.
\end{align*}
Since $ d = 3 = 2 \cdot 1 + 1$, we have $n = 1$ and hence $\mfk{h} = \text{span} \{H_1\}$ where
\begin{align*}
    H_1 = \begin{pmatrix}
        1 & 0 & 0\\
        0 & -1 & 0\\
        0 & 0 &0
    \end{pmatrix}.
\end{align*}
In particular, the only positive root $L_1 \in \mfk{h}^\ast $ is given simply by $L_1(H_1) = 1$. The root space of this root is spanned by 
\begin{align*}
    X_1 := \begin{pmatrix}
        0 & 0& 1\\
        0 & 0 & 0\\
        0 & -1 & 0
    \end{pmatrix}.
\end{align*}
Now we compute
\begin{align*}
    \Tilde{\rho}_1(H_1) &= i (E_{2 1} - E_{1 2}) + i (\xi_2 \pd_{\xi^1} - \xi_1 \pd_{\xi^2})\\
    \Tilde{\rho}_1(X_1) &= E_{1 3}  - E_{3 1} + i (E_{2 3} - E_{3 2}) + \xi_1 \pd_{\xi^3} - \xi_3 \pd_{\xi^1} + i(\xi_2 \pd_{\xi^3} - \xi_3 \pd_{\xi^2}),
\end{align*}
where the differential operators act diagonally. Now we just insert $\Vec{p}$ in the various cases and compute the relations.
\begin{itemize}
    \item $L= 2 L_1$: Here we have $\Vec{p} = \begin{pmatrix}
        \xi_1 + i \xi_2\\
        i(\xi_1 + i \xi_2)\\
        0
    \end{pmatrix}$ and we compute
    \begin{align*}
        \Tilde{\rho}_1(H_1) \Vec{p} &= \left( \begin{pmatrix}
            0 & -i & 0\\
            i & 0  & 0\\
            0 & 0 & 0
        \end{pmatrix} + i (\xi_2 \pd_{\xi^1} - \xi_1 \pd_{\xi^2}) \right)\begin{pmatrix}
        \xi_1 + i \xi_2\\
        i(\xi_1 + i \xi_2)\\
        0
    \end{pmatrix}\\
    &= \begin{pmatrix}
        \xi_1 + i \xi_2\\
        i(\xi_1 + i \xi_2)\\
        0
    \end{pmatrix} + \begin{pmatrix}
        i(\xi_2 - i\xi_1)\\
        - (\xi_2 - i \xi_1)\\
        0
    \end{pmatrix} = \begin{pmatrix}
        \xi_1 + i \xi_2\\
        i(\xi_1 + i \xi_2)\\
        0
    \end{pmatrix} + \begin{pmatrix}
        \xi_1 + i \xi_2\\
        i(\xi_1 + i \xi_2)\\
        0
    \end{pmatrix} = 2 \Vec{p} = L(H_1) \Vec{p}\\
    \Tilde{\rho}_1(X_1) \Vec{p} &= \left(\begin{pmatrix}
            0 & 0 & 1\\
            0 & 0 & i\\
            -1 & -i & 0
        \end{pmatrix} + \xi_1 \pd_{\xi^3} - \xi_3 \pd_{\xi^1} + i(\xi_2 \pd_{\xi^3} - \xi_3 \pd_{\xi^2})\right)\begin{pmatrix}
        \xi_1 + i \xi_2\\
        i(\xi_1 + i \xi_2)\\
        0
    \end{pmatrix}\\
    &= \begin{pmatrix}
        0\\
        0\\
        - (\xi_1 + i \xi_2) + \xi_1 + i \xi_2
    \end{pmatrix} + \begin{pmatrix}
        - \xi_3\\
        - i \xi_3\\
        0
    \end{pmatrix} + \begin{pmatrix}
        i (-i \xi_3)\\
        - (-i \xi_3)\\
        0
    \end{pmatrix}\\
    &= \begin{pmatrix}
        0\\
        0\\
        0
    \end{pmatrix} + \begin{pmatrix}
        -\xi_3\\
        -i \xi_3\\
        0
    \end{pmatrix} + \begin{pmatrix}
        \xi_3\\
        i \xi_3\\
        0
    \end{pmatrix} = 0.
    \end{align*}
    \item $L=L_1$: Here we have $\Vec{p} = \begin{pmatrix}
        -\xi_3\\
        -i\xi_3\\
        \xi_1 + i \xi_2
    \end{pmatrix}$ and we compute
    \begin{align*}
        \Tilde{\rho}_1(H_1) \Vec{p} &= \left( \begin{pmatrix}
            0 & -i & 0\\
            i & 0  & 0\\
            0 & 0 & 0
        \end{pmatrix} + i (\xi_2 \pd_{\xi^1} - \xi_1 \pd_{\xi^2}) \right)\begin{pmatrix}
            -\xi_3\\
            -i\xi_3\\
            \xi_1 + i \xi_2
        \end{pmatrix}\\
        &= \begin{pmatrix}
            - \xi_3\\
            - i \xi_3\\
            0
        \end{pmatrix} + \begin{pmatrix}
            0\\
            0\\
            i (\xi_2 - i \xi_1 ) 
        \end{pmatrix} = \begin{pmatrix}
            - \xi_3\\
            - i \xi_3\\
            0
        \end{pmatrix} + \begin{pmatrix}
            0\\
            0\\
            \xi_1 + i \xi_2
        \end{pmatrix} = \Vec{p} = L(H_1) \Vec{p}\\
        \Tilde{\rho}_1(X_1) \Vec{p} &= \left(\begin{pmatrix}
            0 & 0 & 1\\
            0 & 0 & i\\
            -1 & -i & 0
        \end{pmatrix} + \xi_1 \pd_{\xi^3} - \xi_3 \pd_{\xi^1} + i(\xi_2 \pd_{\xi^3} - \xi_3 \pd_{\xi^2})\right)\begin{pmatrix}
            -\xi_3\\
            -i\xi_3\\
            \xi_1 + i \xi_2
        \end{pmatrix}\\
        &= \begin{pmatrix}
            \xi_1 + i \xi_2\\
            i \xi_1 - \xi_2\\
            \xi_3 - \xi_3
        \end{pmatrix} + \begin{pmatrix}
            -\xi_1\\
            -i \xi_1\\
            -\xi_3
        \end{pmatrix} + \begin{pmatrix}
            -i \xi_2\\
            \xi_2\\
            \xi_3
        \end{pmatrix} = 0.
    \end{align*}
     \item $L = 0L_1$: We have $\Vec{p} = 2 i \xi = \begin{pmatrix}
        2i \xi_1\\
        2 i \xi_2\\
        2i \xi_3
    \end{pmatrix}$ and we compute
    \begin{align*}
        \Tilde{\rho}_1(H_1) \Vec{p} &= \left( \begin{pmatrix}
            0 & -i & 0\\
            i & 0  & 0\\
            0 & 0 & 0
        \end{pmatrix} + i (\xi_2 \pd_{\xi^1} - \xi_1 \pd_{\xi^2}) \right)\begin{pmatrix}
            2i \xi_1\\
            2i\xi_2\\
            2i\xi_3
        \end{pmatrix} \\
        &=\begin{pmatrix}
            2  \xi_2\\
            -2 \xi_1\\
            0
        \end{pmatrix}  + \begin{pmatrix}
            i (2i \xi_2)\\
            i (-2 i \xi_1)\\
            0
        \end{pmatrix} = \begin{pmatrix}
            2  \xi_2\\
            -2 \xi_1\\
            0
        \end{pmatrix}  + \begin{pmatrix}
            -2 \xi_2\\
            2 \xi_1\\
            0
        \end{pmatrix} = 0 = L(H_1) \Vec{p}\\
        \Tilde{\rho}_1(X_1) \Vec{p} &= \left(\begin{pmatrix}
            0 & 0 & 1\\
            0 & 0 & i\\
            -1 & -i & 0
        \end{pmatrix} + \xi_1 \pd_{\xi^3} - \xi_3 \pd_{\xi^1} + i(\xi_2 \pd_{\xi^3} - \xi_3 \pd_{\xi^2})\right) \begin{pmatrix}
            2i \xi_1\\
            2i\xi_2\\
            2i\xi_3
        \end{pmatrix}\\
        &= \begin{pmatrix}
            2 i \xi_3\\
            -2 \xi_3\\
            -2i \xi_1 + 2 \xi_2
        \end{pmatrix} + \begin{pmatrix}
            -2 i \xi_3\\
            0\\
            2i\xi_1
        \end{pmatrix} + \begin{pmatrix}
            0\\
            i(-2 i \xi_3)\\
            i(2 i \xi_2)
        \end{pmatrix} \\
        &= \begin{pmatrix}
            2 i \xi_3\\
            -2 \xi_3\\
            -2i \xi_1 + 2 \xi_2
        \end{pmatrix} + \begin{pmatrix}
            -2 i \xi_3\\
            0\\
            2i\xi_1
        \end{pmatrix} + \begin{pmatrix}
            0\\
            2  \xi_3\\
            -2 \xi_2
        \end{pmatrix} = 0.
    \end{align*}
\end{itemize}
Hence the conditions of \autoref{decomp lemma} are satisfied in these cases.
\begin{prop}
For $d \geq 3$ and $\ell \geq 1$ the representation $(\Y_\ell^d,\rho_\ell)$ has the following decomposition into irreducible subrepresentations
\begin{align}\label{decomp of Yld}
    \Y_\ell^d \cong \begin{cases}
        \Gamma((\ell+1)L_1)\oplus \Gamma(\ell L_1) \oplus \Gamma((\ell-1)L_1), & d=3\\
        \Gamma((\ell+1)L_1)\oplus \Gamma(\ell L_1 + L_2) \oplus \Gamma(\ell L_1 - L_2) \oplus \Gamma((\ell-1)L_1), &d=4\\
        \Gamma((\ell+1)L_1) \oplus \Gamma(\ell L_1 + L_2) \oplus \Gamma((\ell-1)L_1), & d\geq 5
    \end{cases}.
\end{align}
\end{prop}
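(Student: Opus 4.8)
The plan is to apply \autoref{decomp lemma} to the representation $\Tilde{\rho}_\ell = \rho_\ell \circ \phi^{-1}$ of $\mfk{so}(\Tilde{b})$. For each weight $L$ on the right-hand side of \eqref{decomp of Yld} we take the explicit element of $\Y_\ell^d$ written down above and verify that it is a highest weight vector of weight $L$, i.e.\ that $\Tilde{\rho}_\ell(h)p = L(h)p$ for all $h$ in the Cartan subalgebra $\mfk{h}$ and $\Tilde{\rho}_\ell(x)p = 0$ for every $x$ in a positive root space. Then \autoref{decomp lemma} produces an irreducible subrepresentation of $\Y_\ell^d$ isomorphic to $\Gamma(L)$, and a dimension count shows that these subrepresentations already exhaust $\Y_\ell^d$, which forces the direct sum in \eqref{decomp of Yld}. (The resulting eigenvalues of $K|_{\Y_\ell^d}$ then follow from \eqref{casimir of rho} together with the standard formula for the Casimir eigenvalue on $\Gamma(L)$.)

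First I would check that each listed $p$ really lies in $\Y_\ell^d$. Homogeneity of degree $\ell$ is immediate from the formulas, and harmonicity of each component is precisely the role of the constant $\mu$: the linear form $\xi_1 + i\xi_{n+1}$ is isotropic for the standard form, so all of its powers --- and all products of such powers with the coordinates $\xi_j$, $j \notin \{1, n+1\}$ --- are harmonic, while $\mu$ is chosen so that $\Delta\big((\xi_1 + i\xi_{n+1})^{\ell-2}(\mu|\xi|^2 + i(\xi_1^2 + \xi_{n+1}^2))\big) = 0$, which is a one-line computation using $\Delta|\xi|^2 = 2d$, $\Delta(\xi_1^2 + \xi_{n+1}^2) = 4$ and the product rule for $\Delta$ against a harmonic factor. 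Next I would verify the two highest-weight conditions. Because $\Tilde{\rho}_\ell(X)$ acts by conjugating $X$ through $T$ and then applying the standard matrix action together with the componentwise angular operators $\psi_\ell(F_{jk}) = K_{jk}$, the identities $\Tilde{\rho}_\ell(H_j)p = L(H_j)p$ and $\Tilde{\rho}_\ell(x)p = 0$ for $x$ in the root space of each $L_j - L_k$ ($j < k$), each $L_j + L_k$ ($j < k$), and, for odd $d$, each $L_j$, reduce to explicit but routine polynomial identities in the components of $p$. One also records that in each of the three dimension regimes the weights that appear are pairwise distinct dominant integral weights.

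Finally I would run the dimension count. Since $\mfk{so}(d)$ is semisimple, $\Y_\ell^d$ is completely reducible, and having exhibited a highest weight vector of weight $L$ for each listed $L$ shows that $\Gamma(L)$ occurs in $\Y_\ell^d$ with multiplicity at least one. Now
\[
    \dim \Y_\ell^d = d\left(\binom{d+\ell-1}{\ell} - \binom{d+\ell-3}{\ell-2}\right),
\]
while the Weyl dimension formula gives a closed form for each $\dim \Gamma(L)$; one checks in each of the cases $d = 3$, $d = 4$, $d \geq 5$ that the sum of $\dim \Gamma(L)$ over the listed weights equals $\dim \Y_\ell^d$. Hence there is no room for any further irreducible component and every multiplicity is exactly one, which gives \eqref{decomp of Yld}.

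The step I expect to be the main obstacle is the verification that the third vector --- the candidate highest weight vector for $(\ell-1)L_1$ --- is \emph{simultaneously} harmonic and annihilated by all positive root vectors: the single free parameter $\mu$ has to meet both requirements at once, and carrying out the root-vector annihilation for general $d$ and both parities is the most delicate bookkeeping. An alternative that avoids the explicit vectors is to use $\Y_\ell^d \cong \C^d \otimes \Y_\ell \cong \Gamma(L_1) \otimes \Gamma(\ell L_1)$ and quote the Pieri-type rule for tensoring the standard representation of $\mfk{so}(d)$ with $\Gamma(\ell L_1)$; but a self-contained treatment requires the hands-on computation, which genuinely branches on $d = 3$ (where $\mfk{so}(3)$ admits only the weights $aL_1$), $d = 4$ (where $\mfk{so}(4)$ is not simple and the middle component splits into the two chiral pieces $\ell L_1 \pm L_2$), and $d \geq 5$.
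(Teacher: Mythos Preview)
Your proposal is correct and follows essentially the same approach as the paper: apply \autoref{decomp lemma} to the explicit highest weight vectors listed just before the proposition, observe that the weights are pairwise distinct so the resulting subrepresentations form a direct sum, and then finish with a dimension count via the Weyl dimension formula. The paper's own proof is terser---it does not spell out the harmonicity check or the role of $\mu$ as you do---but the logical structure is identical.
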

\begin{proof}
The fact that these representations appear as subrepresentations follows by applying \\
\autoref{decomp lemma} for the various cases of $(\Vec{p},L)$ listed above. Also, since the weights are different, it is clear that their sum is direct.

It remains to be shown that this sum exhausts $\Y_\ell^d$. This follows since the dimensions of the summands in the decomposition add up to the dimension of $\Y_\ell^d$, which can be checked using Weyl's dimension formula, see \cite[Corollary 24.6]{FulHar91}.

We illustrate this for $d =3$ and $\ell = 1$. Weyl's dimension formula reads
\begin{align*}
    \dim \Gamma(L) = \frac{\prod_{\alpha \in R^+}(L + \rho ,\alpha)}{\prod_{\alpha \in R^+}( \rho ,\alpha)},
\end{align*}
where $\rho := \frac{1}{2} \sum_{\alpha \in R^+}\alpha$ is the half-sum of the positive roots and $(\cdot,\cdot)$ is the inner product on $\mfk{h}^\ast$ induced by the Killing form (see \cite[p.~208]{FulHar91}). In our case we simply have $R^+ = \{L_1\}$ and hence $\rho = \frac{1}{2} L_1$. For $L = a_1 L_1$ we compute
\begin{align*}
    \dim \Gamma(L) &= \frac{\prod_{\alpha \in R^+}(L + \rho ,\alpha)}{\prod_{\alpha \in R^+}( \rho ,\alpha)} = \frac{(a_1 L_1 + \frac{1}{2} L_1,L_1)}{(\frac{1}{2} L_1,L_1)} = \frac{\left(a_1 + \frac{1}{2}\right)}{\frac{1}{2}} \frac{(L_1,L_1)}{(L_1,L_1)} = 2a_1 + 1.
\end{align*}
With this we compute
\begin{align*}
    \dim \Gamma(2 L_1) + \dim \Gamma(L_1) + \dim \Gamma(0 L_1) = 5 + 3 +1 = 9 = 3 \cdot 3 = \dim(\Y_1^3)
\end{align*}
hence the dimensions coincide as claimed.
\end{proof}
We denote by $C_{\Gamma(L)}$ the Casimir operator of $\Gamma(L)$, see \cite[p.~416]{FulHar91} for the definition. This operator just acts by a scalar $c_{\Gamma(L)}$, for which there is a formula available, see \cite[p.~418, Eq.~(25.14)]{FulHar91}. In our cases we have 
\begin{align}\label{eigenvalues of Casimir}
    c_{\Gamma(L)} = \begin{cases}
        \frac{1}{2(d - 2)}(\ell+1)(\ell+d-1), &L = (\ell+1)L_1, d \geq 3\\
        \frac{1}{2(d-2)}(\ell+1)(\ell+d-3), & \begin{cases}
            L = \ell L_1 + L_2, d\geq 4\\
            L = \ell L_1 - L_2, d=4\\
            L = \ell L_1, d=3
        \end{cases}\\
        \frac{1}{2(d-2)}(\ell-1)(\ell+d-3), & L=(\ell-1)L_1, d\geq 3
    \end{cases}.
\end{align}
We again illustrate this for $d =3$ and $\ell =1$. Eq.~$(25.14)$ from \cite[p.~418]{FulHar91} reads
\begin{align*}
    c_{\Gamma(L)} = (L + \rho,L + \rho ) - (\rho,\rho).
\end{align*}
In our case we have $\rho = \frac{1}{2} L_1$ and we compute for $L = a_1 L_1$
\begin{align*}
    c_{\Gamma(L)} &= (L + \rho,L + \rho) - (\rho,\rho) = \left(a_1 L_1 + \frac{1}{2}L_1,a_1 L_1 + \frac{1}{2} L_1\right) - \left(\frac{1}{2}L_1,\frac{1}{2}L_1\right)\\
    &= \left(\left(a_1 + \frac{1}{2}\right)^2 - \frac{1}{4}\right)(L_1,L_1) = a_1(a_1 + 1)(L_1,L_1).
\end{align*}
We have the normalization $(L_1,L_1) = \frac{1}{2}$ and we compute
\begin{align*}
    c_{\Gamma(2L_1)} &= \frac{2 \cdot 3}{2} = 3 \\
    c_{\Gamma(L_1)} &= \frac{1 \cdot 2}{2} = 1\\
    c_{\Gamma(0 L_1)} &= \frac{0 \cdot 1}{2} = 0,
\end{align*}
which are exactly the values as claimed in \eqref{eigenvalues of Casimir}.

Continuing with the general case, we can use this to find the value by which $\sum_{1 \leq j < k \leq d} \rho_\ell(F_{j k})^2$ acts on the irreducible components. One has to be a bit careful with normalization here, since the Killing form $\kappa$ on $\mfk{so}(d)$ is a multiple of the trace form
\begin{align*}
    \kappa(X_1,X_2) = (d-2) \tr(X_1 X_2),
\end{align*}
which yields that the dual basis of $\{F_{j k}\}_{1 \leq j < k \leq d}$ with respect to the Killing form is given by $\{-\frac{1}{2(d-2)}F_{j k}\}_{1 \leq j < k \leq  d}$. With this in mind, we write
\begin{align*}
    \sum_{1 \leq j < k \leq d} \rho_\ell(F_{j k})^2 &= -2(d-2) \sum_{1 \leq j < k \leq d}\rho_\ell \left( -\frac{1}{2(d-2)}F_{j k} \right)\rho_\ell(F_{j k})
\end{align*}
and with the previous we conclude that (with the abuse of notation $\Gamma(L )\subseteq \Y_l^d$)
\begin{align*}
    \left.\sum_{1 \leq j < k \leq d} \rho_\ell(F_{j k})^2\right|_{\Gamma(L)} = -2(d-2) c_{\Gamma(L)} \id_{\Gamma(L)}.
\end{align*}
Finally, inserting this into Eq.~\eqref{casimir of rho} yields
\begin{align*}
    K|_{\Gamma(L)} = \begin{cases}
        -\ell \cdot\id_{\Gamma(L)}, & L = (\ell+1)L_1,d\geq 3\\
        1 \cdot\id_{\Gamma(L)}, & \begin{cases}
            L = \ell L_1 + L_2, d \geq 4\\
            L = \ell L_1 - L_2,d =4\\
            L = \ell L_1, d=3
        \end{cases}\\
        (\ell+d-2)\cdot \id_{\Gamma(L)}, & L = (\ell-1)L_1, d \geq 3
    \end{cases}
\end{align*}
and we conclude that $K|_{\Y_\ell^d}$ has exactly the eigenvalues $-\ell,1,\ell+d-2$.

Note that this holds for all $\ell \geq 1$ and $d \geq 3$. It is somewhat surprising that this result is in the end uniform in $d$. From the perspective of Lie algebra theory one would expect some special cases. Indeed, in our derivation of these eigenvalues, we used the decomposition \eqref{decomp of Yld} which has the special cases $d=3,4$. It then seems to be a ``coincidence'' that the Casimir operator (and hence also $K|_{\Y_\ell^d}$ by Eq.~\eqref{casimir of rho}) has exactly three different eigenvalues, see \eqref{eigenvalues of Casimir}.

This leads us to believe that there probably exists some way of obtaining these eigenvalues without using Lie algebra theory.
\section{Additional data files}
\label{Additional data files}
\noindent Since the polynomials appearing in the proof of \autoref{bounds on imag axis} are quite large, we find it sensible to attach them as the following csv-files
\begin{align*}
    {\tt 1\_min\_4.csv, 1\_min\_geq5.csv, 1\_1\_4.csv, 1\_1\_geq5.csv, 1\_pl\_4.csv, 1\_pl\_geq5.csv}\\
    {\tt 2\_min\_4.csv, 2\_min\_geq5.csv, 2\_1\_geq4.csv, 2\_pl\_4.csv, 2\_pl\_geq5.csv }\\
    {\tt geq3\_min\_4.csv, geq3\_min\_5.csv, geq3\_min\_6.csv, geq3\_1\_4.csv, geq3\_1\_5.csv, geq3\_1\_geq6.csv }\\
    {\tt geq3\_pl\_4.csv, geq3\_pl\_5.csv, geq3\_pl\_geq6.csv }.
\end{align*}
These are named as {\tt l\_m\_d.csv} where {\tt l} describes the range of $\ell$ that is considered in that file. Analogously {\tt d} describes the range of $d $ considered. Finally {\tt m} has the values min,1,pl which correspond to the values $-\ell,1,\ell+d-2$ of $m$, respectively. The content of each file are two columns with the following entries.
\renewcommand{\arraystretch}{1.2}
\begin{longtable}{|l|l|}

\caption{Description of the content of the data files}\\
\hline Variable name & Description  \\ \hline 

    {\tt A} & \makecell[l]{Explicit expression of the rational function $A_{n,d,\ell,m}(\lambda)$ as defined in \eqref{An special cases}\\ 
    and \eqref{An general case}.} \\
    {\tt B} & Explicit expression of the rational function $B_{n,d,\ell,m}(\lambda)$ as defined in \eqref{Bn all cases}. \\
    {\tt N} & $N(d,\ell,m)$ as defined in \eqref{def N}.\\
    {\tt d\_0} & If a range of $d$ is considered, then this is the starting value.\\
    {\tt l\_0} & If a range of $\ell$ is considered, then this is the starting value.\\
    {\tt r\_N} & Explicit expression of the rational function $r_{N(d,\ell,m),d,\ell,m}(\lambda)$.\\
    {\tt rtilde} & Explicit expression of the rational function $\Tilde{r}_{n,d,\ell,m}(\lambda)$.\\
    {\tt delta\_N} & Explicit expression of the rational function $\delta_{N(d,\ell,m),d,\ell,m}(\lambda)$.\\
    {\tt C} & Explicit expression of the rational function $C_{n,d,\ell,m}(\lambda)$.\\
    {\tt epsilon} & Explicit expression of the rational function $\eps_{n,d,\ell,m}(\lambda)$.\\ 
    {\tt bounddelta} & \makecell[l]{This is the polynomial $a^2|Q(it)|^2 - b^2|P(it)|^2$ as described in the proof of \\
    \autoref{bounds on imag axis}. It is a polynomial in $t^2$ and, depending on the case, also in \\
    $d$ and/or $\ell$ with integer coefficients. If $d$ and/or $\ell$ appear, they are shifted\\ by $d_0$ and $\ell_0$, respectively. }\\
    {\tt boundC} & \makecell[l]{This is the polynomial analogous to {\tt bounddelta}, but with $n$ as an\\
    additional variable. Here $n$ is shifted by $N(d,\ell,m)$.}\\
    {\tt boundepsilon} & This is the polynomial analogous to {\tt boundC}.
    \\ \hline
    
\end{longtable}

\printbibliography
\end{document}